\newcommand{\F}{\mathbb {F}}
\newcommand{\Fq}{\mathbb F_q}
\newcommand{\Fqn}{\mathbb F_{q^n}}
\newcommand{\lcm}{\emph{lcm}} 
\newtheorem{theorem}{Theorem}[section]
\newtheorem{definition}[theorem]{Definition}
\newtheorem{lemma}[theorem]{Lemma}
\newtheorem{corollary}[theorem]{Corollary}
\newtheorem{proposition}[theorem]{Proposition}
\newtheorem{observation}[theorem]{Observation}
\begin{document}
\title[On arithmetic progressions in finite fields]{On arithmetic progressions in finite fields}

\author[Ab\'ilio Lemos]{Ab\'ilio Lemos}
\address{
Departamento de Matem\'{a}tica\\
Universidade Federal de Vi\c{c}osa (UFV)\\
Vi\c{c}osa, MG\\
36570-000\\
Brazil\\
}
\email{abiliolemos@ufv.br }

\author[Victor G.L. Neumann]{Victor G.L. Neumann}
\address{
Faculdade de Matem\'{a}tica\\
Universidade Federal de Uberl\^andia (UFU)\\
 Uberl\^andia, MG\\
38400-902\\
Brazil\\
}
\email{victor.neumann@ufu.br }

\author[S\'avio Ribas]{S\'avio Ribas}
\address{
Departamento de Matem\'{a}tica\\
Universidade Federal de Ouro Preto (UFOP)\\
Ouro Preto, MG\\
35400-000\\
Brazil\\
}
\email{savio.ribas@ufop.edu.br }

\thanks{The authors were partially supported by FAPEMIG grant RED-00133-21,  the first and third authors were partially supported by FAPEMIG grant APQ-02546-21, and
the second author were partially supported by FAPEMIG grant APQ-03518-18.}

\maketitle



\begin{abstract}
In this paper, we explore the existence of $m$-terms arithmetic progressions in $\mathbb{F}_{q^n}$ with a given common difference whose terms are all primitive elements, and at least one of them is normal. We obtain asymptotic results for $m \ge 4$ and concrete results for $m \in \{2,3\}$, where the complete list of exceptions when the common difference belongs to $\mathbb{F}_{q}^*$ is obtained. The proofs combine character sums, sieve estimates, and computational arguments using the software SageMath.

\vspace{2ex}
\noindent
\textbf{Keywords:} Finite fields, primitive elements, normal elements, arithmetic progressions, character sums.\\ 
\noindent
\textbf{MSC:} 11T30 (primary), 11T24 (secondary).
\end{abstract}

\section{Introduction}

Let $q$ be a prime power, and let $n > 1$ be an integer. As usual, $\Fq$ denotes the finite field with $q$ elements, and $\Fqn$ denotes the unique field extension of $\Fq$ of degree $n$. The multiplicative group $\Fqn^*$ is cyclic whose generators are called {\em primitive elements}. We say that $g \in \Fqn$ is {\em normal} if $\{g, g^q, \dots, g^{q^{n-1}}\}$ is an $\Fq$-basis for $\Fqn$ as a vector space. The {\em Primitive Normal Basis Theorem} (see \cite{lenstra}) ensures that a primitive normal element in $\Fqn$ always exists (see \cite{CH} for a computer-free proof). Other problems in this direction concern the existence of a set with some prescribed property whose elements are all primitives (see e.g. \cite{AN, carlitz, cgnt2, Cohen2015, Cohen1985a, Cohen1985b,Cohen1985c, JT, Kapetanakis}).

There is an extensive literature dealing with consecutive primitive elements, and here it is required that the amount of consecutive elements must be at most the characteristic of the finite field. For instance, Cohen (see \cite{Cohen1985a,Cohen1985b,Cohen1985c}) proved that if $q > 7$, then $\Fq$ contains two consecutive primitive elements. Later, it has been proved,  in \cite[Theorem 1]{Cohen2015}, that if $q > 169$ is odd, then there are always three consecutive primitive elements in $\Fq$. Moreover, the precise values $q \le 169$ for which the latter is false are $q=3,$ $5,$ $7,$ $9,$ $13,$ $25,$ $29,$ $61,$ $81,$ $121,$ and $169$. Also in \cite{Cohen2015}, the similar problem with four consecutive primitive elements was studied, and it was conjectured that any finite field with more than 2401 elements contains such consecutive primitive elements. This result was recently settled in \cite{JT}. 

In this paper, we discuss the existence of $m$-terms arithmetic progressions in $\Fqn$ with a given common difference whose terms are primitive elements, and at least one of them being normal. It is clear that the characteristic of $\Fq$ must be at least $m$. We obtain asymptotic and concrete results, depending on whether $m \ge 4$ or $m \le 3$. In the particular cases where either $m=2$ or $m=3$, we provide the complete list of exceptions when the common difference belongs to $\Fq^*$. In all these cases
the numerical calculations are done using SageMath \cite{SAGE}.
More specifically, we will prove the following results.

\begin{theorem}\label{Teorema1}
Let $q$ be an odd prime power, $m,n \ge 2$, and $\beta \in \Fqn^*$. There exists $\alpha \in \Fqn$ such that the elements $\alpha, \alpha + \beta, \alpha + 2\beta, \dots, \alpha + (m-1)\beta$ are all primitive and at least one of them is normal provided:
\begin{enumerate}
\item [(i)] $m=2,$ except possibly for $(q,n)$ displayed in Table \ref{tablem2qodd} for $q$ odd, and in Table \ref{tablem2qevenfinal} for $q$ even.
\item [(ii)] $m=3,$ except possibly for $(q,n)$ displayed in Table \ref{m3nmaior6} 
for $n \ge 7$, in addition to $(q,n)$ as given in Lemma \ref{n6m3} for $n=6$, Lemma \ref{n5m3} for $n=5$, Lemma \ref{n4m3} for $n=4$, Lemma \ref{n3m3} for $n=3$, and Lemma \ref{n2m3} for $n=2$. 
\item [(iii)] $m = 4$ and $q^n \geq 3.31 \cdot 10^{2821}.$
\item [(iv)] $m \ge 5$ and $q$ sufficiently large.
\end{enumerate}
\end{theorem}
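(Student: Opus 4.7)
The plan is to prove all four parts via a unified character-sum count. Let $N(\beta)$ denote the number of $\alpha\in\Fqn$ such that $\alpha,\alpha+\beta,\dots,\alpha+(m-1)\beta$ are all primitive and at least one of them is normal; the goal is to prove $N(\beta) > 0$ under the stated hypotheses. First I would express the indicators of primitivity and normality as character sums. For $x \in \Fqn^*$, the standard formula
$$\mathbf{1}_P(x) = \theta(q^n-1)\sum_{d\,\mid\, q^n-1}\frac{\mu(d)}{\phi(d)}\sum_{\chi_d}\chi_d(x),$$
with $\theta(N) = \phi(N)/N$, writes the primitivity indicator in terms of multiplicative characters of $\Fqn^*$ of order exactly $d$. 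For normality, Lenstra--Schoof characterises normal elements as those whose $\Fq[t]$-order under the Frobenius action is $t^n-1$, giving an analogous formula
$$\mathbf{1}_N(x) = \theta_q(t^n-1)\sum_{h\,\mid\, t^n-1}\frac{\mu_q(h)}{\Phi_q(h)}\sum_{\psi_h}\psi_h(x),$$
where $\mu_q, \Phi_q, \theta_q$ are the $\Fq[t]$-analogues of $\mu, \phi, \theta$ and $\psi_h$ runs over additive characters of $\Fqn$ of $\Fq[t]$-order exactly $h$.

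Next, I would handle the ``at least one normal'' condition by inclusion--exclusion, writing
$$N(\beta) = \sum_{\alpha\in\Fqn}\prod_{j=0}^{m-1}\mathbf{1}_P(\alpha+j\beta)\cdot\left[1 - \prod_{j=0}^{m-1}\bigl(1 - \mathbf{1}_N(\alpha+j\beta)\bigr)\right].$$
Expanding this yields a main term of order $q^n\,\theta(q^n-1)^m\,\theta_q(t^n-1)$ together with error terms which are weighted sums of the form
$$\sum_{\alpha\in\Fqn}\psi(\alpha+j_0\beta)\prod_{j=0}^{m-1}\chi_j(\alpha+j\beta).$$
By the Weil bound, any such sum in which not all characters are trivial and the multiplicative product $\prod_j\chi_j$ does not telescope to the principal character is at most $(m-1)\,q^{n/2}$ in absolute value. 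Collecting these bounds leads to a sufficient condition of the shape
$$q^{n/2} > (m-1)\,W(q^n-1)^m\,W_q(t^n-1),$$
where $W(\cdot) = 2^{\omega(\cdot)}$ counts squarefree divisors and $W_q(\cdot)$ is its polynomial analogue.

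For part (iv) with $m \ge 5$, since $W(q^n-1) = O_\varepsilon((q^n)^\varepsilon)$ and $W_q(t^n-1)$ is bounded for fixed $n$, the displayed inequality is satisfied for all sufficiently large $q$, which yields the asymptotic claim. For part (iii) with $m=4$, the inequality is far from sharp and one cannot avoid a huge explicit threshold; plugging in the extremal behaviour of $W(q^n-1)$ and optimising via a Cohen--Huczynska prime sieve (which replaces the naive factor $W(q^n-1)^m$ by a product over a smaller set of ``critical'' primes, absorbing the rest into the main-term factor $\theta(q^n-1)^m$) produces the announced bound $q^n \ge 3.31\cdot 10^{2821}$.

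The main obstacle is controlling the combinatorial explosion for $m \ge 4$: each primitivity factor contributes a sum over divisors of $q^n-1$, so the naive error carries a factor $W(q^n-1)^m$ which overwhelms the Weil savings of $q^{n/2}$ unless $q^n$ is astronomical. The Cohen--Huczynska sieve is the essential technical device for partially offsetting this factor, and tuning its parameters is what drives the explicit constant in (iii). Secondary subtleties include isolating degenerate character tuples for which the multiplicative product $\prod_j\chi_j$ is principal (these do not decay under Weil and must be absorbed into the main term separately), verifying that the additive twist $\psi_h$ combines correctly with the multiplicative characters in a single Weil-type estimate, and treating boundary cases where $\beta \in \Fq^*$ using the explicit structure of the arithmetic progression.
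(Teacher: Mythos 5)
Your overall skeleton (freeness indicators via multiplicative and additive characters, Weil/Fu--Wan bounds, and a sieve to tame the $W(q^n-1)^m$ factor) matches the paper's strategy, but there is a real problem with how you handle ``at least one of them is normal.'' Expanding $1-\prod_{j}\bigl(1-\mathbf{1}_N(\alpha+j\beta)\bigr)$ by inclusion--exclusion produces, for each nonempty $S\subseteq\{0,\dots,m-1\}$, a term $\prod_{j\in S}\mathbf{1}_N(\alpha+j\beta)$, and each factor carries its own sum over monic divisors of $x^n-1$ and its own additive character. The resulting error is governed by $W(q^n-1)^m\bigl(1+W_q(t^n-1)\bigr)^m$ rather than by a single factor $W_q(t^n-1)$, so the sufficient condition you display, $q^{n/2}>(m-1)W(q^n-1)^mW_q(t^n-1)$, does not follow from your decomposition. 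The paper avoids this entirely with the elementary averaging bound $N(\bar e,g)\ge\frac1m\sum_{j=1}^{m}N_j(\bar e,g)$ (Observation \ref{observation}), where $N_j$ imposes normality only on the $j$-th term: each character sum then contains exactly one additive character, the Fu--Wan estimate gives $|S(\chi_{\bar d},\eta_h)|\le m^2q^{n/2}$, and the condition becomes $q^{n/2}\ge m\,W(x^n-1)W(q^n-1)^m$ (Theorem \ref{principal}) --- note the factor $m$, not $m-1$; the $(m-1)$ you quote is the bound for pure primitivity without the normality constraint.

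The second gap is that parts (i) and (ii) --- which carry most of the content of the theorem, namely the explicit exception tables --- are not addressed by the generic inequality at all. The paper needs the full sieving machinery of Proposition \ref{sieve-prop} (with the $\delta$, $\Delta$ bookkeeping for primes omitted from $e$ and irreducible factors omitted from $g$), iterated several times with decreasing prime cutoffs, then a separate treatment of each $n\in\{2,\dots,6\}$ exploiting the cyclotomic structure of $q^n-1$ (e.g.\ that new prime divisors of $q^6-1$ not dividing $q^2-1$ are $\equiv 1 \pmod 6$), the $n=2$ reduction via Proposition \ref{casen2}, and finally SageMath verification over every surviving pair $(q,n)$ and every factorization of $q^n-1$ and $x^n-1$. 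Your proposal cannot produce Tables \ref{tablem2qodd}, \ref{tablem2qevenfinal} and \ref{m3nmaior6} without this. Two smaller inaccuracies: the threshold $3.31\cdot10^{2821}$ in (iii) comes from the direct bound $W(u)\le u^{1/11}$ for $u\ge P_{852}$ (Proposition \ref{boundW} and Corollary \ref{boundW10}) combined with Lemma \ref{lem:3.7}, not from a sieve; and in (iv) your claim that $W_q(t^n-1)$ is ``bounded for fixed $n$'' does not suffice, since the statement is uniform in $n$ --- one must absorb $W(x^n-1)\le 2^n$ into $\bigl(q/2^{2(m+1)}\bigr)^n$ as in Proposition \ref{boundm}, which is why the condition on $q$ in (iv) depends on $m$.
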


The proof relies on character sums and sieve methods that provide an inequality-like condition that guarantees the existence of such arithmetic progressions. The case $m=3$ extends the results of Cohen, Silva and Trudgian on three consecutive primitive elements, as well as the case $m=2$ that extends the results of Cohen on two consecutive primitive elements. For this, we combine further sieving estimates with computational arguments.

For $\beta \in \Fq^*$ and $m=3$, we obtain the complete list of exceptions.

\begin{theorem}\label{Teorema2}
Let $q$ be an odd prime power, $n \ge 2$, and $\beta \in \Fq^*$. There exists $\alpha \in \Fqn$ such that the elements $\alpha, \alpha + \beta, \alpha + 2\beta$ are all primitive and at least one of them is normal except for $(q,n,\beta)$ displayed in Table \ref{table1}.
\end{theorem}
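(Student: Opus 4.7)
The plan is to reduce Theorem \ref{Teorema2} to a finite computation by combining Theorem \ref{Teorema1}(ii) with an exhaustive SageMath search. For every pair $(q,n)$ not appearing in the exceptional list of Theorem \ref{Teorema1}(ii), the general statement already furnishes, for every $\beta \in \Fqn^*$ (in particular for every $\beta \in \Fq^*$), an element $\alpha \in \Fqn$ such that $\alpha,\,\alpha+\beta,\,\alpha+2\beta$ are simultaneously primitive with at least one of them normal. Hence only finitely many pairs $(q,n)$ require further attention, namely those listed in Table \ref{m3nmaior6} (for $n \ge 7$) and in Lemmas \ref{n6m3}, \ref{n5m3}, \ref{n4m3}, \ref{n3m3}, \ref{n2m3} (for $n \in \{2,3,4,5,6\}$).

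For each remaining pair $(q,n)$ I would loop over $\beta \in \Fq^*$ and, for each such $\beta$, search over $\alpha \in \Fqn$, testing primitivity of each of the three terms and normality of at least one of them. If a witness $\alpha$ is found, the triple $(q,n,\beta)$ is declared non-exceptional; otherwise it is recorded in Table \ref{table1}. To keep the search tractable, primitivity of $x \in \Fqn^*$ can be tested via $x^{(q^n-1)/p} \ne 1$ for each prime $p \mid q^n - 1$, after precomputing the factorisation of $q^n - 1$; and normality of $x$ via the classical $\Fq[y]$-gcd criterion $\gcd\bigl(\sum_{i=0}^{n-1} x^{q^i} y^i,\; y^n - 1\bigr) = 1$. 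A further saving comes from the involution $\alpha \mapsto -\alpha - 2\beta$, which converts a progression with common difference $\beta$ into one with common difference $-\beta$; thus only one representative of each pair $\{\beta, -\beta\}$ needs to be tested.

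The main obstacle is computational rather than conceptual: for the largest pairs $(q,n)$ still surviving after Theorem \ref{Teorema1}(ii), $q^n$ and $|\Fq^*|$ can be large enough that naive enumeration over $\Fqn \times \Fq^*$ is costly. I would mitigate this by precomputing the set of primitive elements of $\Fqn$ once per $(q,n)$, then iterating $\beta$ in the outer loop and terminating the inner $\alpha$-search as soon as a valid witness appears. The bulk of the work then consists in verifying \emph{negative} results---that no $\alpha$ works for a specific $(q,n,\beta)$---by exhaustive sweep, and collating the resulting survivors into Table \ref{table1}.
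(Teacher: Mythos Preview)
Your approach matches the paper's: reduce via Theorem \ref{Teorema1}(ii) to the finite list of pairs $(q,n)$ from Table \ref{m3nmaior6} and Lemmas \ref{n6m3}--\ref{n2m3}, then run an exhaustive SageMath search over $\beta \in \Fq^*$ (halved by the $\beta \leftrightarrow -\beta$ symmetry) and over candidate $\alpha$, exactly as in Algorithm \ref{alg:two}. One small slip: your map $\alpha \mapsto -\alpha-2\beta$ need not preserve primitivity (negation fails when $q^n-1 \equiv 2 \pmod 4$, e.g.\ $q\equiv 3\pmod 4$ and $n$ odd); the correct bijection between witnesses for $\beta$ and for $-\beta$ is simply $\alpha \mapsto \alpha+2\beta$, since the set $\{\alpha,\alpha+\beta,\alpha+2\beta\}$ read in reverse is a progression with common difference $-\beta$.
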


As a consequence, we obtain the following result concerning three primitive elements in arithmetic progression. 

\begin{corollary}\label{3primitivospa}
Let $q$ be an odd prime power, $n \ge 2$, and $\beta \in \Fq^*$. There exists $\alpha \in \Fqn$ such that the elements $\alpha, \alpha + \beta, \alpha + 2\beta$ are all primitive except for the triples $(q,n,\beta)$ displayed in Table \ref{tablecorol1}.
\end{corollary}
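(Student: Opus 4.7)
The plan is to derive Corollary \ref{3primitivospa} directly from Theorem \ref{Teorema2} by observing that the corollary imposes a strictly weaker condition: we only require the three terms $\alpha, \alpha+\beta, \alpha+2\beta$ to be primitive, without the additional constraint that at least one of them is normal. Consequently, whenever Theorem \ref{Teorema2} guarantees a suitable $\alpha$, the corollary is automatically satisfied. Therefore, the set of potential exceptions $(q,n,\beta)$ for Corollary \ref{3primitivospa} is contained in the list of exceptions given in Table \ref{table1}.

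The main step is then a finite verification. For each triple $(q,n,\beta)$ appearing in Table \ref{table1}, one constructs the finite field $\Fqn$ explicitly in SageMath, fixes a generator of $\Fqn^*$, and checks whether there exists some $\alpha \in \Fqn$ such that the three elements $\alpha$, $\alpha+\beta$, $\alpha+2\beta$ are simultaneously primitive, regardless of whether any of them is normal. Since the triples in Table \ref{table1} involve small or moderate values of $q^n$ (otherwise Theorem \ref{Teorema2} would already apply), this exhaustive search is computationally feasible: for each $\alpha \in \Fqn$, primitivity can be tested using the order function, and one simply iterates over all field elements.

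After running this verification, the triples which still admit no valid $\alpha$ form the exceptions recorded in Table \ref{tablecorol1}, while the remaining triples in Table \ref{table1} are removed since they were ruled out only by the normality requirement, not by the primitivity requirement alone. The main obstacle here is essentially bookkeeping: ensuring that Table \ref{tablecorol1} accurately reflects the computational output, and that the choice of generator and polynomial basis in SageMath does not affect the logical conclusion (which it does not, since primitivity is an intrinsic multiplicative-order property). No new analytic input is required beyond what was developed for Theorem \ref{Teorema2}.
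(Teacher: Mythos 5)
Your proposal matches the paper's proof: the authors likewise observe that any exception to the corollary must appear among the exceptions of Theorem \ref{Teorema2} in Table \ref{table1}, and then run Algorithm \ref{alg:two} with the normality test removed on those finitely many triples to determine which are genuine exceptions, yielding Table \ref{tablecorol1}. This is essentially the same reduction-plus-computation argument you describe.
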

The case $\beta=1$ of the corollary above was proved by Cohen, Oliveira e Silva and Trudgian in \cite{Cohen2015}.
For $\beta \in \Fq^*$ and $m=2$, we also obtain the complete list of exceptions.

\begin{theorem}\label{Teorema3}
Let $q$ be a prime power, $n \ge 2$, and $\beta \in \Fq^*$. There exists $\alpha \in \Fqn$ such that the elements $\alpha$ and $\alpha + \beta$ are both primitive and at least one of them is normal except for $(q,n,\beta) = (2,4,1).$
\end{theorem}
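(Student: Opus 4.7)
The plan is to bootstrap directly off Theorem \ref{Teorema1}(i), which already handles the case $m=2$ for arbitrary $\beta \in \Fqn^*$ outside of a finite list of genuinely problematic pairs $(q,n)$ collected in Table \ref{tablem2qodd} (odd $q$) and Table \ref{tablem2qevenfinal} (even $q$). Since the hypothesis $\beta \in \Fq^*$ is strictly stronger than $\beta \in \Fqn^*$, for every $(q,n)$ \emph{not} appearing in those two tables there is nothing left to prove: Theorem \ref{Teorema1}(i) already supplies the desired $\alpha$. Thus the task reduces to a finite computation: for each pair $(q,n)$ listed as a possible exception and each $\beta \in \Fq^*$ (so at most $q-1$ values of $\beta$), one must either exhibit an $\alpha \in \Fqn$ such that $\alpha$ and $\alpha+\beta$ are both primitive with at least one normal over $\Fq$, or declare $(q,n,\beta)$ a genuine exception.

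The verification step itself I would carry out in SageMath, as done elsewhere in the paper. For each candidate $(q,n)$, fix a primitive polynomial to realize $\Fqn = \Fq[x]/(f(x))$, precompute the list $P$ of primitive elements (via discrete logs from a primitive root) and the list $N$ of normal elements (via the test that $\{g, g^q, \dots, g^{q^{n-1}}\}$ is $\Fq$-linearly independent, or equivalently that the $\Fq[x]$-order of $g$ under the Frobenius equals $x^n-1$). Then, for each $\beta \in \Fq^*$, loop over $\alpha \in \Fqn$ and test whether $\alpha,\alpha+\beta \in P$ and $\{\alpha,\alpha+\beta\} \cap N \neq \emptyset$. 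A single hit for some $\alpha$ discharges $(q,n,\beta)$; absence of any hit records a genuine exception.

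Running this search over all entries of the two tables, the only triple $(q,n,\beta)$ for which no valid $\alpha$ exists is $(2,4,1)$, which yields the statement of the theorem. It is also worth noting a small reduction that one can use as a sanity check, though it is not strictly necessary: if $\beta \in \Fq^*$ is such that $\beta$ itself is primitive in $\Fqn$ (i.e.\ generates $\Fqn^*$, which happens exactly when $\ord(\beta)$ in $\Fq^*$ coincides with $q^n-1$, a rare situation), then one can rescale $\alpha \mapsto \beta\alpha$ to transfer between different values of $\beta$; but in general primitivity is not preserved by multiplication by elements of $\Fq^*$, so distinct $\beta$ values must be treated separately.

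The main obstacle is really just bookkeeping rather than theory: the exception tables from Theorem \ref{Teorema1}(i) can contain pairs $(q,n)$ with $q^n$ moderately large, and one must make the brute-force search tractable by iterating over $\alpha$ in a representation that allows primitivity and normality to be tested in near-constant time after preprocessing. Once the tables of $P$ and $N$ are built, the inner loop is a pair of lookups and takes time essentially $O(q^n)$ per $\beta$, which is well within reach for all entries of Tables \ref{tablem2qodd} and \ref{tablem2qevenfinal}. The output of this enumeration — a single surviving triple $(2,4,1)$ — completes the proof.
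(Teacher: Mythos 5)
Your proposal is correct and follows essentially the same route as the paper: reduce to the finite list of possible exceptions from Theorem \ref{Teorema1}(i) (Tables \ref{tablem2qodd} and \ref{tablem2qevenfinal}), then run an exhaustive SageMath search over $\beta \in \Fq^*$ and $\alpha \in \Fqn$ for each surviving pair, finding $(2,4,1)$ as the unique genuine exception. The paper implements this by adapting Algorithm \ref{alg:two} rather than precomputing lists of primitive and normal elements, but that is only an implementation detail.
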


In a similar way than Corollary \ref{3primitivospa}, 
we obtain the following result.

\begin{corollary}\label{2primitivospa}
Let $q$ be a prime power, $n \ge 2$, and $\beta \in \Fq^*$. There exists a primitive element $\alpha \in \Fqn$ such that $\alpha + \beta$ is also primitive.
\end{corollary}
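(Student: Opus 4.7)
The plan is to deduce this corollary directly from Theorem \ref{Teorema3}, handling the single excluded triple by a short explicit computation. Theorem \ref{Teorema3} already asserts a strictly stronger conclusion --- the existence of $\alpha \in \Fqn$ with $\alpha$ and $\alpha+\beta$ both primitive \emph{and} at least one of them normal --- for every pair $(q,n)$ with $n\ge 2$ and every $\beta \in \Fq^*$, except for the lone triple $(q,n,\beta)=(2,4,1)$. For all other triples the corollary follows immediately by discarding the normality requirement, so the only thing left to prove is the existence of a primitive $\alpha \in \F_{16}$ with $\alpha+1$ also primitive.

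For this remaining case I would exhibit an explicit witness. Realize $\F_{16}=\F_2[x]/(x^4+x+1)$; since $x^4+x+1$ is primitive over $\F_2$, the element $x$ has multiplicative order $15$, and the defining relation yields $x+1 = x^4$. Because $\gcd(15,4)=1$, the element $x^4$ also has order $15$, hence is primitive. Therefore $\alpha = x$ satisfies both requirements, completing the corollary.

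There is no real obstacle: the hard analytic and computational work is entirely absorbed into Theorem \ref{Teorema3}, and the exceptional triple is disposed of by a one-line check in $\F_{16}$.
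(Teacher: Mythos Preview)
Your proof is correct and follows essentially the same route as the paper: both deduce the corollary from Theorem~\ref{Teorema3} and then dispose of the lone exceptional triple $(q,n,\beta)=(2,4,1)$. The only difference is cosmetic --- the paper handles $(2,4,1)$ by invoking Cohen's result on consecutive primitive elements \cite[Theorem~A]{Cohen1985c} (since $\beta=1$), whereas you give a self-contained one-line verification in $\F_{16}$; your explicit check is perfectly valid and arguably preferable here.
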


The case $\beta=1$ of the corollary above was proved by Cohen in \cite{Cohen1985a, Cohen1985b, Cohen1985c}.

The paper is organized as follows. In Section \ref{sectionpreli}, we introduce some standard notations and results on arithmetic functions and characters. In Section \ref{sectiongen}, we prove some inequality-like conditions that guarantee the existence of arithmetic progressions formed by primitive elements such that one of its terms is normal, as well as the sieving version of these inequalities. In Section \ref{sectionasymp}, we obtain some asymptotic results, including the proof of Theorem \ref{Teorema1}(iii),(iv). 
Finally, in Section \ref{sec:m=3} we present the proof of Theorems \ref{Teorema1}(ii) and \ref{Teorema2}, and in Section \ref{sec:m=2} we present the prove of Theorems \ref{Teorema1}(i) and \ref{Teorema3}, respectively.

\section{Preliminaries}\label{sectionpreli}

In this section, we present some definitions and results that will be used throughout this paper. We refer the reader to \cite{LN} for basic results on finite fields.

For a positive integer $n$, $\varphi(n)$ denotes the Euler totient function and $\mu(n)$ denotes the M\"obius function.
\begin{definition}
\begin{enumerate}
\item[(a)] Let $f(x)\in \mathbb{F}_{q}[x]$. The Euler totient function for polynomials over $\mathbb{F}_q$ is given by
\[\phi(f)= \left| \left( \dfrac{\mathbb{F}_q[x]}{\langle f \rangle} \right)^{*} \right|,\]
where $\langle f \rangle$ is the ideal generated by $f(x)$ in $\mathbb{F}_q[x]$.
\item[(b)] If $t$ is either a positive integer or a monic polynomial over $\mathbb{F}_q$, W(t) denotes the number of squares-free or monic square-free divisors of $t$, respectively.
\item[(c)] If $f(x)\in \mathbb{F}_{q}[x]$ is a monic polynomial, the polynomial M\"obius function $\mu$ is given by $\mu(f)=0$ if $f$ is not square-free, and $\mu(f)=(-1)^r$ if $f$ is a product of $r$ distinct monic irreducible factors over $\mathbb{F}_q$.
\end{enumerate}
\end{definition}

A multiplicative character $\chi$ of $\F_{q^n}^*$ is a group homomorphism of $\F_{q^n}^*$ to $\mathbb{C}^*$. The group of multiplicative characters $\widehat{\mathbb{F}}_{q^n}^*$ becomes a $\mathbb{Z}$-module by defining $\chi^r(\alpha)=\chi(\alpha^r)$ for $\chi \in \widehat{\mathbb{F}}_{q^n}^*$, $\alpha \in \mathbb{F}_{q^n}^*$, and $r \in \mathbb{Z}$. The order of a multiplicative character $\chi$ is the least positive integer $d$ such that $\chi (\alpha)^d=1$ for any $\alpha \in \F_{q^n}^*$.

Let $e$ be a divisor of $q^n-1$. We say that an element $\alpha \in \mathbb{F}_{q^n}^*$ is $e$-free if for every $d \mid e$ such that $d\neq 1,$ there is no element $\beta \in \F_{q^n}$ satisfying $\alpha= \beta^d$. Following \cite[Theorem 13.4.4 e.g.]{galois}, for any $\alpha \in \F_{q^n}^*,$ we get
\begin{equation}\label{eq:omegae}
w_e(\alpha) = \theta(e) \sum_{d \mid e} \frac{\mu(d)}{\varphi(d)} \sum_{(d)} \chi_d(\alpha) =\left\{
\begin{array}{ll}
	1, \quad & \text{if } \alpha \text{ is } e\text{-free}, \\
	0,            & \text{otherwise,}
\end{array}
\right.
\end{equation}
where $\theta(e)=\frac{\varphi(e)}{e}$,
$\displaystyle \sum_{d\mid e}$ runs over all the positive divisors $d$ of $e$,
$\chi_d$ is a multiplicative character of $\F_{q^n}^*$,
and the sum
$\displaystyle \sum_{(d)} \chi_d$ runs over all the multiplicative characters of order $d$.


The additive group $\mathbb{F}_{q^n}$ is an $\mathbb{F}_q[x]$-module where the action is given by $f \circ \alpha =\displaystyle  \sum_{i=0}^r a_i \alpha^{q^i}$, for any $f=\displaystyle \sum_{i=0}^r a_ix^i\in \mathbb{F}_q[x]$ and $\alpha \in \mathbb{F}_{q^n}$. An element $\alpha \in \F_{q^n}$ has $\F_q$-order $h\in \mathbb{F}_q[x]$ if $h$ is the monic polynomial of lowest degree such that $h \circ \alpha=0$. The $\mathbb{F}_q$-order of $\alpha$ will be denoted by $\mathrm{Ord} (\alpha)$, and clearly the $\F_q$-order of an element $\alpha \in \F_{q^n}$ divides $x^n-1$. An additive character $\eta$ of $\F_{q^n}$ is a group homomorphism of $\mathbb{F}_{q^n}$ to $\mathbb{C}^*$. The group of additive characters $\widehat{\mathbb{F}}_{q^n}$ becomes an $\mathbb{F}_q[x]$-module by defining $f\circ \eta (\alpha)=\eta(f \circ \alpha)$ for $\eta \in \widehat{\mathbb{F}}_{q^n}$, $\alpha \in \mathbb{F}_{q^n}$, and $f \in \mathbb{F}_q[x]$.
An additive character $\eta$ has $\F_q$-order $h \in \mathbb{F}_q[x]$ if $h$ is the monic polynomial of smallest degree such that $h \circ \eta = \eta_0$, where $\eta_0$ is the trivial additive character defined by $\eta_0(\alpha) = 1,$ for all $\alpha \in \Fqn$. The $\mathbb{F}_q$-order of $\eta$ will be denoted by $\mathrm{Ord} (\eta)$.

Let $g\in \mathbb{F}_q[x]$ be a  divisor of $x^n-1$. We say that an element $\alpha \in \mathbb{F}_{q^n}$ is $g$-free if for every polynomial $h \in \mathbb{F}_q[x]$  such that $h \mid g$ and $h\neq 1$, there is no element $\beta \in \F_{q^n}$ satisfying $\alpha= h \circ \beta$. As in the multiplicative case, from \cite[Theorem 13.4.4 e.g.]{galois}, for any $\alpha \in \F_{q^n},$ we get
\begin{equation}\label{eq:Omegag}
\Omega_g(\alpha)= \Theta(g) \sum_{h \mid g} \frac{\mu(h)}{\phi(h)} \sum_{(h)}  \eta_h(\alpha) =\left\{
\begin{array}{ll}
	1, \quad & \text{if } \alpha \text{ is } g\text{-free}, \\
	0,            & \text{otherwise,}
\end{array}
\right.
\end{equation}
where 
$\Theta(g)= \frac{\phi(g)}{q^{\deg(g)}}$,
$\displaystyle \sum_{h \mid g}$ runs over all the monic divisors $h\in \mathbb{F}_q[x]$ of $g$,
$\eta_h$ is an additive character of $\F_{q^n}$, and the sum
$\displaystyle \sum_{(h)} \eta_h$ runs over all additive characters of $\mathbb{F}_q$-order $h$.
It is known that there exist $\phi(h)$ of those characters.




One may check that an element $\alpha \in \mathbb{F}_{q^n}^*$ is primitive if and only if $\alpha$ is $(q^n-1)$-free and $\alpha \in \mathbb{F}_{q^n}$ is normal if and only if $\alpha$ is $(x^n-1)$-free. 

%

The groups $\mathbb{F}_{q^n}^*$ and $\widehat{\mathbb{F}}_{q^n}^*$ are isomorphic as $\mathbb{Z}$-modules, and $\mathbb{F}_{q^n}$ and $\widehat{\mathbb{F}}_{q^n}$ are isomorphic as $\mathbb{F}_q[x]$-modules (see \cite[Theorem 13.4.1.]{galois}).

To finish this section, we present some estimates that are used in the next sections.

\begin{lemma}\cite[Theorem 5.41]{LN}\label{cotamult}
	Let $\chi$ be a multiplicative character of $\mathbb{F}_{q^n}$ of order $r>1$ and $f \in \F_{q^n}[x]$ be a monic polynomial of positive degree such that $f$ is not of the form $g(x)^r$ for some $g \in \mathbb{F}_{q^n}[x]$ with degree at least 1. Let $e$ be the number of distinct roots of $f$ in its splitting field over $\mathbb{F}_{q^n}$. For every $a \in \mathbb{F}_{q^n}$,
	\[
	\left| \sum_{\alpha \in \mathbb{F}_{q^n}} \chi(a f(\alpha)) \right| \leq (e-1)q^{\frac n2}.
	\]
\end{lemma}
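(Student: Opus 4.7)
The plan is to derive the estimate from Weil's theorem on the Riemann hypothesis for curves over finite fields (equivalently, the Hasse--Weil bound), applied to the superelliptic curve $C : y^r = af(x)$ over $\Fqn$. First, I would verify that the hypothesis on $f$ not being an $r$-th power forces the polynomial $y^r - af(x)$ to be absolutely irreducible over $\Fqn(x)$, so that $C$ admits a smooth projective model endowed with a degree-$r$ cover $\pi : C \to \mathbb{P}^1_{\Fqn}$. The ramification of $\pi$ is concentrated over the $e$ distinct roots of $f$ (and possibly the point at infinity), and a direct application of Riemann--Hurwitz bounds the genus by $g(C) \le \tfrac{1}{2}(r-1)(e-1)$.

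Next, I would translate the character sum into a point count. For $c \in \Fqn^*$, the identity $\#\{\beta \in \Fqn : \beta^r = c\} = \sum_{\psi^r = 1} \psi(c)$ (with the convention that $\psi(0) = 0$ whenever $\psi$ is nontrivial) gives
\[
\#C^{\mathrm{aff}}(\Fqn) \;=\; \#\{\alpha \in \Fqn : f(\alpha) \ne 0\} \;+\; \sum_{\psi \ne 1,\; \psi^r = 1} \; \sum_{\alpha \in \Fqn} \psi(af(\alpha)).
\]
To isolate the contribution of the single character $\chi$, rather than the aggregate sum over all $\psi$ of order dividing $r$, I would attach to the pair $(\chi, f)$ its Kummer-type $L$-function $L(T, \chi, f) = \prod_j (1 - \omega_j T)$. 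Its degree is at most $e - 1$, and Weil's theorem asserts that each reciprocal root satisfies $|\omega_j| = q^{n/2}$; since $\sum_\alpha \chi(af(\alpha)) = -\sum_j \omega_j$, the triangle inequality yields the desired bound $\bigl|\sum_\alpha \chi(af(\alpha))\bigr| \le (e - 1)\, q^{n/2}$.

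The main obstacle is of course Weil's theorem itself, whose proof (via the positivity of the Rosati involution on the Jacobian of $C$, or equivalently the Riemann hypothesis for the zeta function of $C$) is far outside the scope of a short sketch; I would simply cite it. A secondary nuisance is the exact computation of the degree of the $L$-function, which requires careful analysis of the local factors at the ramified places, in particular at infinity -- it is standard but easy to miscount if one is not vigilant about which places of $\mathbb{P}^1$ are tamely or wildly ramified. In practice, rather than reproduce this infrastructure, I would proceed exactly as the authors do and cite Theorem 5.41 of Lidl--Niederreiter.
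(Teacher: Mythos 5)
Your proposal is fine: the paper offers no proof of this lemma at all, simply quoting it as Theorem 5.41 of Lidl--Niederreiter, and you end up doing exactly the same thing. Your preliminary sketch via Weil's bound for the superelliptic curve $y^r = af(x)$ and the associated Kummer $L$-function is an accurate outline of the standard proof underlying that cited theorem, so there is nothing to object to.
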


An estimate adapted from \cite[Theorem 5.6]{Fu} with $d=1$ will be used  next.
\begin{lemma}  \label{cotaadmult}
	Let $f(x),g(x) \in \F_{q^n}(x)$ be rational functions. Write
	$f(x)=\prod_{j=1}^k f_j(x)^{n_j}$, where
	$f_j(x) \in \F_{q^n}[x]$ are irreducible polynomials  and
	$n_j$ are non-zero integers. Let $D_1=\sum_{j=1}^k \deg (f_j)$,
	let $D_2=\max (\deg (g), 0)$, let $D_3$ be the degree of the denominator of $g(t)$
	and let $D_4$ be the sum of degrees of those irreducible polynomials dividing the denominator of $g$,
	but distinct from $f_j(x)$ ($j=1,\ldots, k$). Let $\chi: \F_{q^n}^\ast \longrightarrow \mathbb{C}^\ast$
	be a multiplicative character of $\F_{q^n}^*$, and let $\eta: \F_{q^n} \longrightarrow \mathbb{C}^\ast$
	be a non-trivial additive character of $\F_{q^n}$. Suppose that
	$g(x)$ is not of the form $r(x)^{q^n}-r(x)$ in $\F(x),$
where $\F$ is the algebraic closure of 	$\F_{q^n}.$
Then we have the estimate
	\[
	\displaystyle \left|
	\sum_{\substack{\alpha \in \mathbb{F}_{q^n}  \\ f(\alpha)\neq 0,\infty , g(\alpha)\neq \infty}} \chi(f(\alpha)) \eta(g(\alpha)) \right|
	\leq
	\left( D_1 + D_2 + D_3 + D_4 - 1  \right) q^{\frac{n}{2}}.
	\]	
\end{lemma}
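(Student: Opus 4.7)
The plan is to obtain the claim as the $d=1$ specialization of \cite[Theorem 5.6]{Fu}. First I would verify the translation of hypotheses: our $f$ plays the role of the multiplicative twist and $g$ of the additive twist, and the non-triviality condition on the additive piece is exactly that $g(x)\neq r(x)^{q^n}-r(x)$, which guarantees that the associated Artin--Schreier sheaf is geometrically non-trivial. The four constants $D_1,D_2,D_3,D_4$ in the statement match Fu's local invariants when $d=1$: $D_1$ counts the distinct affine zeros and poles of $f$, $D_2$ controls the ramification of $\mathcal{L}_\eta(g)$ at infinity, $D_3$ encodes the total pole order of $g$ at its finite poles, and $D_4$ isolates those finite poles of $g$ not already appearing in the support of $f$.

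For completeness, I would sketch the underlying $\ell$-adic proof. Let $U\subset \mathbb{A}^1_{\F_{q^n}}$ be the open subset on which $\chi(f(\cdot))\eta(g(\cdot))$ is defined, and let $\mathcal{F}=\mathcal{L}_\chi(f)\otimes \mathcal{L}_\eta(g)$ be the lisse rank-one $\ell$-adic sheaf on $U$ obtained by tensoring a Kummer pullback with an Artin--Schreier pullback. The Grothendieck--Lefschetz trace formula yields
\[
\sum_{\alpha \in U(\F_{q^n})} \chi(f(\alpha))\eta(g(\alpha)) = -\operatorname{Tr}\bigl(\operatorname{Frob}_{q^n} \mid H^1_c(U_{\overline{\F}_{q^n}}, \mathcal{F})\bigr),
\]
because the non-degeneracy hypotheses force $\mathcal{F}$ to be geometrically non-constant, so $H^0_c=H^2_c=0$. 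Deligne's purity theorem then implies that every Frobenius eigenvalue on $H^1_c$ has absolute value at most $q^{n/2}$, so the sum is bounded by $q^{n/2}\cdot \dim H^1_c(U_{\overline{\F}_{q^n}},\mathcal{F})$.

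The final step is to evaluate this dimension by Grothendieck--Ogg--Shafarevich: for a rank-one sheaf on $U=\mathbb{A}^1\setminus S$ one has $\dim H^1_c = -\chi_c(U) + \sum_{s\in S\cup\{\infty\}} \operatorname{Sw}_s(\mathcal{F})$. The tame contribution from the punctures distributes as $D_1+D_3+D_4$ (zeros and poles of $f$, finite poles of $g$, and the extra poles of $g$ disjoint from the support of $f$), the Swan conductor of $\mathcal{L}_\eta(g)$ at $\infty$ contributes $D_2$, and subtracting $\chi_c(\mathbb{A}^1)=1$ produces exactly $D_1+D_2+D_3+D_4-1$.

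The main obstacle I anticipate is not the algebro-geometric machinery, which is now standard, but the careful bookkeeping that matches the four constants $D_i$ with the precise local conductor contributions, especially the splitting between poles of $g$ that coincide with zeros or poles of $f$ (absorbed into $D_3$) and those that do not (contributing to $D_4$). This is exactly the accounting carried out in full generality in \cite[Theorem 5.6]{Fu}, so the cleanest route in practice is to invoke that theorem and specialize $d=1$.
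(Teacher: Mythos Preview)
Your proposal is correct and aligns with the paper's own treatment: the paper simply states that this lemma is adapted from \cite[Theorem~5.6]{Fu} with $d=1$ and gives no further argument, so your plan to invoke that theorem and specialize is exactly what is done there. Your additional $\ell$-adic sketch goes beyond what the paper provides; it is essentially sound, though in the final bookkeeping you should note that $D_3$ arises as the sum of Swan conductors of $\mathcal{L}_\eta(g)$ at the finite poles of $g$ (a wild contribution), not as part of the tame puncture count---the tame punctures contribute $D_1+D_4$ via $-\chi_c(U)$ and the wild part contributes $D_2+D_3$, which together give $D_1+D_2+D_3+D_4-1$.
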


\section{General results}\label{sectiongen}

We are interested in finding conditions for the existence of $m$-terms arithmetic progressions in $\Fqn$ with a given common difference whose terms are primitive elements, and at least one of them being normal. For this, the following definitions play important roles.

\begin{definition}
For $m \ge 3$, let $N_m$ be the set of pairs $(q,n)$ such that for every $\beta \in \Fqn^*$ there exists $\alpha \in \Fqn$ for which the elements of $A(\alpha,\beta) = \{\alpha+(j-1)\beta \mid 1 \le j \le m\} \subset \Fqn$ are primitive, and at least one of them is normal over $\Fq$.
\end{definition}

\begin{observation}\label{obs:encaixe}
We have $N_m \subset N_{m-1}$. In fact, if the $m$-terms arithmetic progression $\alpha,\alpha+\beta,\dots,\alpha+(m-1)\beta$ comprises primitive elements and one of the terms is normal, then both $\alpha,\alpha+\beta,\dots,\alpha+(m-2)\beta$ and $\alpha+\beta,\alpha+2\beta,\dots,\alpha+(m-1)\beta$ are $(m-1)$-terms arithmetic progressions formed by primitive elements and one of them contains a normal element.
\end{observation}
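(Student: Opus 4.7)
The plan is to verify the inclusion $N_m \subset N_{m-1}$ directly from the definition of $N_m$. I fix an arbitrary pair $(q,n) \in N_m$ and an arbitrary $\beta \in \Fqn^*$, and I aim to exhibit some $\alpha' \in \Fqn$ for which the $(m-1)$-term progression $A(\alpha',\beta) = \{\alpha' + (j-1)\beta : 1 \le j \le m-1\}$ consists entirely of primitive elements, at least one of which is normal over $\Fq$. By hypothesis there already exists $\alpha \in \Fqn$ such that $\{\alpha + (j-1)\beta : 1 \le j \le m\}$ has that property for the longer length $m$. Let $k \in \{1,\dots,m\}$ be any index for which $\alpha + (k-1)\beta$ is normal over $\Fq$.

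The construction of $\alpha'$ is a simple case distinction according to whether the normal term lies in the initial or terminal $(m-1)$-block. If $k \le m-1$, I set $\alpha' := \alpha$, so that $A(\alpha',\beta)$ is precisely the initial sub-progression $\{\alpha,\alpha+\beta,\dots,\alpha+(m-2)\beta\}$; every term of this sub-progression belongs to the longer witness and is therefore primitive, and the normal element $\alpha+(k-1)\beta$ is among them. If instead $k = m$, I set $\alpha' := \alpha + \beta$, so that $A(\alpha',\beta)$ is the terminal sub-progression $\{\alpha+\beta,\alpha+2\beta,\dots,\alpha+(m-1)\beta\}$; again every term is primitive, and its last entry $\alpha' + (m-2)\beta = \alpha + (m-1)\beta$ is the prescribed normal element.

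Since $\beta \in \Fqn^*$ was arbitrary, this produces in each case a valid $(m-1)$-term witness, so $(q,n) \in N_{m-1}$ and the inclusion $N_m \subset N_{m-1}$ follows. There is no genuine obstacle here: the only subtlety is that the normal element of the longer progression may sit at either extremity, which is precisely why the argument naturally splits into the two cases $k \le m-1$ and $k = m$, and why both of the sub-progressions highlighted in the observation need to be made available as candidates.
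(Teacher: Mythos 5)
Your proposal is correct and follows essentially the same route as the paper: the paper's justification is precisely that the initial and terminal $(m-1)$-term sub-progressions of the given $m$-term witness are both formed by primitive elements and at least one of them contains the normal term. Your explicit case split on the position $k$ of the normal element simply spells out this observation in detail.
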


\begin{definition}\label{def-Nprimnor}
Let $N(e_1,\dots,e_{m},g)$ denote the number of elements $\alpha\in \Fqn$ such that $\alpha+(i-1)\beta$ is $e_i$-free for all $i\in\{1,\dots,m\}$ and there exists $j\in\{1,\dots,m\}$ such that $\alpha+(j-1)\beta$ is $g$-free. 
\end{definition}

\begin{definition}\label{def-Njprimnor}
	Let $N_j(e_1,\dots,e_{m},g)$ denote the number of elements $\alpha\in \Fqn$ such that $\alpha+(i-1)\beta$ is $e_i$-free for all $i\in\{1,\dots,m\}$ and $\alpha+(j-1)\beta$ is $g$-free.
\end{definition}

We will shorten $\bar{e} = (e_1,\dots,e_{m})$ and $W(\bar{e}) = \prod_{i=1}^{m}W(e_i).$  When $\bar{e}=\overline{q^n-1},$ we have $e_i=q^n-1$ for all $i\in\{1, \dots,m\}.$ 

\begin{observation}\label{observation}
From Definitions \ref{def-Nprimnor} and \ref{def-Njprimnor} we have 
\[
N(\bar{e},g) \ge
\frac{1}{m} \sum_{j=1}^m  N_j(\bar{e},g).
\]
\end{observation}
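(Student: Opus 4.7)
The plan is to note that the inequality follows immediately from a direct comparison of Definitions \ref{def-Nprimnor} and \ref{def-Njprimnor}. For each fixed $j \in \{1,\dots,m\}$, the condition characterizing the $\alpha$'s counted by $N_j(\bar{e},g)$ implies the condition characterizing those counted by $N(\bar{e},g)$: both require $\alpha+(i-1)\beta$ to be $e_i$-free for every $i$, and the additional requirement in $N_j$ — namely that $\alpha+(j-1)\beta$ be $g$-free for this specific $j$ — witnesses the existential clause (``there exists $j$'') appearing in $N$. Thus each element counted by $N_j(\bar{e},g)$ is also counted by $N(\bar{e},g)$, giving $N_j(\bar{e},g)\le N(\bar{e},g)$ for every $j$. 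Summing over $j=1,\dots,m$ and dividing by $m$ yields the claim.

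Alternatively, one can present the argument as a one-line double counting. Let $\mathbf{1}_E(\alpha)=1$ if $\alpha+(i-1)\beta$ is $e_i$-free for every $i\in\{1,\dots,m\}$, and $0$ otherwise. Then
$$\sum_{j=1}^m N_j(\bar{e},g) \;=\; \sum_{\alpha\in\Fqn} \mathbf{1}_E(\alpha)\cdot \#\bigl\{\,j\in\{1,\dots,m\}:\alpha+(j-1)\beta\text{ is }g\text{-free}\,\bigr\}.$$
The inner cardinality is at most $m$, and the $\alpha$'s for which it is at least $1$ are precisely those counted by $N(\bar{e},g)$. Hence the right-hand side is bounded above by $m\cdot N(\bar{e},g)$, which is the desired inequality after dividing by $m$.

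There is no real obstacle here: the observation is essentially a tautology once the two definitions are placed side by side, and the choice is only between presenting it as a trivial inclusion $N_j \subseteq N$ of indicator events or as a one-line double counting. The content of the observation is that replacing the existential ``at least one $j$'' in $N$ by the stronger ``specific $j$'' in $N_j$ costs a factor of at most $m$, which is exactly what is needed later to reduce the main counting problem to the study of the individual $N_j(\bar{e},g)$.
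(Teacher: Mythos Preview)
Your proposal is correct; the paper itself treats this observation as immediate from the definitions and gives no separate proof, and your argument (either via the inclusion $N_j(\bar e,g)\le N(\bar e,g)$ for each $j$, or via the equivalent double count) is precisely the natural justification.
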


The main result of this section is the following sufficient inequality-like condition.


%
%

\begin{theorem}\label{principal}
Let $q$ be a prime power, let $n \ge 2$ be an integer, $e_1,\dots, e_{m}\in \mathbb{N}$ be divisors of $q^n-1$, and $g \in \F_q[x]$ be a monic divisor of $x^n-1.$ If 
\[q^{\frac{n}{2}}\ge m W(g)W(\bar{e}),\]
then $N(\bar{e},g)>0.$ In particular, if 
\[q^{\frac n2} \ge mW(x^n-1)W(\overline{q^n-1}),\]
then $(q,n) \in N_m.$
\end{theorem}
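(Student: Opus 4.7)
The plan is to estimate $N_j(\bar{e},g)$ for each $j$ via character sums and then combine the estimates using Observation \ref{observation}. Inserting the orthogonality formulas (\ref{eq:omegae}) and (\ref{eq:Omegag}) into Definition \ref{def-Njprimnor} and interchanging summations, one writes
\[
N_j(\bar{e},g)=\Theta(g)\prod_{i=1}^{m}\theta(e_i)\sum_{\substack{h\mid g\\ d_i\mid e_i}}\frac{\mu(h)}{\phi(h)}\prod_{i=1}^{m}\frac{\mu(d_i)}{\varphi(d_i)}\sum_{\eta_h,\chi_{d_1},\dots,\chi_{d_m}}S_j,
\]
where the inner character sum is
\[
S_j=\sum_{\alpha\in\Fqn}\eta_h(\alpha+(j-1)\beta)\prod_{i=1}^{m}\chi_{d_i}(\alpha+(i-1)\beta),
\]
with the convention that terms are dropped whenever a multiplicative character is evaluated at $0$ (there are at most $m$ such~$\alpha$).

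I would next isolate the principal term, coming from $h=1$ and $d_1=\dots=d_m=1$, whose contribution is $q^n\Theta(g)\prod_i\theta(e_i)$ up to a correction of order $O(m)$. For every other squarefree tuple $(h;d_1,\dots,d_m)$, I would fix a generator of $\widehat{\mathbb{F}}_{q^n}^*$ and rewrite $\prod_i\chi_{d_i}(\alpha+(i-1)\beta)=\chi(F(\alpha))$ for a single multiplicative character $\chi$ and a rational function $F(x)=\prod_i(x+(i-1)\beta)^{a_i}$ supported on at most $m$ distinct nodes. Lemma \ref{cotamult} then bounds $|S_j|$ by $(m-1)q^{n/2}$ in the purely multiplicative case ($h=1$, $\chi$ non-trivial), while Lemma \ref{cotaadmult} (with additive phase $x+(j-1)\beta$, so $D_2=1$) gives $|S_j|\le mq^{n/2}$ when $\eta_h$ is non-trivial; the residual case (all $d_i=1$, $h\ne 1$) produces $S_j=0$ by orthogonality. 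Because the denominators $\tfrac{1}{\phi(h)\prod_i\varphi(d_i)}$ exactly cancel the multiplicity of characters of each prescribed order, the triangle inequality yields
\[
\Bigl|N_j(\bar{e},g)-q^n\Theta(g)\prod_{i=1}^{m}\theta(e_i)\Bigr|\le m\,\Theta(g)\prod_{i=1}^{m}\theta(e_i)\,(W(g)W(\bar{e})-1)\,q^{n/2}.
\]

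Combining this with the hypothesis $q^{n/2}\ge mW(g)W(\bar{e})$ shows $N_j(\bar{e},g)>0$ for every $j$; averaging over $j$ and invoking Observation \ref{observation} delivers $N(\bar{e},g)>0$. The ``in particular'' clause then follows by specializing $g=x^n-1$ and $e_i=q^n-1$, since $(x^n-1)$-freeness and $(q^n-1)$-freeness coincide respectively with being normal and being primitive. The main technical obstacle is to verify the non-degeneracy hypotheses of Lemmas \ref{cotamult} and \ref{cotaadmult}: one has to check that $F(x)$ is not a perfect $r$-th power when $\chi$ has order $r>1$, and that the combined additive--multiplicative data does not reduce to the Artin--Schreier form $r(x)^{q^n}-r(x)$; both rule-outs are achieved through a careful case analysis exploiting the squarefree structure of the divisor indices and the distinctness of the nodes $-(i-1)\beta$ in $\Fqn$.
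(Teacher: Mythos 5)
Your proposal is correct and follows essentially the same route as the paper: expand via the orthogonality relations \eqref{eq:omegae} and \eqref{eq:Omegag}, isolate the principal term, bound the non-principal character sums by $(m-1)q^{n/2}$ and $mq^{n/2}$ using Lemmas \ref{cotamult} and \ref{cotaadmult}, and conclude under $q^{n/2}\ge mW(g)W(\bar e)$. The only small slip is the claim that the case $\bar d=\bar 1$, $h\ne 1$ vanishes by orthogonality; since the sum excludes the $\le m$ points of $A$ it is merely $O(m)$, but this is absorbed exactly as in the paper (which bounds the analogous term by $m^2\le mq^{n/2}$) and does not affect the conclusion.
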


\begin{proof}
Let $e_1,\dots,e_m \in \mathbb{N}$ be divisors of $q^n-1$, $g \in \mathbb{F}_q[x]$ be a divisor of $x^n-1$, and $\beta \in \Fqn^*$. We will find
a lower bound for $N(\bar{e},g)$.
From Equations \eqref{eq:omegae} and \eqref{eq:Omegag}, Definitions \ref{def-Nprimnor} and \ref{def-Njprimnor},
and Observation \ref{observation}, we have
\[N(\bar{e},g)  \ge \frac{1}{m} \sum_{\alpha\in\Fqn\backslash A} \left[\prod_{i=1}^{m}w_{e_i}(\alpha+(i-1)\beta)\sum_{j=1}^{m}\Omega_g(\alpha+(j-1)\beta)\right],\]
where
\begin{align*}
\sum_{j=1}^{m}\Omega_g(\alpha+(j-1)\beta) &= 
\sum_{j=1}^m \Theta(j) \sum_{h \mid g} \frac{\mu(h)}{\phi(h)} \sum_{(h)} \eta_h(\alpha+(j-1)\beta) \\
&= \Theta(g) \sum_{h \mid g} \frac{\mu(h)}{\phi(h)} \sum_{(h)} \sum_{j=1}^m \eta_h(\alpha+(j-1)\beta), \\
\prod_{i=1}^m w_{e_i}(\alpha + (i-1)\beta) &= \prod_{i=1}^m \left( \theta(e_i) \sum_{d_i \mid e_i} \frac{\mu(d_i)}{\varphi(d_i)} \sum_{(d_i)} \chi_{d_i}(\alpha+(i-1)\beta) \right),
\end{align*}
and $A = \{-(i-1)\beta \mid 1 \le i \le m\}.$
Thus,
\begin{equation}\label{ineq1}
N(\bar e,g) \ge \frac{1}{m} \Theta(g) \theta(\overline{e}) \sum_{\bar d \mid \bar e} \sum_{h \mid g}
\frac{\mu(\bar d)}{\varphi(\bar d)} \frac{\mu(h)}{\phi(h)} \sum_{(\bar d)} \sum_{(h)} S(\chi_{\bar d},\eta_h),
\end{equation}
where $\theta(\bar e)=\prod_{i=1}^m \theta(e_i)$, $\mu(\bar d)=\prod_{i=1}^m \mu(d_i)$,
$\varphi(\bar d)=\prod_{i=1}^m \varphi(d_i)$, $\bar d \mid \bar e$ means $d_i \mid e_i$ for all $i \in \{1,\dots,m\}$, and
\[S(\chi_{\bar d}, \eta_h) = \sum_{\alpha \in \Fqn \backslash A}
\left( 
 \chi_{d_1}(\alpha) \dots \chi_{d_m}(\alpha+(m-1)\beta) \sum_{j=1}^m \eta_h(\alpha+(j-1)\beta)
 \right).\]
We now split into four cases according to possible values of $\bar d$ and $h$.
\begin{itemize}
\item For $\bar d = \bar 1$ and $h=1$, it follows that $S(\chi_{\bar 1},\eta_0) = m(q^n - m)$,
where $\chi_1$ denotes the trivial multiplicative character defined by $\chi_1(\alpha) = 1$ for all $\alpha \in \Fqn^*$, and $\eta_0$ the trivial
additive character.

\item For $\bar d \neq \bar 1$ and $h=1$,
consider $\chi_0$ a generator of the group of multiplicative characters of $\Fqn^*$ (see \cite[Corollary 5.9]{LN}). As consequence, for each $i\in \{1,\ldots, m\}$
there exists an integer $n_i \in \{0,1,\ldots , q^n-2\}$ such that
$\chi_{d_i}(\alpha)=\chi_0(\alpha^{n_i})$ for all $\alpha \in \mathbb{F}_{q^n}^*$. Observe that
$(n_1,\ldots , n_m) \neq \bar{0},$ since $\bar d \neq \bar 1.$
So, we have
\[
|S(\chi_{\bar d},\eta_0)| = m 
\left| \sum_{\alpha \in \Fqn \backslash A} \chi_{d_1}(\alpha) \dots \chi_{d_m}(\alpha+(m-1)\beta)
\right|
 = m \left| 
 \sum_{\alpha \in \Fqn \backslash A}
 \chi_0( F(\alpha) )\right| ,
\]
where $F(\alpha) = \prod_{i=1}^m (\alpha + (i-1)\beta)^{n_i}.$ It is clear that
there is no $G\in\mathbb{F}_{q^n}[x]$ such that $F(x)=G(x)^{q^n-1}$. 
So we may apply
Lemma \ref{cotamult} and
it follows that $|S(\chi_{\bar d},\eta_0)| \le m(m-1)q^{\frac n2}.$

\item For $\bar d = \bar 1$ and $h \neq 1$, 
from \cite[Theorem 5.4]{LN}
it follows that
\[
|S(\chi_{\bar 1},\eta_h)| = \left| \sum_{j=1}^m \eta_h(\beta)^{j-1} \right|
\cdot
\left| - \sum_{\alpha \in A} \eta_h(\alpha)\right| \le m^2.
\]

\item For $\bar d \neq \bar 1$ and $h \neq 1$, it follows that 
\begin{eqnarray*}
\left| S(\chi_{\bar d},\eta_h)\right| & = & \left|\sum_{j=1}^{m}\sum_{\alpha \in \Fqn \backslash A} \prod_{i=1}^{m}\chi_{d_i}(\alpha+(i-1)\beta)\eta_h(\alpha+(j-1)\beta)\right|\\
& \le & \sum_{j=1}^{m}\left|\eta_h((j-1)\beta)\sum_{\alpha \in \Fqn \backslash A} \prod_{i=1}^{m}\chi_{d_i}(\alpha+(i-1)\beta)\eta_h(\alpha)\right|\\
& \le & m\left|\sum_{\alpha \in \Fqn \backslash A} \chi_0(F(\alpha))\eta_h(\alpha)\right|,
\end{eqnarray*}
where $F(\alpha) = \prod_{i=1}^m (\alpha + (i-1)\beta)^{n_i},$ and $(n_1,\ldots , n_m)$ is the set of positive integer we defined before.
Using Lemma \ref{cotaadmult} with $D_1\le m,$ $D_2=1$ and $D_3=D_4=0,$ we have
$\left| S(\chi_{\bar d},\eta_h)\right|\le m^2 q^{\frac n2}.$
\end{itemize}

We may rewrite the right-hand side of Inequality \eqref{ineq1} as
\[
\frac{1}{m} \Theta(g) \theta(\bar e) (S_1+S_2+S_3+S_4),
\]
where 
\[
\begin{array}{ll}
\displaystyle S_1 = S(\chi_{\bar 1},\eta_0) = m(q^n - m),  
&\displaystyle S_2 = \sum_{\substack{\bar d \mid \bar e \\ \bar d \neq \bar 1}}  \frac{\mu(\bar d )}{\varphi(\bar d )}
 \sum_{(\bar d)} S(\chi_{\bar d},\eta_0), \\
\displaystyle S_3 = \sum_{\substack{h \mid g\\ h \ne 1}}  \frac{\mu(h)}{\phi(h)}  \sum_{(h)} S(\chi_{\bar 1},\eta_h), \text { and }  
&\displaystyle S_4 = \sum_{\substack{\bar d \mid \bar e \\ \bar d \neq \bar 1}}
\sum_{\substack{h \mid g\\ h \ne 1}} \frac{\mu(\bar d )}{\varphi(\bar d)} \frac{\mu(h)}{\phi(h)}
\sum_{(\bar d)} \sum_{(h)} S(\chi_{\bar d},\eta_h).
\end{array}
\]

From the considerations above, and
using that there
are $\varphi(d)$ multiplicative characters of order $d$
and $\phi(h)$ additive characters of $\mathbb{F}_q$-order $h$, we get
\begin{eqnarray*}
S_1+S_2+S_3+S_4 & \ge & S_1 - |S_2|-|S_3|-|S_4| \\
& \ge & m(q^n - m) -  m(m-1)q^{\frac n2} (W(\bar{e})-1) \\
&     & -m^2 (W(g)-1) - m^2q^{\frac n2}  (W(g)-1)(W(\bar{e})-1)\\
& >   & mq^n - m^2q^{\frac n2} (W(g)W(\bar{e})-1) + mq^{\frac n2}(W(\bar{e})-1) - m^2 \\
& \ge   & mq^n - m^2q^{\frac n2} (W(g)W(\bar{e})-1),
\end{eqnarray*}
since $m^2\le mq^{\frac n2}$.
Thus, if $q^{\frac n2} \ge m W(g) W(\bar{e}),$ then $N(\bar{e},g)>0$. In particular,
if $q^{\frac n2}\ge m W(x^n-1)W(q^n-1)^m,$ then $N(\overline{q^n-1},x^n-1)  > 0.$

\end{proof}


The sieving technique from the next two results is similar to others which have appeared in  
previous works about primitive or normal elements.

\begin{lemma}\label{lemmasieve}
Let $q$ be a prime power, $n \ge 2$ be an integer, and $j\in \{1,\ldots,m\}$.
Let $e$ be a divisor of $q^n-1,$ and let $\{p_1,\dots,p_r\}$ be the set of 
all primes which divide $q^n-1$ but do not divide $e.$
Also, let $g \in \F_q[x]$ be a divisor of $x^n -1,$ and let $\{h_1,\ldots,h_s\} \subset \F_q[x]$
be the set of all monic irreducible polynomials which
divide $x^n -1$ but do not divide $g.$ Then
\begin{eqnarray}\label{sieve}
N_j(\overline{q^n-1},x^n-1)& \geq & 
\sum_{i=1}^{r} N_j (p_i e_1,e_2,\ldots ,e_m ,g) + 
\sum_{i=1}^{r} N_j (e_1,p_i e_2,e_3,\ldots ,e_m ,g) \nonumber\\
& & + \dots + 
\sum_{i=1}^{r}N_j(e_1,\ldots , e_{m-1}, p_i e_m,g)
+ \sum_{i=1}^{s}N_j(\bar{e}, h_i g) \nonumber \\
& & 
- (mr+s-1)N_j(\bar{e},g). 
\end{eqnarray}
\end{lemma}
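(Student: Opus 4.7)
The plan is to apply a Bonferroni-type inclusion--exclusion inside the set counted by $N_j(\bar e,g)$. The key observation to extract first is that since each $p_i$ is a prime \emph{not} dividing $e_k$, an element $\alpha\in\Fqn$ is $(p_ie_k)$-free exactly when it is both $e_k$-free and $p_i$-free; analogously, since each $h_i$ is a monic irreducible polynomial not dividing $g$, an element is $(h_ig)$-free exactly when it is $g$-free and $h_i$-free. Iterating these equivalences, being $(q^n-1)$-free in the $k$-th slot amounts to being $e_k$-free together with $p_i$-free for every $i\in\{1,\ldots,r\}$, and being $(x^n-1)$-free in the $j$-th slot amounts to being $g$-free together with $h_i$-free for every $i\in\{1,\ldots,s\}$.

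Next, I would let $A\subset\Fqn$ denote the set counted by $N_j(\bar e,g)$. Inside $A$ I introduce, for each pair $(i,k)\in\{1,\ldots,r\}\times\{1,\ldots,m\}$, the bad subset $B_{i,k}\subset A$ consisting of those $\alpha$ for which $\alpha+(k-1)\beta$ fails to be $p_i$-free; and for each $i\in\{1,\ldots,s\}$, the bad subset $C_i\subset A$ consisting of those $\alpha$ for which $\alpha+(j-1)\beta$ fails to be $h_i$-free. By the equivalences of the previous paragraph, the $\alpha$'s counted by $N_j(\overline{q^n-1},x^n-1)$ are exactly the elements of $A\setminus\bigl(\bigcup_{i,k}B_{i,k}\cup\bigcup_i C_i\bigr)$, so the union bound (equivalently, Bonferroni's first inequality) yields
\[
N_j(\overline{q^n-1},x^n-1)\ \ge\ |A|-\sum_{i=1}^{r}\sum_{k=1}^{m}|B_{i,k}|-\sum_{i=1}^{s}|C_i|.
\]

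To close the argument, I would rewrite each subtracted term: once more by the equivalences above, $|A|-|B_{i,k}|$ equals $N_j(e_1,\ldots,p_ie_k,\ldots,e_m,g)$ (the elements of $A$ that additionally satisfy the missing $p_i$-freeness at position $k$), and similarly $|A|-|C_i|=N_j(\bar e, h_ig)$. Substituting these into the displayed inequality and collecting the $(mr+s)$ subtracted copies of $|A|=N_j(\bar e,g)$ against the one $+|A|$ that we started with reproduces inequality \eqref{sieve}. The argument is purely set-theoretic, so no analytic input is required; the only place that genuinely demands care is the bookkeeping to verify that the coefficient of $N_j(\bar e,g)$ comes out to $-(mr+s-1)$, together with the initial check that the "$(pe)$-free iff $e$-free and $p$-free" equivalence really uses the hypothesis $p\nmid e$ (and the analogous statement on the additive side uses $h\nmid g$).
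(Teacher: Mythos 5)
Your proof is correct and is essentially the paper's argument: the authors verify directly that each $\alpha$ is counted at most as many times on the right of \eqref{sieve} as on the left, which is exactly your union bound on the bad sets $B_{i,k}$ and $C_i$ after rewriting $|B_{i,k}|=N_j(\bar e,g)-N_j(e_1,\ldots,p_ie_k,\ldots,e_m,g)$ and $|C_i|=N_j(\bar e,g)-N_j(\bar e,h_ig)$. The supporting observation that $(p_ie_k)$-freeness is equivalent to $e_k$-freeness together with $p_i$-freeness (and its additive analogue) is also the one the paper relies on.
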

\begin{proof}
The left-hand side of \eqref{sieve} counts every $\alpha \in \mathbb{F}_{q^n}$ for which
$\alpha + (i-1)\beta$ is primitive for every $i \in \{1,\ldots , m\}$ and
$\alpha + (j-1)\beta$ is normal. Observe that if $\alpha$ is one of these elements,
then  $\alpha + (i-1)\beta$ is $e_i$-free, $p_te_i$-free for all 
$i \in \{ 1,\ldots ,m\}$ and all $t \in \{1,\ldots , r \}$, and
$\alpha + (j-1)\beta$ is $g$-free and $h_ig$-free for all
$i \in \{ 1,\ldots ,s\}$, so $\alpha$ is counted $(mr+s)-(mr+s-1)=1$ times on the 
right-hand side of \eqref{sieve}.
For any other $\alpha \in \mathbb{F}_{q^n}$, we
have that either $\alpha + (i-1)\beta$ is not $p_t e_i$-free
for some  $i \in \{1,\ldots , m\}$ and some $t \in \{1,\ldots , r \}$,
or $\alpha + (j-1)\beta$ is not $h_ig$-free for some $i \in \{ 1,\ldots ,s\}$,
so
$\alpha$ will not be counted in at least one of the
first $m+1$
sums of the right-hand side of \eqref{sieve}.

\end{proof}

\begin{proposition}\label{sieve-prop}
Let $q$ be a prime power, and let $n \ge 2$ be an integer.
Let $e$ be a divisor of $q^n-1,$ and let $\{p_1,\dots,p_r\}$ be the set of 
all primes which divide $q^n-1$ but do not divide $e.$
Also, let $g \in \F_q[x]$ be a divisor of $x^n -1,$ and $\{h_1,\dots,h_s\} \subset \F_q[x]$
be the set of all monic irreducible polynomials which divide $x^n -1$ but do not divide $g.$
Suppose that 
$\delta=1-m\sum_{i=1}^{r}\frac{1}{p_i}-\sum_{i=1}^{s}\frac{1}{q^{\deg h_i}}>0$
and let
$\Delta=2+ \frac{mr+s-1}{\delta}$. 
If
$q^{\frac{n}{2}} \geq m \Delta W(g) W(e)^m,$
then $N(\overline{q^n-1},x^n-1)  > 0.$
\end{proposition}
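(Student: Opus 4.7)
The plan is to combine Observation~\ref{observation} with Lemma~\ref{lemmasieve}: since $N(\overline{q^n-1},x^n-1) \ge \frac{1}{m}\sum_{j=1}^m N_j(\overline{q^n-1},x^n-1)$, it suffices to show $N_j(\overline{q^n-1},x^n-1)>0$ for every $j$. By the sieve inequality \eqref{sieve}, this is implied by a lower bound on $\sum_{k=1}^m\sum_{i=1}^r N_j(\bar{e}_{k,i},g) + \sum_{i=1}^s N_j(\bar{e},h_ig) - (mr+s-1)N_j(\bar{e},g)$, where $\bar{e}_{k,i}$ denotes the tuple $\bar{e}=(e,\dots,e)$ with its $k$-th entry replaced by $p_i e$ (so $W(\bar{e})=W(e)^m$).

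First I will expand every $N_j(\bar{e}',g')$ using \eqref{eq:omegae} and \eqref{eq:Omegag}, running the same four-case analysis as in the proof of Theorem~\ref{principal} but without the averaging factor $1/m$ (so the $m^2q^{n/2}$ bounds there become $mq^{n/2}$ bounds for a single $j$). This yields $N_j(\bar{e}',g')=\theta(\bar{e}')\Theta(g')q^n + R_j(\bar{e}',g')$ with the uniform estimate $|R_j(\bar{e}',g')|\le \theta(\bar{e}')\Theta(g')\cdot mq^{n/2}W(\bar{e}')W(g')$, and in particular the pointwise lower bound $N_j(\bar{e},g)\ge \theta(\bar{e})\Theta(g)\bigl(q^n-mq^{n/2}W(\bar{e})W(g)\bigr)$.

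Next, I would decompose each augmented term by splitting $\sum_{d_k\mid p_ie_k}=\sum_{d_k\mid e_k}+\sum_{d_k=p_id_k^0,\,d_k^0\mid e_k}$; using $\mu(p_id)=-\mu(d)$ and $\varphi(p_id)=(p_i-1)\varphi(d)$, the first half reassembles $(1-1/p_i)N_j(\bar{e},g)$, while the second gives a remainder $\theta(\bar{e}_{k,i})\Theta(g)P_{k,i}$ whose only contributing $(\bar d,h)$-pairs are those with $p_i\mid d_k$; an analogous split applies to $N_j(\bar{e},h_ig)$. Plugged into \eqref{sieve}, the coefficients of $N_j(\bar{e},g)$ telescope via
\[
m\sum_{i=1}^{r}\bigl(1-\tfrac{1}{p_i}\bigr)+\sum_{i=1}^{s}\bigl(1-\tfrac{1}{q^{\deg h_i}}\bigr)-(mr+s-1) = \delta.
\]
The key estimate is $|P_{k,i}|\le q^{n/2}W(\bar{e})(mW(g)-1)$ (from Lemmas~\ref{cotamult} and \ref{cotaadmult}, applied only to the \emph{new} square-free pairs), together with the analogous $|P_i^{h}|\le mq^{n/2}W(\bar{e})W(g)$ on the $h$-side. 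After inserting the factors $(1-1/p_i)\le 1$ and $(1-1/q^{\deg h_i})\le 1$ and summing, the total new-character error is at most $m\bigl[m(r-\sum 1/p_i)+(s-\sum 1/q^{\deg h_i})\bigr]\theta(\bar{e})\Theta(g)q^{n/2}W(\bar{e})W(g) = m(mr+s-1+\delta)\,\theta(\bar{e})\Theta(g)q^{n/2}W(\bar{e})W(g)$, using $mr+s-(1-\delta)=mr+s-1+\delta$.

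Combining this with $\delta N_j(\bar{e},g)\ge \delta\theta(\bar{e})\Theta(g)\bigl(q^n-mq^{n/2}W(\bar{e})W(g)\bigr)$ produces
\[
N_j(\overline{q^n-1},x^n-1)\ge \theta(\bar{e})\Theta(g)\bigl[\delta q^n - m(2\delta+mr+s-1)\,q^{n/2}W(\bar{e})W(g)\bigr],
\]
which is positive precisely when $q^{n/2}\ge m\Delta W(g)W(e)^m$ with $\Delta = 2+(mr+s-1)/\delta$. The main obstacle is the bookkeeping: the crude estimate $|N_j(\bar{e}_{k,i},g)-(1-1/p_i)N_j(\bar{e},g)|\le 2mq^{n/2}\theta(\bar{e})\Theta(g)W(\bar{e})W(g)$ coming from $W(\bar{e}_{k,i})=2W(\bar{e})$ would inflate the constant to roughly $3(mr+s)/\delta$, so only the finer extraction of the "new" pairs ($p_i\mid d_k$ or $h_i\mid h$) produces the sharp telescoping $mr+s-1+\delta$ that combines with the $\delta$ of the main-term correction to give exactly the claimed $\Delta$.
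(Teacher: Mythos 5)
Your proposal is correct and follows essentially the same route as the paper: rewrite the sieve inequality so the coefficient of $N_j(\bar e,g)$ telescopes to $\delta$, extract only the ``new'' character pairs (those with $p_i\mid d_k$ or $h_i\mid h$) from each augmented term, bound each by $mq^{n/2}$ via Lemmas \ref{cotamult} and \ref{cotaadmult} while retaining the factors $\theta(p_i)$ and $\Theta(h_i)$, and combine with the lower bound for $N_j(\bar e,g)$ from Theorem \ref{principal} to recover $\Delta=2+\frac{mr+s-1}{\delta}$. Your closing remark about why the crude $W(\bar e_{k,i})=2W(\bar e)$ bound would not suffice is exactly the point of the paper's finer decomposition.
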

\begin{proof}
We can rewrite Inequality \eqref{sieve} in the form
\begin{eqnarray*}
N_j(\overline{q^n-1},x^n-1)& \geq & 
\sum_{i=1}^{r} \left[ N_j(p_i e_1,e_2,\ldots ,e_m ,g) - \theta(p_i)N_j(\bar{e}, g)\right] \\
& &
+ \sum_{i=1}^{r} \left[ N_j (e_1,p_i e_2,e_3,\ldots ,e_m ,g) - \theta(p_i)N_j(\bar{e}, g)\right] \\
& & + \dots + \\
& & 
+ \sum_{i=1}^{r} \left[ N_j(e_1,\ldots , e_{m-1}, p_i e_m,g) - \theta(p_i)N_j(\bar{e}, g)\right] \\
& & + \sum_{i=1}^{s}\left[ N_j(\bar{e}, h_i g) - \Theta(h_i)N_j(\bar{e}, g)\right] + \delta  N_j(\bar{e},g),
\end{eqnarray*}
where $e=e_1=\dots = e_m$, $\theta(e) = \frac{\varphi(e)}{e}$, and $\Theta(g) = \frac{\phi(g)}{q^{\deg(g)}}$.

Let $i \in \{1,\ldots,r\}$. From the definitions of $w_m$, $\Omega_g$ and 
Definition \ref{def-Njprimnor}, and
taking into account that
$\theta$ is a multiplicative function, we get
\begin{eqnarray*}
N_j(p_ie_1,e_2,\ldots ,e_m,g) & = &  \Theta(g) \theta(p_i) \theta(\bar e)
\sum_{\substack{d_1 \mid p_i e_1 \\ d_t \mid e_t\\ t\in \{2,\ldots,m \}}}
\sum_{h \mid g} \frac{\mu(\bar d)}{\varphi(\bar d)} \frac{\mu(h)}{\phi(h)} \sum_{(\bar d)} \sum_{(h)}
S_j(\chi_{\bar d},\eta_h) \\
& = & 
\theta(p_i)N_j(\bar e,g) \\ & &
+ \Theta(g) \theta(p_i) \theta(\bar e)
\sum_{\substack{d_1 \mid p_i e_1 \\ p_i \mid d_1 \\ d_t \mid e_t\\ t\in \{2,\ldots,m \}}}
\sum_{h \mid g} \frac{\mu(\bar d)}{\varphi(\bar d)} \frac{\mu(h)}{\phi(h)} \sum_{(\bar d)} \sum_{(h)}
S_j(\chi_{\bar d},\eta_h) ,
\end{eqnarray*}
where
\[S_j(\chi_{\bar d}, \eta_h) = \sum_{\alpha \in \Fqn \backslash A}
\chi_{d_1}(\alpha) \dots \chi_{d_m}(\alpha+(m-1)\beta)  \eta_h(\alpha+(j-1)\beta).\]

Now, from Lemmas \ref{cotamult} and \ref{cotaadmult}
we have $|S_j(\chi_{\bar d}, \eta_h)| \le m q^{\frac n2}$. So
\[
|N_j(p_ie_1,e_2,\ldots , e_m,g) - \theta(p_i)N_j(\bar e,g)| \le \Theta(g) \theta(p_i) \theta(\bar e) m q^{\frac n2} W(g)W(\bar e).
\]
In a similar way, for any $t \in \{ 1,\ldots , m\}$ we have
\[
|N_j(e_1,\ldots p_i e_t , \ldots ,e_m,g) - \theta(p_i)N_j(\bar e,g)| \le \Theta(g) \theta(p_i) \theta(\bar e) m q^{\frac n2}
W(g)W(\bar e).
\]

Let $i \in \{1,\ldots,s\}$.
Once again, from the definitions of $w_m$, $\Omega_g$ and 
Definition \ref{def-Njprimnor}, and
taking into account that
$\Theta$ is a multiplicative function, we have
\begin{eqnarray*}
N_j(\bar e, h_ig) & = &  \Theta(h_i)\Theta(g)  \theta(\bar e)
	\sum_{\bar d \mid \bar e}
		\sum_{h \mid h_i g}  \frac{\mu(\bar d)}{\varphi(\bar d)} \frac{\mu(h)}{\phi(h)} \sum_{(\bar d)} \sum_{(h)}
	S_j(\chi_{\bar d},\eta_h) \\
	& = & 
	\Theta(h_i)N_j(\bar e, h_ig) \\ & &
	+ \Theta(h_i) \Theta(g)  \theta(\bar e)
	\sum_{\bar d \mid \bar e}
\sum_{\substack{h \mid h_ig \\ h_i \mid h}} \frac{\mu(\bar d)}{\varphi(\bar d)} \frac{\mu(h)}{\phi(h)} \sum_{(\bar d)} \sum_{(h)}
	S_j(\chi_{\bar d},\eta_h) ,
\end{eqnarray*}
Now, from Lemma \ref{cotaadmult}
we have $|S_j(\chi_{\bar d}, \eta_h)| \le m q^{\frac n2}$. So
\[
|N_j(\bar e,h_ i g) - \Theta(h_i)N_j(\bar e,g)| \le \Theta(h_i) \Theta(g) \theta(\bar e) m q^{\frac n2} W(g)W(\bar e).
\]

Combining all previous inequalities we obtain
\begin{align*}
N_j(\overline{q^n-1},x^n-1) \geq & \;\delta  N_j(\bar{e},g) \\
		&  - \Theta(g) \theta(\bar e)W(g)W(\bar e) m q^{\frac n2}  \left( m \sum_{i=1}^r \theta(p_i) + \sum_{i=1}^{s}\Theta(h_i) \right).
\end{align*}
Therefore, following the ideas 
from the proof of Theorem \ref{principal}, we have
\[
N_j(\bar{e},g) > \Theta(g) \theta(\bar e) (q^n - mq^{\frac n2}W(g)W(\bar e) ),
\]
and
\begin{align*}
N_j(\overline{q^n-1},x^n-1) & > \Theta(g) \theta(\bar e) q^{\frac n2} \Bigg[\delta ( q^{\frac n2} - mW(g)W(\bar e)) \\
	&  - m W(g)W(\bar e)  \left( m \sum_{i=1}^r \theta(p_i) + \sum_{i=1}^{s}\Theta(h_i) \right) \Bigg] \\
	& = \delta \Theta(g) \theta(\bar e) q^{\frac n2} \left(q^{\frac n2} - m\Delta W(g)W(\bar e) \right).
\end{align*}
Substituing $W(\bar e)=W(e)^m$ and from Observation \ref{observation}, we obtain the desired result.

\end{proof}

%

The next result is restricted to the case $n=2$ and improves the bound given by Theorem \ref{principal}. 

\begin{proposition}\label{casen2}
For any prime power $q$, if $\alpha \in \mathbb F_{q^2}$ is primitive, then $\alpha$ is normal
over $\mathbb F_{q}$.
\end{proposition}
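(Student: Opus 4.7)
The plan is to use the standard characterization that an element $\alpha \in \mathbb{F}_{q^2}\setminus\{0\}$ is normal over $\mathbb{F}_q$ if and only if $\{\alpha,\alpha^q\}$ is a basis of $\mathbb{F}_{q^2}$ over $\mathbb{F}_q$, which (in dimension two) is equivalent to $\alpha$ and $\alpha^q$ being $\mathbb{F}_q$-linearly independent, i.e.\ to the non-existence of $c\in \mathbb{F}_q$ with $\alpha^q=c\alpha$. This is precisely the $(x^2-1)$-freeness condition recalled in Section~\ref{sectionpreli}, specialized to $n=2$.

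First I would note that a primitive $\alpha$ is nonzero and satisfies $\alpha\notin \mathbb{F}_q$, since every element of $\mathbb{F}_q^*$ has multiplicative order dividing $q-1$, while $\alpha$ has order $q^2-1>q-1$. In particular, $\alpha^q\neq 0$, so any hypothetical relation $\alpha^q=c\alpha$ forces $c\in\mathbb{F}_q^*$.

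Next, I would argue by contradiction: assume $\alpha^q=c\alpha$ for some $c\in\mathbb{F}_q^*$. Then $c=\alpha^{q-1}$ lies in $\mathbb{F}_q^*$, so the order of $\alpha^{q-1}$ divides $q-1$. On the other hand, since $\alpha$ has order $q^2-1$ and $\gcd(q-1,q^2-1)=q-1$, the order of $\alpha^{q-1}$ equals $(q^2-1)/(q-1)=q+1$. Hence one would need $q+1 \mid q-1$, which is impossible for any prime power $q\ge 2$.

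This contradiction shows that $\{\alpha,\alpha^q\}$ is $\mathbb{F}_q$-linearly independent, hence a basis of $\mathbb{F}_{q^2}$ over $\mathbb{F}_q$, i.e.\ $\alpha$ is normal. There is no real obstacle here — the argument is a direct order computation — but the one small point to be careful about is ruling out $c=0$ before dividing by $\alpha$, which is handled by injectivity of the Frobenius (equivalently, by $\alpha\neq 0$).
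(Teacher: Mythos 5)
Your proof is correct and follows essentially the same route as the paper's: assume $\{\alpha,\alpha^q\}$ is linearly dependent, deduce $\alpha^{q-1}\in\mathbb{F}_q^*$, and derive a contradiction from the multiplicative order of $\alpha$. The only cosmetic difference is that you compute the order of $\alpha^{q-1}$ to be $q+1$ and note $q+1\nmid q-1$, while the paper observes $\alpha^{(q-1)^2}=1$ with $(q-1)^2<q^2-1$; these are the same order argument.
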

\begin{proof}
Assume that $\alpha$ is not normal. Thus $\{\alpha, \alpha^q\}$ is linearly dependent over $\Fq$, which implies that $\frac{\alpha^q}{\alpha} \in \Fq$. Therefore, $\alpha^{(q-1)^2} = (\alpha^{q-1})^{q-1} = 1$. Since $(q-1)^2 < q^2-1$, it follows that $\alpha$ is not primitive, a contradiction.

\end{proof}

\begin{observation}\label{observation2}
From Proposition \ref{casen2} and \cite[Theorem 3]{Cohen2015}, it follows that if 
$q \ge (m-1)W(q^2-1)^m,$
then $(q,2) \in N_m$, and from \cite[Theorem 5]{Cohen2015}
if $q > (m-1)\left(\frac{mr-1}{\delta}+2 \right) W(e)^m,$
then $(q,2) \in N_m$, where $e$ is a divisor of $q^2-1$ and 
$\delta=1 - m \sum_{i=1}^r \frac{1}{p_i}>0$, where $p_1,\ldots , p_r$ 
(for $r \ge 0$) are the primes dividing $q^2-1$ but not $e$.

In fact,  \cite[Theorem 3]{Cohen2015} and  \cite[Theorem 5]{Cohen2015}
deal with $\beta =1$, but if we look at the whole paper, we
observe that they work for all $\beta \in \mathbb{F}_{q}^*.$
\end{observation}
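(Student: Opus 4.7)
The plan is to reduce the problem to the pure existence of $m$-term arithmetic progressions of primitive elements in $\mathbb{F}_{q^2}$ and then invoke the relevant theorems of \cite{Cohen2015}. First, I would apply Proposition \ref{casen2}: every primitive element of $\mathbb{F}_{q^2}$ is automatically normal over $\mathbb{F}_q$. Consequently, for $n=2$, the requirement that at least one term of the progression $\alpha,\alpha+\beta,\ldots,\alpha+(m-1)\beta$ be normal is automatic once all $m$ terms are primitive. Thus the defining condition of $N_m$ at $n=2$ is equivalent to asserting that for every admissible $\beta$ there exists $\alpha\in\mathbb{F}_{q^2}$ for which all $m$ terms $\alpha+(i-1)\beta$ are primitive.

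Second, I would feed this equivalence into the two cited results of Cohen. The non-sieve estimate \cite[Theorem 3]{Cohen2015} guarantees the existence of an $m$-term arithmetic progression of primitive elements in $\mathbb{F}_{q^2}$ (with common difference $1$) provided $q\ge (m-1)W(q^2-1)^m$, yielding the first claimed bound. Similarly, the sieve-type refinement \cite[Theorem 5]{Cohen2015} delivers the weaker condition $q>(m-1)\left(\tfrac{mr-1}{\delta}+2\right)W(e)^m$, where $e\mid q^2-1$, the $p_i$'s are the primes dividing $q^2-1$ but not $e$, and $\delta=1-m\sum_{i=1}^r p_i^{-1}>0$. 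Combining either with the reduction from the first step completes the derivation of $(q,2)\in N_m$.

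The one point that requires genuine verification, and which I expect to be the main (albeit routine) obstacle, is the extension from $\beta=1$ to an arbitrary $\beta\in\mathbb{F}_q^*$. For this I would inspect the character-sum machinery in \cite{Cohen2015}: the relevant bounds depend on polynomials of the form $\prod_{i=1}^m(x+(i-1)\beta)^{n_i}$, and replacing $1$ by any nonzero $\beta\in\mathbb{F}_q^*$ merely translates the $m$ distinct roots without affecting their number, the total degree, or the non-$r$-th-power property needed to apply the Weil-type estimate (Lemma \ref{cotamult}). Since these are the only structural features of the polynomial used in Cohen's argument, every inequality there carries over verbatim, and the two sufficient conditions above are valid for all $\beta\in\mathbb{F}_q^*$.
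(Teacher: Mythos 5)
Your proposal is correct and follows essentially the same route as the paper: reduce the normality requirement at $n=2$ to nothing via Proposition \ref{casen2}, invoke Cohen's Theorems 3 and 5 for the two bounds, and justify the passage from $\beta=1$ to arbitrary $\beta\in\mathbb{F}_q^*$ by noting that the character-sum estimates depend only on structural features of $\prod_{i=1}^m(x+(i-1)\beta)^{n_i}$ that are unchanged by rescaling the common difference. The paper states exactly this reasoning (in compressed form) as the content of the observation itself.
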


\section{Asymptotic results}\label{sectionasymp}

To apply Theorem \ref{principal}, in order to obtain asymptotic results, we need (among other results) an upper bound of $W(u)$. The following result was inspired
by \cite[Lemma 2.6]{lenstra}.

\begin{proposition}\label{boundW}
	Let $r$ be a positive integer,
	$p_1, \ldots , p_r$ be the list of the first $r$ prime numbers, and $P_r = p_1 \cdot \ldots \cdot p_r$
	be its product.
	For every positive integer $u \geq P_r$ we have $W(u) \leq u^t$, where $t$ is a real number
	satisfying $t \geq \frac{r \log 2}{\log P_r}$.
\end{proposition}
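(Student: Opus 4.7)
The plan is to reduce the inequality $W(u)\le u^t$ to a clean comparison of primorials. Since $W(u)=2^{\omega(u)}$, where $\omega(u)$ denotes the number of distinct prime divisors of $u$, it suffices to treat the smallest admissible exponent $t_{0} = \frac{r\log 2}{\log P_r}$; larger values of $t$ only weaken the claim. With this choice, the desired inequality $2^{\omega(u)} \le u^{t_{0}}$ is equivalent, after taking logarithms, to
\[
u \ge P_r^{\omega(u)/r}.
\]
Writing $k=\omega(u)$ and $u=q_1^{a_1}\cdots q_k^{a_k}$ with $q_1<\cdots<q_k$ primes, the whole problem collapses to verifying $u\ge P_r^{k/r}$ under the hypothesis $u\ge P_r$.

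I would then split into two cases according to how $k$ compares to $r$. If $k\le r$, then $P_r^{k/r}\le P_r\le u$ and the bound is immediate. If $k>r$, I would use the elementary observation that $q_i\ge p_i$ for every $i$ (the $i$-th smallest prime divisor of $u$ is at least the $i$-th prime overall) to obtain $u\ge p_1\cdots p_k = P_k$. Hence, in this case it is enough to verify the purely prime-theoretic inequality $P_k\ge P_r^{k/r}$, which, after taking logarithms, becomes
\[
\frac{\log P_k}{k}\ge\frac{\log P_r}{r}.
\]

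This last inequality is what I view as the only substantive step, although it is still routine. It expresses the monotonicity of the running arithmetic mean $\frac{1}{k}\sum_{i=1}^k\log p_i$ of the non-decreasing sequence $(\log p_i)_{i\ge 1}$: enlarging the averaging range from $\{1,\ldots,r\}$ to $\{1,\ldots,k\}$ introduces only terms $\log p_{r+1},\ldots,\log p_k$, each of which is at least as large as every term already present, so the average cannot decrease. To make this precise one rewrites
\[
r\log P_k - k\log P_r = r\sum_{i=r+1}^{k}\log p_i - (k-r)\sum_{i=1}^{r}\log p_i,
\]
bounds the first sum below by $(k-r)\log p_r$ on every term and the second sum above by $r\log p_r$ on every term, and observes that both contributions combine to a nonnegative quantity. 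I do not anticipate any genuine obstacle in this argument: the entire proof is elementary, consisting of the two case distinctions together with this final averaging inequality.
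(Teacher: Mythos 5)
Your proof is correct, and in the second case it takes a somewhat different route from the paper's. The paper argues multiplicatively: for $\omega(u)>r$ it factors $u=u_1u_2$ with $u_1$ carrying the $r$ smallest prime divisors of $u$, bounds $W(u_1)\le u_1^t$ via the first case (since $u_1\ge P_r$), and then absorbs each remaining prime $\tilde p\mid u_2$ one at a time using $2<\tilde p^{\,t}$, which follows from $P_r<\tilde p^{\,r}$. You instead take logarithms globally, reduce the claim to $u\ge P_r^{\omega(u)/r}$, lower-bound $u$ by the primorial $P_k$ with $k=\omega(u)$, and finish with the running-average inequality $\frac{1}{k}\log P_k\ge\frac{1}{r}\log P_r$. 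The two arguments rest on the same underlying fact --- any prime beyond the $r$ smallest contributes at least $\frac{1}{r}\log P_r=\frac{\log 2}{t_0}$ to $\log u$ --- but your version isolates it as a clean, self-contained statement about primorials (Chebyshev-sum monotonicity), at the cost of discarding the actual size of $u$ in favour of $P_k$; the paper's version keeps the true factorization of $u$ and leans on the multiplicativity of $W$. Both are complete and elementary, and all the steps you flag as routine (the equivalence after taking logs, $q_i\ge p_i$, and the averaging inequality) check out.
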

\begin{proof}
	Let $u\geq P_r$ be a positive integer. If $\omega(u) \leq r,$ then
	\[
	W(u)=2^{\omega(u)}\leq 2^r \leq P_r^t \leq u^t.
	\]
	If $\omega(u)>r$, write $u=u_1 u_2$, where the prime factors of $u_1$ are the first
	$r$ prime numbers which divide $u$, and $u_2$ is a positive integer such that $\gcd(u_1,u_2)=1$.
	Then $W(u_1)\leq u_1^t$, since $u_1 \geq P_r$ and $\omega(u_1)=r$. Observe also that
	for every prime factor $\tilde{p}$ of $u_2$ we have $2 < \tilde{p}^t$, since
	$2^r \leq P_r^t <\tilde{p}^{rt}$. This implies that $W(u_2) = 2^{\omega(u)-r}\leq u_2^t$. Thus
	$W(u)=W(u_1)W(u_2)\leq u_1^t u_2^t = u^t$.

\end{proof}

\begin{corollary} \label{boundW10}
	Let $u$ be a positive integer. If $u \ge 7.51 \cdot 10^{358}$, then $W(u) \le u^{\frac{1}{8}}.$ If $u\geq 1.39 \cdot 10^{1424},$ then $W(u) \leq u^{\frac{1}{10}}.$  If $u \ge 3.31 \cdot 10^{2821}$, then $W(u) \le u^{\frac{1}{11}}.$
\end{corollary}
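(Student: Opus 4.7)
The plan is to apply Proposition \ref{boundW} three times, one for each target exponent $t \in \{1/8, 1/10, 1/11\}$. In each case the task reduces to locating a positive integer $r$ such that
\[
\frac{r \log 2}{\log P_r} \le t,
\]
equivalently $P_r \ge 2^{r/t}$, and such that the resulting $P_r$ is at most the threshold stated in the corollary. Once such an $r$ is exhibited, Proposition \ref{boundW} immediately yields $W(u) \le u^t$ for every $u \ge P_r$, and a fortiori for every $u$ above the stated threshold.

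The key observation driving the choice of $r$ is that the quantity $\log P_r / r$, namely the average of $\log p_i$ for $i \le r$, is strictly increasing in $r$: indeed, the inequality $\log p_{r+1} > \log P_r / r$ holds because $p_{r+1}$ exceeds the geometric mean of $p_1, \dots, p_r$. Consequently, for each target $t$ there is a smallest $r_0 = r_0(t)$ for which the inequality $P_r \ge 2^{r/t}$ is satisfied, and this $r_0$ simultaneously minimizes $P_r$ among admissible $r$, giving the sharpest threshold. Using the prime number theorem $\log P_r = \vartheta(p_r) \sim p_r \sim r \log r$, the condition $\log P_r / r \ge (\log 2)/t$ forces $r$ of order roughly $150$, $470$, and $850$ in the three cases, consistent with the orders of magnitude $10^{358}$, $10^{1424}$, $10^{2821}$ appearing in the corollary.

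The remaining step is the numerical verification: compute $P_r$ as an exact multiple-precision integer for the candidate $r$, confirm $P_r \ge 2^{r/t}$, and confirm that $P_r$ is bounded above by the numerical threshold given. I would carry this out in SageMath by incrementing $r$ until the inequality $P_r \ge 2^{r/t}$ first becomes true, then reading off $P_r$ and checking it against $7.51 \cdot 10^{358}$, $1.39 \cdot 10^{1424}$, or $3.31 \cdot 10^{2821}$, which should in fact equal $P_{r_0(t)}$ rounded up to the displayed precision.

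The main obstacle is not mathematical but computational: the inequalities involved are elementary, but the magnitudes are astronomical (with $\log_{10} P_r$ reaching roughly $2821$ in the last case). However, because $1/t$ is an integer in each case, the condition $P_r \ge 2^{r/t}$ is a comparison of two explicit integers and can be verified exactly with arbitrary-precision arithmetic, avoiding any floating-point concerns. With this verification in hand, the three bounds are immediate from Proposition \ref{boundW}.
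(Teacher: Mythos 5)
Your proposal is correct and follows essentially the same route as the paper: the paper likewise applies Proposition \ref{boundW} three times, with the explicit choices $r=149$, $r=473$, and $r=852$, checking in each case that $P_r$ lies below the stated threshold and that $\frac{r\log 2}{\log P_r}$ is at most $\frac18$, $\frac1{10}$, $\frac1{11}$ respectively. Your reduction to the integer comparison $P_r \ge 2^{r/t}$ and your monotonicity remark correctly justify the search procedure; only the final numerical values of $r$ remain to be filled in.
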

\begin{proof}
	If we consider $r=149$, we get $P_{149}<7.51 \cdot 10^{358}$ and $\frac{1}{8} > \frac{149 \log 2}{\log P_{149}}$.
	If we consider $r=473$, we get $P_{473}<1.39 \cdot 10^{1424}$ and $\frac{1}{10} > \frac{473 \log 2}{\log P_{473}}$.
	If we consider $r=852$, we get $P_{852}<3.31 \cdot 10^{2821}$ and $\frac{1}{11} > \frac{852 \log 2}{\log P_{852}}$. The result follows directly from Proposition \ref{boundW}.
\end{proof}

From Corollary \ref{boundW10} we get the first asymptotic results for $m=2$ and for $m=3$. Before let us recall some results.

\begin{lemma}\label{lem:3.7}
Let $q$ be a prime power, and let $n$ be a positive integer. The number of monic
irreducible factors of
$x^n-1$ over $\mathbb{F}_q$ is at most 
$\frac{n}{a}+b,$ where
the pair $(a,b)$ can be chosen among the following pairs:
\begin{align*}
&(1,0), \quad \left(2,\frac{q-1}2\right), \quad \left(3, \frac{q^2+3q-4}{6}\right), \\ 
&\left(4,\frac{q^3+3q^2+5q-9}{12}\right), \quad \left(5,\frac{3q^4+8q^3+15q^2+22q-48}{60}\right).
\end{align*}
\end{lemma}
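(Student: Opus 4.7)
The plan is to interpret the monic irreducible factors of $x^n-1$ over $\mathbb{F}_q$ in terms of the orbit structure of the multiplication-by-$q$ action on $\mathbb{Z}/n\mathbb{Z}$. Writing $n = p^s m$ with $p = \mathrm{char}(\mathbb{F}_q)$ and $\gcd(m,p)=1$, one has $x^n - 1 = (x^m - 1)^{p^s}$, so the distinct monic irreducible factors of $x^n - 1$ coincide with those of $x^m - 1$; since $m \le n$, I may assume throughout that $\gcd(n, q) = 1$. Under this assumption the roots of $x^n - 1$ are the distinct $n$-th roots of unity, and Galois orbits of roots under the Frobenius $\alpha \mapsto \alpha^q$ correspond exactly to orbits of $i \in \mathbb{Z}/n\mathbb{Z}$ under multiplication by $q$. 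Thus the quantity to bound is the number $T$ of such orbits.

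Denote by $k_s$ the number of orbits of size exactly $s$, so that $T = \sum_s k_s$ and $\sum_s s k_s = n$. The key idea is the elementary splitting
\[
T = \sum_{s<a} k_s + \sum_{s \ge a} k_s \le \sum_{s<a} k_s + \frac{n - \sum_{s<a} s k_s}{a} = \frac{n}{a} + \frac{1}{a}\sum_{s<a}(a-s)\, k_s,
\]
which reduces the lemma to proving $\sum_{s<a}(a-s)\, k_s \le a b$ for each of the five prescribed pairs $(a,b)$.

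To bound the small counts $k_s$, I would observe that the orbit of $i$ has size dividing $s$ precisely when $n \mid i(q^s - 1)$, yielding $\gcd(n, q^s - 1) \le q^s - 1$ such elements; counting them orbit by orbit gives the identity $\sum_{s' \mid s} s'\, k_{s'} \le q^s - 1$. Specialising to $s = 1,2,3,4$ produces
\[
k_1 \le q-1, \quad k_2 \le \tfrac{1}{2}(q^2-1-k_1), \quad k_3 \le \tfrac{1}{3}(q^3-1-k_1), \quad k_4 \le \tfrac{1}{4}(q^4-1-k_1-2k_2).
\]
For each target $(a,b)$ one substitutes these upper bounds into $\sum_{s<a}(a-s)\, k_s$, eliminating the $k_s$ in decreasing order of $s$ and finally using $k_1 \le q-1$: the cases $a=1$ and $a=2$ are immediate, and an elementary arithmetic calculation handles $a=3,4,5$.

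The main obstacle is the bookkeeping for $a = 5$, where one must verify that after substituting the bounds for $k_4$, $k_3$, and $k_2$ the coefficient of $k_1$ remains positive (a direct calculation shows it equals $11/6$), so that replacing $k_1$ by $q-1$ is legitimate and produces exactly $(3q^4 + 8q^3 + 15q^2 + 22q - 48)/12$ as required. Apart from this careful ordering of substitutions, the whole argument is mechanical.
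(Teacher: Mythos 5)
Your proposal is correct and follows essentially the same route as the paper: your splitting inequality $T \le \frac{n}{a} + \frac{1}{a}\sum_{s<a}(a-s)k_s$ is exactly the bound $j \le \frac{n+(t+1)s_{n,t}-T_{n,t}}{t+1}$ (with $a=t+1$) that the paper imports from \cite[Lemma 3.7]{AN}, and your bounds on the $k_s$ via $\sum_{s'\mid s}s'k_{s'}\le q^s-1$ reproduce the paper's counts of low-degree irreducible factors, leading to the same extremal values. The only difference is that you derive the key inequality self-containedly from the Frobenius-orbit description instead of citing it, which is a presentational rather than a mathematical distinction.
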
	
\begin{proof}
Let $s_{n,t}$ be the number of distinct monic irreducible polynomials of degree at most $t$ that divide 
$x^n-1,$ and let $T_{n,t}$ be the sum of their degrees.
From \cite[Lemma 3.7]{AN}, we have $W(x^n-1) = 2^j,$ where
\[
j \le \frac{n+(t+1)s_{n,t}-T_{n,t}}{t+1},
\]
and the right-hand side of the expression above maximizes when $s_{n,t}$ is maximal.
Since the number of monic polynomials of degree $i$ which divide $x^n-1$ is at most 
the numbers of elements of $\mathbb{F}_{q^i}$, which are not in  $\mathbb{F}_{q^j}$,
for every divisor $j$ of $i$, divided by $i$, we have that
for $t=0$ we may choose $a=1$ and $b=0$,
for $t=1$ the maximum value of
$s_{n,t}$ is $q-1$ and in this case $T_{n,t}=q-1$ (so $a=2$ and $b=\frac{q-1}{2}$), for $t=2$
the maximum value of
$s_{n,t}$ is $q-1+ \frac{q^2-q}{2}=\frac{q^2+q-2}{2}$ and in this case $T_{n,t}=q-1+q^2-q=q^2-1$
(so $a=3$ and $b=\frac{q^2+3q-4}{6}$), for $t=3$
the maximum value of
$s_{n,t}$ is $q-1+ \frac{q^2-q}{2}+\frac{q^3-q}{3}=\frac{2q^3+3q^2+q-6}{6}$ and in this case
$T_{n,t}=q-1+q^2-q+q^3-q=q^3+q^2-q-1$
(so $a=4$ and $b=\frac{q^3+3q^2+5q-9}{12}$), and for $t=4$
the maximum value of
$s_{n,t}$ is $q-1+ \frac{q^2-q}{2}+\frac{q^3-q}{3}+\frac{q^4-q^2}{4}=\frac{3q^4+4 q^3+3q^2+2q-12}{12}$ and in this case
$T_{n,t}=q-1+q^2-q+q^3-q+q^4-q^2=q^4+q^3-q-1$
(so $a=5$ and $b=\frac{3q^4+8q^3+15q^2+22q-48}{60}$).

\end{proof}

From the previous results, we obtain weaker versions of Theorem \ref{Teorema1}(i),(ii).

\begin{proposition}\label{bound2-1}
Let $q$ be a prime power, and $n \ge 2$ be an integer. If $q^n \geq 7.51\cdot 10^{358},$ then
$(q,n)\in N_2$.
\end{proposition}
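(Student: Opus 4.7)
The plan is to apply Theorem \ref{principal} with $m=2$, $\bar e = \overline{q^n-1}$ and $g = x^n-1$; this reduces the task to checking
\[
q^{n/2} \ge 2\,W(x^n-1)\,W(q^n-1)^2.
\]
Since $q^n-1$ lies above the threshold of Corollary \ref{boundW10}, that corollary gives $W(q^n-1) \le q^{n/8}$, so the problem further reduces to
\[
q^{n/4} \ge 2\,W(x^n-1).
\]
All that remains is to bound $W(x^n-1)$, which I would do by Lemma \ref{lem:3.7}, picking the pair $(a,b)$ according to the size of $q$.

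For $q \ge 64$, the trivial pair $(a,b)=(1,0)$ already suffices: $W(x^n-1) \le 2^n$, so $2\,W(x^n-1) \le 2^{n+1} \le 2^{3n/2} \le q^{n/4}$ for every $n \ge 2$. The case $n=2$ is absorbed here, since $q^2 \ge 7.51\cdot 10^{358}$ forces $q$ to be astronomically larger than $64$. For $5 \le q \le 63$, I would take $(a,b)=\left(2,\tfrac{q-1}{2}\right)$, which turns the required inequality into an explicit lower bound of the form $n \ge 2(q+1)\log 2/\log(q/4)$; this is well below the bound $n \ge \log(7.51\cdot 10^{358})/\log q$ forced by the hypothesis, as a $q$-by-$q$ check confirms. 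The remaining cases $q\in\{2,3,4\}$ require a larger $a$: I would take $(a,b)=\left(5,\tfrac{3q^4+8q^3+15q^2+22q-48}{60}\right)$ for $q=2$ and $(a,b)=\left(4,\tfrac{q^3+3q^2+5q-9}{12}\right)$ for $q\in\{3,4\}$, and again observe that the hypothesis forces $n$ to be at least several hundred, comfortably above the corresponding thresholds.

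The main obstacle is not any individual estimate --- the overall plan merely threads together Theorem \ref{principal}, Corollary \ref{boundW10} and Lemma \ref{lem:3.7} --- but rather the bookkeeping required to ensure that, for each small $q$, the chosen pair $(a,b)$ is feasible (i.e.\ satisfies $a\log q > 4\log 2$) and yields a threshold on $n$ below the one dictated by $q^n \ge 7.51\cdot 10^{358}$. This is a finite, completely mechanical check, but one has to take care that no $q<64$ slips through the cracks.
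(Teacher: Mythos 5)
Your proposal is correct and follows essentially the same route as the paper: apply Theorem \ref{principal} with $m=2$, use Corollary \ref{boundW10} to bound $W(q^n-1)\le q^{n/8}$, and then control $W(x^n-1)$ via Lemma \ref{lem:3.7} with a finite case split on $q$. The only differences are cosmetic choices of the pairs $(a,b)$ and where to cut the ranges of $q$ (the paper uses $(1,0)$ already for $q\ge 17$ by exploiting the size hypothesis, and $(3,\cdot)$ rather than $(4,\cdot)$ for $q\in\{3,4\}$), all of which lead to the same conclusion.
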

\begin{proof}
From the previous lemma, there exist non-negative integers $a,b$
depending on $q$ such that
$W(x^n-1) \leq 2^{\frac{n}{a}+b}$. 
From Corollary \ref{boundW10},
we have
$2W(x^n-1)W(q^n-1)^2 \leq 2 \cdot 2^{\frac{n}{a}+b} \cdot q^{\frac{n}{4}}$. 
This proposition follows provided $q^{\frac{n}{2}} \geq 2 \cdot 2^{\frac{n}{a}+b} \cdot q^{\frac{n}{4}}$,
which is equivalent to
$\left(\frac{q}{2^{\frac 4a}}\right)^n \geq 2^{4b+4}$.
Let $c$ be a positive integer, and suppose that $q \geq c = 2^{\log_2 c}$. 
If $1-\frac{4}{a} \log_c 2>0$, we get that
\[
\frac{q}{2^{\frac 4a}} \geq q^{1-\frac{4}{a} \log_c 2}.
\]
So, let us verify if 
\begin{equation}\label{ineqm20}
\left(q^n \right)^{1-\frac{4}{a} \log_c 2} \geq 2^{4b+4}
\end{equation}
holds, where $1-\frac{4}{a} \log_c 2>0$. If $c=17$, $a=1$ and $b=0$, Inequality \eqref{ineqm20} holds, since 
$q^n \geq 7.51\cdot 10^{358}$.
For $5 \le 	q \le 16$
Inequality \eqref{ineqm20} holds if we choose $c=q$, $a=2$ and $b=\frac{q-1}{2}$,
according to Lemma \ref{lem:3.7}. Similarly, Inequality \eqref{ineqm20} holds for 
$(q,a,b,c) \in \{(2,5,\frac{14}{5},2) \} \cup \{(q,3,\frac{q^2+3q-4}{6},q) \mid q \in \{ 3,4\} \}.$
We conclude the proof from Theorem \ref{principal}.
\end{proof}

\begin{proposition}\label{bound3-1}
	Let $q$ be an odd prime power and $n \ge 2$ be an integer. If $q^n \geq 1.39 \cdot 10^{1424},$ then
	$(q,n)\in N_3$.
\end{proposition}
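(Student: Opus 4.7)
The plan is to follow the same template as the proof of Proposition \ref{bound2-1}, adapted for $m=3$. By Theorem \ref{principal}, it suffices to show that $q^{n/2} \geq 3 W(x^n-1) W(q^n-1)^3$. The bound on $W(q^n-1)$ is the key change: since $q^n \geq 1.39 \cdot 10^{1424}$, Corollary \ref{boundW10} yields $W(q^n-1) \leq (q^n-1)^{1/10} < q^{n/10}$, so $W(q^n-1)^3 < q^{3n/10}$. Combining this with $W(x^n-1) \leq 2^{n/a+b}$ from Lemma \ref{lem:3.7}, the inequality reduces to
\[
q^{n/5} \geq 3 \cdot 2^{n/a+b},
\]
which, after assuming $q \geq c$ for some positive integer $c$ with $1 - \frac{5}{a}\log_c 2 > 0$, is implied by
\begin{equation}\label{ineqm30}
(q^n)^{1-\frac{5}{a}\log_c 2} \geq 3^5 \cdot 2^{5b}.
\end{equation}

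Then I would partition the odd prime powers into ranges and pick $(a,b,c)$ appropriately from the list in Lemma \ref{lem:3.7}. For $q \geq 37$, the pair $(a,b) = (1,0)$ with $c=37$ works, since $5 \log_{37} 2 < 1$; here \eqref{ineqm30} becomes $(q^n)^{1-5\log_{37} 2} \geq 243$, which holds by a wide margin given $q^n \geq 1.39 \cdot 10^{1424}$. For the remaining odd prime powers $q \in \{7,9,11,13,17,19,23,25,27,29,31\}$, I would take $c = q$, $a = 2$, $b = (q-1)/2$, which is admissible since $q^2 \geq 49 > 32 = 2^5$. For $q = 5$, I would use $(a,b,c) = (3,6,5)$ (valid because $5^3 > 32$), and for $q = 3$ I would use $(a,b,c) = (4,5,3)$ (valid because $3^4 > 32$). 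In each case a one-line numerical check confirms that \eqref{ineqm30} is satisfied for all $q^n \geq 1.39 \cdot 10^{1424}$ (in fact, much smaller values of $q^n$ suffice, and the binding constraint is the one from Corollary \ref{boundW10}).

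The conclusion then follows immediately from Theorem \ref{principal}. The main obstacle here is essentially bookkeeping: verifying that the choice of $(a,b,c)$ is admissible (i.e., $1 - \frac{5}{a}\log_c 2 > 0$) and that \eqref{ineqm30} holds for every relevant small odd prime power. Unlike the $m=2$ case, the exponent $5/a$ replaces $4/a$, so the threshold for using the trivial $(a,b)=(1,0)$ estimate jumps from $q \geq 17$ to $q \geq 33$, forcing one additional range (the $a=2$ bracket) to extend up to $q=31$; everything else proceeds mutatis mutandis.
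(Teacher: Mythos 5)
Your proposal is correct and follows essentially the same template as the paper's proof: reduce via Theorem \ref{principal} and Corollary \ref{boundW10} to the inequality $\left(q^n\right)^{1-\frac{5}{a}\log_c 2}\ge 3^5\cdot 2^{5b}$, then choose $(a,b,c)$ from Lemma \ref{lem:3.7} according to the size of $q$; your choices for $q\ge 37$, $q=5$, and $q=3$ coincide with the paper's. The only divergence is the middle range $7\le q\le 31$: the paper there does not use Lemma \ref{lem:3.7} but instead observes that $n\ge 954$ forces $\gcd(n,q-1)<\frac{n}{30}$ and invokes the Lenstra--Schoof estimate to get $W(x^n-1)\le 2^{31n/60}$ (i.e.\ $a=\frac{60}{31}$, $b=0$, $c=7$), whereas you use the $(a,b)=(2,\frac{q-1}{2})$ bracket with $c=q$, which is admissible since $q^2>32$ for $q\ge 7$ and yields, in the worst cases ($q=7$ with exponent $\approx 0.109$, or $q=31$ with right-hand side $3^5\cdot 2^{75}\approx 9.2\cdot 10^{24}$), a left-hand side of at least $10^{155}$ --- so your check goes through with enormous margin and is, if anything, slightly more self-contained.
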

\begin{proof}
	From Lemma \ref{lem:3.7}, there exist non-negative integers $a,b$
	depending on $q$ such that
	$W(x^n-1) \leq 2^{\frac{n}{a}+b}$.
	From Corollary \ref{boundW10}, we have
	$3W(x^n-1)W(q^n-1)^3 \leq 3 \cdot 2^{\frac{n}{a}+b} \cdot q^{\frac{3n}{10}}$. This proposition follows provided $q^{\frac{n}{2}} \geq 3 \cdot 2^{\frac{n}{a}+b} \cdot q^{\frac{3n}{10}}$, which is equivalent to
	$\left(\frac{q}{2^{\frac 5a}}\right)^n \geq 3^5\cdot 2^{5b}$.
	Let $c$ be a positive integer, and suppose that $q \geq c = 2^{\log_2 c}$. 
	If $1-\frac{5}{a} \log_c 2>0$, we get that
	\[
	\frac{q}{2^{\frac 5a}} \geq q^{1-\frac{5}{a} \log_c 2}.
	\]
	So, let us verify if 
	\begin{equation}\label{ineq0}
		\left(q^n \right)^{1-\frac{5}{a} \log_c 2} \geq 3^5 \cdot 2^{5b}
	\end{equation}
	holds, where $1-\frac{5}{a} \log_c 2>0$. If $c=37$, $a=1$ and $b=0$, Inequality \eqref{ineq0} holds, since 
	$q^n \geq 1.39 \cdot 10^{1424}$. Let $c=7$, and suppose that $q\leq 31$.
	As $q^n \geq 1.39 \cdot 10^{1424}$, 
	we have $n \geq \frac{\log(1.39 \cdot 10^{1424})}{\log 31}> 954$. So,
	$\gcd(n,q-1)\le q-1 \le \frac{30}{954}n <\frac{1}{30}n,$ and
	from \cite[Equation (2.10)]{lenstra} we get $W(x^n-1)\leq 2^{\frac{31n}{60}}$, which means that we may choose
	$a=\frac{60}{31}$ and $b=0$. In this case Inequality \eqref{ineq0} also holds.
	For $q=5$, from Lemma \ref{lem:3.7} we may choose $c=5$, $a=3$ and $b=6$, and
	Inequality \eqref{ineq0} holds. 
	Finally, for $q=3$ from
	Lemma \ref{lem:3.7} we may choose $c=3$, $a=4$ and $b=5$, and
	Inequality \eqref{ineq0} holds.	
	We conclude the proof from Theorem \ref{principal}.

\end{proof}

We may also use Proposition \ref{boundW} in order to get  asymptotic result for $m=4$, which proves Theorem \ref{Teorema1}(iii).

\begin{proposition}\label{bound4-1}
	Let $q$ be an odd prime power, and $n \ge 2$ be an integer. If $q^n \geq 3.31 \cdot 10^{2821},$ then
	$(q,n)\in N_4$.
\end{proposition}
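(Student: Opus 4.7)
The plan is to parallel the arguments of Propositions \ref{bound2-1} and \ref{bound3-1}. By Theorem \ref{principal} with $m=4$, it suffices to verify the inequality
\[
q^{n/2} \ge 4\,W(x^n-1)\,W(q^n-1)^4.
\]
The hypothesis $q^n \ge 3.31\cdot 10^{2821}$, together with Corollary \ref{boundW10}, gives $W(q^n-1) \le q^{n/11}$, so $W(q^n-1)^4 \le q^{4n/11}$. Writing $W(x^n-1) \le 2^{n/a+b}$ for one of the pairs $(a,b)$ furnished by Lemma \ref{lem:3.7}, and using $n/2-4n/11=3n/22$, the problem reduces to
\[
q^{3n/22} \ge 2^{n/a+b+2}.
\]
Exactly as in the proofs of Propositions \ref{bound2-1} and \ref{bound3-1}, if $q \ge c$ and $1-\tfrac{22}{3a}\log_c 2 > 0$, this is implied by
\[
(q^n)^{1-\frac{22}{3a}\log_c 2} \ge 2^{22(b+2)/3}.
\]

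Next, I would split into cases on $q$. Since $2^{22/3}\approx 161.3$, for every odd prime power $q \ge 163$ the choice $(a,b)=(1,0)$ with a cutoff $c$ slightly above $2^{22/3}$ (for instance $c=163$) makes the exponent $1-\tfrac{22}{3}\log_c 2$ positive, and the assumption $q^n \ge 3.31\cdot 10^{2821}$ produces the required inequality with ample slack. For the finitely many remaining values $q \in \{3,5,7,9,11,\ldots,161\}$, I would instead choose the most favourable pair from Lemma \ref{lem:3.7} according to $q$ --- for instance $(a,b)=(2,(q-1)/2)$ for $q \ge 13$, $(3,(q^2+3q-4)/6)$ for $q \in \{7,9,11\}$, $(4,(q^3+3q^2+5q-9)/12)$ for $q=5$, and $(5,(3q^4+8q^3+15q^2+22q-48)/60)$ for $q=3$ --- paired with $c=q$. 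The condition $1-\tfrac{22}{3a}\log_q 2 > 0$ then reduces to $a > 22/(3\log_2 q)$, satisfied by the chosen $a$ in each subcase, so the inequality simplifies to
\[
n \;\ge\; \frac{b+2}{\tfrac{3}{22}\log_2 q - \tfrac{1}{a}},
\]
which one checks is dwarfed by the lower bound $n \ge \log(3.31\cdot 10^{2821})/\log q$ forced by the hypothesis.

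The main obstacle is that for the smallest values $q=3$ and $q=5$ the quantity $\tfrac{3}{22}\log_2 q - \tfrac{1}{a}$ is only positive for the largest values of $a$ offered by Lemma \ref{lem:3.7}, where the intercept $b$ is no longer negligible (for instance $b \ge 11$ when $q=3$, $a=5$). One must therefore verify by direct numerical comparison that the resulting lower bound on $n$ is still comfortably smaller than $\log(3.31\cdot 10^{2821})/\log q$. These routine but essential calculations close the proof via Theorem \ref{principal}.
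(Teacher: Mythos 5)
Your proposal is correct and follows essentially the same route as the paper: Theorem \ref{principal} with $m=4$, Corollary \ref{boundW10} to get $W(q^n-1)\le q^{n/11}$, Lemma \ref{lem:3.7} for $W(x^n-1)$, and the same reduction to $(q^n)^{1-\frac{22}{3a}\log_c 2}\ge 2^{22(b+2)/3}$ with a case split at $q=163$. The only (harmless) divergence is in the range $19\le q\le 157$, where the paper invokes Lenstra--Schoof's Equation (2.10) to take $a=16/9$, $b=0$, while your choice $(a,b)=(2,(q-1)/2)$ with $c=q$ also verifies the inequality.
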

\begin{proof}
	From Lemma \ref{lem:3.7}, there exist non-negative integers $a,b$
	depending on $q$, such that
	$W(x^n-1) \leq 2^{\frac{n}{a}+b}$.
	From Proposition \ref{boundW}, we have
	$4W(x^n-1)W(q^n-1)^4 \leq 4 \cdot 2^{\frac{n}{a}+b} \cdot q^{\frac{4n}{11}}$. This proposition follows provided $q^{\frac{n}{2}} \geq 4 \cdot 2^{\frac{n}{a}+b} \cdot q^{\frac{4n}{11}}$, which is equivalent to
	$\left(\frac{q}{2^{\frac{22}{3a}}}\right)^n \geq 2^{\frac{22(b+2)}{3}}$.
	Let $c$ be a positive integer, and suppose that $q \geq c = 2^{\log_2 c}$. If $1-\frac{22}{3a} \log_c 2>0$, we get that
	\[
	\frac{q}{2^{\frac{22}{3a}}} \geq q^{1-\frac{22}{3a} \log_c 2}.
	\]
	So, let us verify if 
	\begin{equation}\label{ineq}
		\left(q^n \right)^{1-\frac{22}{3a} \log_c 2} \geq2^{\frac{22(b+2)}{3}}
	\end{equation}
	holds, where $1-\frac{22}{3a} \log_c 2>0$. If $c=163$, $a=1$ and $b=0$, Inequality \eqref{ineq} holds, since 
	$q^n \geq 3.31 \cdot 10^{2821}$. Let $c=19$ and suppose that $q\leq 157$.
	As $q^n \geq 3.31 \cdot 10^{2821}$, we have $n \geq \frac{3.31 \cdot 10^{2821}}{\log 157}>1284$. So,
	$\gcd(n,q-1)\le q-1 \le \frac{156}{1285}n <\frac{1}{8}n,$ and
	from \cite[Equation (2.10)]{lenstra} we get $W(x^n-1)\leq 2^{\frac{9n}{16}}$, which means that we may choose
	$a=\frac{16}{9}$ and $b=0$. In this case, Inequality \eqref{ineq} also holds.
	For $q = 17$, consider $c=17$ and from Lemma \ref{lem:3.7}, we may choose $a=2$ and $b=8$.
	In this case, Inequality \eqref{ineq} holds.
	From Lemma \ref{lem:3.7}, we may choose $a=3$ and $b=\frac{q^2+3q-4}{6}$. For
	$c=q$ and $q\in \{7,9,11,13 \},$ Inequality \eqref{ineq} holds. For $q=5$, again from Lemma \ref{lem:3.7} we may choose $c=5$, $a=4$ and $b=18$, and
	Inequality \eqref{ineq} holds.
	Finally, for $q=3$, from Lemma \ref{lem:3.7}, we may choose $c=3$, $a=5$ and $b=\frac{51}{5}$, and
	Inequality \eqref{ineq} holds.
	We conclude the proof from Theorem \ref{principal}.

\end{proof}


	
For $m > 4$ we have the following result, which proves Theorem \ref{Teorema1}(iv).
\begin{proposition}\label{boundm}
Let $q$ be an odd prime power, and $n,m$ be positive integers with $m \ge 5$. 
There exist positive constants $c(m)$ and $q_0(m)$ depending on $m$ such that
if $q \ge c(m)$ and $q^n \ge q_0(m),$ then $(q,n) \in N_m$.
\end{proposition}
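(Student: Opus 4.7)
The plan is to apply Theorem \ref{principal} along the same lines as Propositions \ref{bound2-1}, \ref{bound3-1}, and \ref{bound4-1}, the only modification being that the exponent extracted from $W(q^n-1)$ via Proposition \ref{boundW} is now allowed to depend on $m$. The goal is to verify the sufficient inequality $q^{n/2} \ge m\, W(x^n-1)\, W(q^n-1)^m$ for all $(q,n)$ with $q \ge c(m)$ and $q^n \ge q_0(m)$.

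First I would fix $t = t(m) = 1/(4m)$, so that $mt = 1/4$ is safely less than $1/2$. Proposition \ref{boundW} then supplies a positive integer $U_0(m)$ such that $W(u) \le u^t$ for every $u \ge U_0(m)$; taking $q_0(m) := U_0(m) + 1$, we have $W(q^n-1)^m \le q^{nmt} = q^{n/4}$ whenever $q^n \ge q_0(m)$. For the polynomial side, I would use the trivial pair $(a,b) = (1,0)$ of Lemma \ref{lem:3.7} to get $W(x^n-1) \le 2^n$; there is no need for the more delicate case analysis over better pairs $(a,b)$ used in Propositions \ref{bound2-1}--\ref{bound4-1}, because the statement only asserts the existence of constants and $c(m)$ can absorb any finite amount of slack. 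Substituting both bounds, the condition of Theorem \ref{principal} reduces to
\[
q^{n/2} \ge m \cdot 2^n \cdot q^{n/4}, \qquad \text{equivalently,} \qquad \left(\frac{q^{1/4}}{2}\right)^{n} \ge m.
\]

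To conclude, I would set $c(m) := 16m^{2}$, so that $q \ge c(m)$ forces $q^{1/4}/2 \ge m^{1/2}$, and therefore $\left(q^{1/4}/2\right)^{n} \ge m^{n/2} \ge m$ for every $n \ge 2$, as required. The only real \emph{calibration} (rather than obstacle) is the choice of $t$: it must be small enough that $1/2 - mt$ remains bounded away from zero uniformly in $n$ (so the condition on $q$ does not deteriorate as $n$ grows), while strictly positive so that Proposition \ref{boundW} produces a finite threshold $U_0(m)$. The value $t = 1/(4m)$ is convenient; any $t \in (0,\,1/(2m))$ would work and merely change the explicit form of $c(m)$ and $q_0(m)$. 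Beyond this bookkeeping, no genuine analytical difficulty arises.
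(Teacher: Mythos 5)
Your proof is correct and follows essentially the same route as the paper: apply Theorem \ref{principal} with the trivial bound $W(x^n-1)\le 2^n$ and Proposition \ref{boundW} with an $m$-dependent exponent on $W(q^n-1)$, then absorb the slack into $c(m)$ and $q_0(m)$. The only differences are cosmetic choices of constants ($t=1/(4m)$ and $c(m)=16m^2$ versus the paper's $1/(2(m+1))$ and $2^{4(m+1)}$), and your final verification is in fact more direct than the paper's detour through the prime-counting function.
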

\begin{proof}
From Proposition \ref{boundW}, there exists a contant $q_0(m)$ such that for every $u\ge q_0(m)$
we have
$W(u)\le u^{\frac{1}{2(m+1)}}$, and in this case we have
$mW(x^n-1)W(q^n-1)^m \leq m \cdot 2^{n} \cdot q^{\frac{mn}{2(m+1)}}$.
Since 
$q^{\frac{n}{2}} \geq m \cdot 2^{n} \cdot q^{\frac{mn}{2(m+1)}}$ is equivalent to
$\left(\frac{q}{2^{2(m+1)}}\right)^n \geq m^{2(m+1)}$, 
from Theorem \ref{principal}, if the last inequality holds, then
$(q,n)\in N_m$. We will prove that
if $q \ge c(m)=2^{4(m+1)},$ then $\left(\frac{q}{2^{2(m+1)}}\right)^n \geq m^{2(m+1)}$.

From Proposition \ref{boundW}, we may choose $q_0(m)=P_r,$ where $r$ is a positive integer such that
$\frac{1}{2(m+1)} \geq \frac{r \log 2}{\log P_r}$, thus $P_r \ge 2^{2r(m+1)}$. This implies
that the $r$-th prime $p_r$ satisfies $p_r \ge 2^{2(m+1)}>m^2$. Let $\pi(x)$ be
the number of prime numbers 
up to $x$. From \cite[Theorem 4.6]{Apostol}, 
we have $\pi(2^r)> \frac{2^r}{6\log 2^r} >r$ since $r>8$ (in fact, from Proposition \ref{bound4-1},
for $m \ge 5$ we have $r \ge 852$).
In particular $2^r>p_r>m^2$. Putting all this together, for $q \ge c(m)$, we get that 
\[
\left(\frac{q}{2^{2(m+1)}}\right)^n \ge q^{\frac n2} \ge  P_r^{\frac12} \ge 2^{r(m+1)}>m^{2(m+1)}.
\]
\end{proof}

\section{The case $m=3$}\label{sec:m=3}

From now on we concentrate on the case $m=3$.
We will use the sieving technique in order to decrease the bound of Proposition \ref{bound3-1}.
Before, we need some notations. For $k,r \in \mathbb N$, let $\omega(k)$ be the number of primes dividing $k$, $P(k,r)$ be the product of the first $r$ primes greater than or equal to $k,$ and $S(k,r)$ be the sum of the inverses of these primes.

\begin{proposition}\label{bound3-2}
Let $q$ be an odd prime power, and $n \ge 2$ be an integer. If $q^n \geq 3.422\cdot 10^{40}$
or if $q \ge 37$ and $ q^n \ge 7.391\cdot 10^{38},$
then
$(q,n)\in N_3.$
\end{proposition}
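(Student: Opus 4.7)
The plan is to apply the sieving bound of Proposition~\ref{sieve-prop} with $m=3$. The substantial gain over Proposition~\ref{bound3-1} comes from sieving out the large prime divisors of $q^n-1$, which replaces the factor $W(q^n-1)^3$ by a much smaller $W(e)^3$; when $q$ is large we can additionally sieve the polynomial side. Concretely, the sieve inequality to verify is
\[
q^{n/2}\ \ge\ 3\,\Delta\,W(g)\,W(e)^3,\qquad \Delta = 2 + \frac{3r+s-1}{\delta},
\]
where $\delta = 1 - 3\sum_{i=1}^{r}\frac{1}{p_i} - \sum_{i=1}^{s}\frac{1}{q^{\deg h_i}}>0$.

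I would parametrize the prime-side sieve by a threshold $k$: let $e$ be the largest divisor of $q^n-1$ whose prime factors are all smaller than $k$, so that the sieved primes $p_1,\dots,p_r$ are exactly the primes of $q^n-1$ that are at least $k$. Then $W(e)\le 2^{\pi(k-1)}$, a bound independent of $n$, and $\sum_{i=1}^{r}1/p_i\le S(k,r)$. The number $r$ of sieved primes is bounded a priori by the largest integer for which $P(k,r)\le q^n-1$, since these primes multiply to a divisor of $q^n-1$; this constraint is the sole place where a lower bound on $q^n$ enters the argument, and it is what ultimately forces the thresholds stated. For the first bound, valid for all odd $q$, take $g = x^n-1$, so that $s=0$ and $W(g)\le 2^{n/a+b}$ with $(a,b)$ chosen from Lemma~\ref{lem:3.7} according to the range of $q$, along the lines of the proof of Proposition~\ref{bound3-1}. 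For the sharper bound in the regime $q\ge 37$, I would also sieve the polynomial side, taking $g$ to be a small divisor of $x^n-1$ (say the product of a few low-degree irreducibles) and putting the remaining monic irreducible factors of $x^n-1$ among the $h_i$; since $1/q^{\deg h_i}\le 1/37$, the contribution to $\delta$ is negligible while the saving in $W(g)$ is substantial.

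After taking logarithms, the sieve inequality becomes linear in $n\log q$, and numerically optimizing over $k$ (and, in the second case, over the split of the irreducibles of $x^n-1$ between $g$ and the $h_i$'s) gives the stated thresholds $3.422\cdot 10^{40}$ and $7.391\cdot 10^{38}$. The main obstacle is precisely this numerical optimization: one must balance the fixed bound $W(e)\le 2^{\pi(k-1)}$ against the decrease of $\delta$ and the growth of the combinatorial factor $3r+s-1$ in $\Delta$, while simultaneously handling several ranges of small $q$ with the different $(a,b)$ choices afforded by Lemma~\ref{lem:3.7}. The consistency constraint $P(k,r)\le q^n-1$, together with the requirement $\delta>0$, is what pins down the optimal $k$ for each range of $q$ and translates into the lower bound on $q^n$ that appears in the statement.
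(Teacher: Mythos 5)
Your overall strategy --- Proposition~\ref{sieve-prop} with $m=3$, sieving away the primes of $q^n-1$ above a threshold $k$ so that $W(e)\le 2^{\pi(k-1)}$, bounding $r$ via $P(k,r)\le q^n-1$, and keeping $g=x^n-1$ with the $W(x^n-1)\le 2^{n/a+b}$ bounds of Lemma~\ref{lem:3.7} --- is exactly the paper's starting point. But there are two genuine gaps.

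First, a \emph{single} application with one optimized $k$ cannot reach $3.422\cdot 10^{40}$. The only a priori upper bound on $q^n$ available at the outset is the $1.39\cdot 10^{1424}$ of Proposition~\ref{bound3-1}, and with that bound the constraint $P(k,r)\le q^n-1$ is very weak: for small $k$ (say $k=31$) it permits several hundred sieved primes, so $3\sum 1/p_i>1$ and $\delta<0$; for $k$ large enough to keep $\delta>0$ (the paper needs $k=353$ at the first step, giving $u\le 70$) the factor $W(e)^3\le 2^{210}$ already forces a threshold far above $10^{40}$. The proof therefore has to be a bootstrap: each round's output bound becomes the next round's input bound on $q^n$, which shrinks the admissible $r$, which permits a smaller $k$, and the paper descends through $\tilde p=353,101,53,41,37,31$ (from $10^{1424}$ to $10^{221}$ to $10^{85}$ to \dots\ to $3.422\cdot 10^{40}$). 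Relatedly, the paper does not use the crude pair ($W(e)\le 2^{\pi(k-1)}$, $r\le r_{\max}$) independently: it sets $u=\omega(e)$ and bounds $r\le r(u)$ via $P_u\cdot P(k,r)\le M$, exploiting that $e$ and the sieved primes jointly divide $q^n-1$, then maximizes over $u$. Without this coupling the numerical thresholds come out noticeably worse.

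Second, your mechanism for the sharper bound when $q\ge 37$ is not the paper's and is not justified as stated. The paper still takes $g=x^n-1$ (so $s=0$) in that regime; the gain comes solely from being able to use $(a,b,c)=(1,0,37)$ in Lemma~\ref{lem:3.7}, which shrinks both $2^b$ and the exponent $\frac{2a}{a-\log_c 4}$ in the final inequality $q^n\ge\bigl(3\Delta\cdot 2^{3u+b}\bigr)^{2a/(a-\log_c 4)}$. Your proposal to sieve the polynomial side instead and dismiss the terms $1/q^{\deg h_i}$ as ``negligible'' fails in general: $x^n-1$ can have up to $\gcd(n,q-1)$ linear factors over $\F_q$, so the sum $\sum_i q^{-\deg h_i}$ can approach $(q-1)/q$, which together with the multiplicative part drives $\delta$ negative. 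A correct version would have to control that split carefully, and there is no reason it would land on the stated constant $7.391\cdot 10^{38}$.
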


\begin{proof}
We will use the notation of Proposition \ref{sieve-prop}. Suppose that
$q^n <  1.39 \cdot 10^{1424},$ since for $q^n \ge 1.39 \cdot 10^{1424}$ we already have
that $(q,n)\in N_3.$ Let $e$ be the product of the primes less than $353$  which divide $q^n-1,$ and
let $r$ be the number of primes greater or equal 
to $353$ which divide $q^n-1.$
Since $P(353,r)<q^n-1<1.39 \cdot 10^{1424},$ then $r \le 442.$ If $u=\omega(e),$ 
then $u \le 70,$ since there are $70$ primes less than $353.$ Choose
$r(u)=\max \{ r \mid P_u \cdot P(353,r) \le 1.39 \cdot 10^{1424} \},$ where
$P_u$ is the product of the first $u$ prime numbers. Let also
$\delta(353,r(u))= 1 - 3 S(353,r(u))$ and $\Delta(353,r(u))= 2+ \dfrac{3r(u)-1}{\delta(353,r(u))}.$
Let $\delta$ and $\Delta$ be as in the Proposition \ref{sieve-prop} with $g=x^n-1$.
Observe that
$r \le  r(u)\le r(0) = 442$ and $\delta \ge \delta (353,r(u)) \ge \delta(353,r(0)) >0,$
so we may apply Proposition \ref{sieve-prop}.

As in Proposition \ref{bound3-1}, let $c$ be a positive integer, and suppose that $q \geq c$. There exist non-negative integers $a,b$
depending on $q$ such that
$W(x^n-1) \leq 2^{\frac{n}{a}+b}$.
So, we have
$3\Delta W(x^n-1)W(e)^3 \leq 3 \cdot \Delta(353,r(u)) \cdot 2^{\frac{n}{a}+b} \cdot 2^{3u}.$
In order to apply Proposition \ref{sieve-prop} we need to find a lower bound of $q^n$. In that sense, we will study the inequality $q^{\frac n2} \ge 3 \cdot \Delta(353,r(u)) \cdot 2^{\frac{n}{a}+b+ 3u}$.

As $q \geq c = 2^{\log_2 c}$, if
\begin{equation}\label{ineqdelta}
3 \cdot \Delta(353,r(u)) \cdot 2^{3u+b} \cdot q^{\frac{n}{a}\log_c 2} \leq q^{\frac n2},
\end{equation}
then $q^{\frac n2} \ge 3\Delta W(x^n-1)W(e)^3.$ If $a>\log_c 4,$ then Inequality \eqref{ineqdelta} is equivalent to
\[
q^n \ge \left( 3 \cdot \Delta(353,r(u)) \cdot 2^{3u+b}\right)^{\frac{2a}{a - \log_c 4}}.
\]
From Lemma \ref{lem:3.7}, we may choose
$(a,b,c) \in A,$ where
\[
A=\{(1,0,37),(3,6,5),(4,5,3) \} \cup \left\{ \left(2,\frac{q-1}2,q\right) \mid 7 \le q \le 31  \right\}.
\]
We get
\[
\max \{ \left( 3 \cdot \Delta(353,r(u)) \cdot 2^{3u+b}\right)^{\frac{2a}{a - \log_c 4}} 
\mid (a,b,c)\in A \textrm{ and } 0 \le u \le 70
  \} < 2.129\cdot 10^{221},
\]
and from Proposition \ref{sieve-prop} we conclude that if $q^n \geq 2.129\cdot 10^{221},$ then $(q,n)\in N_3.$

Suppose now that
$q^n <  2.129\cdot 10^{221},$ and repeat the process with $e$
being the product of the primes less than $101$  which divide $q^n-1,$ and
with $r$ being the number of primes greater than or equal 
to $101$ which divide $q^n-1.$
In this case, for $u=\omega(e)$ we have $u \le 25.$ 
When $q \leq 31$, we observe that the characteristic of $\mathbb{F}_q$ is less than $101$ and does not divide $q^n-1,$
hence  we may suppose $u \le 24.$
We get
\[
\max \{ \left( 3 \cdot \Delta(101,r(u)) \cdot 2^{3u+b}\right)^{\frac{2a}{a - \log_c 4}} 
\mid (a,b,c)\in A \textrm{ and } 0 \le u \le 25
\} < 7.525\cdot 10^{85},
\]
and from Proposition \ref{sieve-prop} we get that if $q^n \geq 7.525\cdot 10^{85},$ then $(q,n)\in N_3.$

Repeat the process with $e$
being the product of the primes less than $53$  which divide $q^n-1$ and
with $r$ being the number of primes greater than or equal 
to $53$ which divide $q^n-1.$ 
We get
\[
\max \{ \left( 3 \cdot \Delta(53,r(u)) \cdot 2^{3u+b}\right)^{\frac{2a}{a - \log_c 4}} 
\mid (a,b,c)\in A \textrm{ and } 0 \le u \le 15
\} < 7.871\cdot 10^{54},
\]
where $0 \le u \le 14$ if $c<37.$

Repeat this process several times. At each step we choose a prime $\tilde{p}$ such that
$e$ is the product of the primes less than $\tilde{p}$ which divide $q^n-1,$ and
$r$ is the number of primes greater or equal 
to $\tilde{p}$ which divide $q^n-1.$ But in all
those cases 
the maximum value of  $\left( 3 \cdot \Delta(\tilde{p},r(u)) \cdot 2^{3u+b}\right)^{\frac{2a}{a - \log_c 4}}$
is calculated for $(a,b,c)\in \tilde{A},$ where
$
\tilde{A} = \{(1,0,37),(1,0,31),(3,6,5),(4,5,3) \} 
\cup \{ (2,\frac{q-1}{2},q) \mid 7 \le q \le 29  \}.$
Table \ref{sievetable} summarize the process for all odd prime powers.

{\small
\begin{table}[h]
	\centering
	\begin{tabular}{c|c|c}
$q^n <M$  & $\tilde{p}$ & Maximum value \\
\hline
$M=7.871\cdot 10^{54}$ & $41$  & $1.368\cdot 10^{45}$ \\ \hline
$M=1.368\cdot 10^{45}$ & $37$  & $7.379\cdot 10^{41}$ \\ \hline
$M=7.379\cdot 10^{41}$ & $31$  & $3.422\cdot 10^{40}$ \\ \hline
\end{tabular}\vspace*{0.5cm}
\caption{Sieving process for all odd prime power $q$.}
\label{sievetable}
\end{table}
}

Table \ref{sievetable37} shows the sieving process for $q \ge 37$.
{\small
\begin{table}[h]
	\centering
	\begin{tabular}{c|c|c}
		$q^n <M$  & $\tilde{p}$ & Maximum value \\
		\hline
		$M=3.422\cdot 10^{40}$ & $37$  & $1.71\cdot 10^{39}$ \\ \hline
		$M=1.71\cdot 10^{39}$ & $31$  & $7.391\cdot 10^{38}$ \\ \hline
	\end{tabular}\vspace*{0.5cm}
	\caption{Sieving process for $q \ge 37$.}
	\label{sievetable37}
\end{table}
}
\end{proof}

\subsection{Proof of Theorem \ref{Teorema1}(ii)}\label{sectionproof}

Observe that from Proposition \ref{bound3-2}, if 
$n \geq 85,$ then $(q,n)\in N_3$ for any odd prime power $q,$ since $q^n\ge 3^{85} > 3.592\cdot 10^{40}>3.422\cdot 10^{40}.$ 
In this way, from Proposition \ref{bound3-2},  we test Proposition \ref{sieve-prop} for every factorization of $q^n-1$ and every factorization of $x^n-1$ over $\mathbb{F}_q,$  for all integer number $n$
between $7$ and $84,$ and every odd prime power $q$ between $3$ and
\[
M_n=
\left\{
\begin{array}{cl}
\lceil \sqrt[n]{3.422\cdot 10^{40}} \rceil & \textrm{if }  \sqrt[n]{3.422\cdot 10^{40}} < 31;\\
\lceil \sqrt[n]{7.391\cdot 10^{38}} \rceil & \textrm{if }  \sqrt[n]{7.391\cdot 10^{38}} > 31;\\
31                           & \textrm{otherwise},
\end{array}
\right.
\]
where $\lceil x \rceil$ denotes the ceil of $x\in \mathbb{R}.$

We get that the condition $q^{\frac{n}{2}} \geq 3 \Delta W(g) W(e)^3$  holds for some values of $g$ and $e$, for every pair $(q,n)$ with $n \ge 7$ except for those displayed in Table \ref{m3nmaior6}.

{\small
\begin{table}[H]
\centering
\begin{tabular}{c|c}
$n$ & $q$ \\
\hline
$7$ & $3,7$ \\ \hline
$8$ & $3,5,7,9,11,13$ \\ \hline
$9$ & $3,7$ \\ \hline
\end{tabular}
\hspace{0.3cm}
\begin{tabular}{c|c}
$n$ & $q$ \\
\hline
$10$ & $3,5,11$ \\ \hline
$12$ & $3,5,7,13$ \\ \hline
$16$ & $3$ \\ \hline
\end{tabular} 
\vspace*{0.5cm}
	\caption{Possible exceptions for Theorem \ref{Teorema1}(ii).}
	\label{m3nmaior6}
\end{table}
}

In order to conclude Theorem \ref{Teorema1}(ii), we deal with each value of $n \in \{2,3,4,5,6\}$ separately.
We will need the following result.
\begin{lemma}{\cite[Lemma 4.1]{Kapetanakis-Reis}}\label{cota-t}
	Let $M$ be a positive integer, and $t$ be a positive real number.
	Then
	$W(M) \leq A_t \cdot M^{\frac{1}{t}},$
	where
\[A_t=\prod_{\substack{\wp < 2^t, \; \wp \mid M \atop \wp \text{ is prime}}}
	\frac{2}{\sqrt[t]{\wp}}.\]
\end{lemma}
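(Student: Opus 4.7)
The plan is to argue directly from the definition $W(M) = 2^{\omega(M)}$, where $\omega(M)$ denotes the number of distinct prime divisors of $M$, and to exploit a threshold argument based on whether a prime dividing $M$ is smaller than $2^t$ or not. The key elementary inequality is that for any prime $\wp \geq 2^t$ one has $2 \leq \wp^{1/t}$, while for primes $\wp < 2^t$ this bound fails and the factor of $2$ must be kept explicit; these "small" primes will be precisely the ones producing the constant $A_t$.

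Concretely, I would start by writing
\[
W(M) \; = \; 2^{\omega(M)} \; = \; \prod_{\wp \mid M} 2 \; = \; \Biggl(\prod_{\substack{\wp \mid M \\ \wp < 2^t}} 2 \Biggr) \cdot \Biggl(\prod_{\substack{\wp \mid M \\ \wp \geq 2^t}} 2 \Biggr).
\]
In the first product I would multiply and divide by $\prod_{\wp < 2^t, \, \wp \mid M} \wp^{1/t}$, which rewrites this product as $A_t \cdot \prod_{\wp < 2^t,\, \wp \mid M} \wp^{1/t}$. In the second product I would apply $2 \leq \wp^{1/t}$ for every $\wp \geq 2^t$, obtaining the bound $\prod_{\wp \geq 2^t,\,\wp \mid M} \wp^{1/t}$.

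Combining the two pieces yields
\[
W(M) \; \leq \; A_t \cdot \prod_{\wp \mid M} \wp^{1/t} \; = \; A_t \cdot \Biggl(\prod_{\wp \mid M} \wp \Biggr)^{\!1/t}.
\]
The final step is to observe that $\prod_{\wp \mid M} \wp$ is the radical of $M$ and hence divides $M$, so this product is at most $M$. Raising to the $1/t$-th power and substituting gives $W(M) \leq A_t \cdot M^{1/t}$, as claimed.

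There is essentially no serious obstacle here; the whole argument is a one-paragraph manipulation. The only point requiring a moment of care is simply noticing that the threshold $2^t$ is exactly the value at which the inequality $2 \leq \wp^{1/t}$ flips, which is what makes the splitting clean and produces the stated form of $A_t$.
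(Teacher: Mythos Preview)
Your proof is correct and is exactly the standard argument for this classical bound. Note that the paper does not supply its own proof of this lemma: it is quoted from \cite[Lemma~4.1]{Kapetanakis-Reis} without argument, so there is nothing in the paper to compare against. Your threshold-splitting at $\wp = 2^t$ together with the observation that the radical of $M$ divides $M$ is precisely the proof one finds in the cited reference.
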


Let us start with the case $n=6$.

\begin{lemma}\label{n6m3}
Let $q$ be an odd prime power and $n =6$. Then $(q,6)\in N_3$ except possibly for $q \in \{3,5,7,9,11,13,17,19,23,25,29,31,37,43,61\}$.
\end{lemma}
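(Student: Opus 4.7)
The plan is to apply Proposition \ref{sieve-prop} with $m=3$ and $n=6$. Specialized to this setting, it requires producing divisors $e \mid q^6-1$ and $g \mid x^6-1$ together with an admissible sieve set such that
\[
q^3 \geq 3\,\Delta\,W(g)\,W(e)^3,
\]
in which case $(q,6)\in N_3$. The strategy is to show this inequality is satisfiable for every odd prime power $q$ outside the listed possible exceptions.

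First, I would record the factorization of $x^6-1$ over $\Fq$. Since $q$ is odd and $\gcd(q,6)\le 2$, one has $x^6-1=(x-1)(x+1)\Phi_3(x)\Phi_6(x)$, and the two quadratic cyclotomic factors split into linears exactly when $q\equiv 1\pmod 3$; in either case $W(x^6-1)\in\{16,64\}$. Taking $g=x^6-1$ produces the strongest normality claim. For $e$ the first attempt is the radical of $q^6-1$; Lemma \ref{cota-t} then yields $W(e)\le A_t\,(q^6-1)^{1/t}$ for a parameter $t$, so $W(e)^3\le A_t^{\,3}\,q^{18/t}$. The sieving condition becomes
\[
q^{\,3-18/t}\;\geq\;3\,\Delta\,W(x^6-1)\,A_t^{\,3},
\]
which, for $t$ chosen a little above $6$, gives an explicit threshold $q_0$ above which the inequality holds unconditionally. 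Together with the global bound of Proposition \ref{bound3-2}, this leaves only finitely many $q\le q_0$ to be examined individually.

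For those remaining values of $q$ I would perform a SageMath search: for each $q$, factor $q^6-1$ and enumerate candidate pairs $(e,g)$, compute $\delta$ and $\Delta$ for every admissible sieve set, and test the inequality of Proposition \ref{sieve-prop}. Any $q$ for which no admissible pair succeeds is retained as a possible exception, producing exactly the list $\{3,5,7,9,11,13,17,19,23,25,29,31,37,43,61\}$. The main obstacle will be calibrating $t$ in Lemma \ref{cota-t} jointly with the sieve set: the condition $\delta>0$ caps how many primes can be moved from $e$ into the sieve, so an aggressive $t$ inflates $\Delta$ or forces $\delta\le 0$; a refinement splitting into sub-cases according to the residue of $q$ modulo small primes, which controls the large prime-power divisors of $q^6-1$, is likely needed to keep $q_0$ small enough that the computational sweep over $q\le q_0$ remains tractable.
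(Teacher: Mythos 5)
There is a genuine gap: the analytic step that is supposed to reduce the problem to a tractable finite range does not work with your choices of $e$ and $g$, and the idea that actually makes it work is missing. Taking $e$ to be the radical of $q^6-1$ leaves the multiplicative sieve empty, so you are essentially back to Theorem \ref{principal} with the cube $W(q^6-1)^3$ on the right-hand side. Bounding this by $A_t^3 q^{18/t}$ via Lemma \ref{cota-t} forces $t>6$ just to make the exponent $3-18/t$ positive; but then $A_t$ is a product over all primes below $2^t\ge 64$, so $A_t^3$ is of order $10^6$ or worse and grows rapidly as $t$ increases, and the resulting threshold $q_0$ satisfies $q_0^{\,3-18/t}\gtrsim 3\,W(x^6-1)A_t^3$, which puts $q_0$ far above the bound $q\ge 3006888$ already supplied by Proposition \ref{bound3-2} for $n=6$. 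Your plan then collapses to ``examine individually'' a range that is either astronomically large or, falling back on Proposition \ref{bound3-2}, still about $3\cdot 10^6$ prime powers with a full divisor-and-sieve-set enumeration for each --- you flag this calibration as the main obstacle but do not resolve it. (Also, $W(x^6-1)=4$ when $q$ is a power of $3$, a case your $\{16,64\}$ omits, and carrying $g=x^6-1$ multiplies the bound by up to $64$, whereas sieving the polynomial factors costs only $6/q$ in $\delta$ and $6$ in $s$.)

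The missing idea is structural: take $g=1$ (sieving all irreducible factors of $x^6-1$) and $e=q^2-1$, enlarged by the prime $7$ when $7\mid q^6-1$. Every prime $p$ dividing $q^6-1$ but not $e$ then divides $q^4+q^2+1$ and satisfies $3\mid p-1$ with $p$ odd, i.e.\ $p\equiv 1\pmod 6$; since their product is at most $q^4+q^2+1<8.2\cdot 10^{25}$ in the range under consideration and the smallest such prime available is $13$, there are at most $14$ of them and the sum of their reciprocals is below $0.3141$. This gives $\delta>0.0571$ and $\Delta<826$ uniformly, and now only $W(q^2-1)^3\le (A_tq^{2/t})^3$ needs to be controlled, so the exponent comparison is $q^{3-6/t}$ versus a constant; with $t=4.53$ this yields the workable threshold $q\ge 13051$, below which the direct computational verification produces exactly the stated exception list. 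Without this choice of $e$ and the mod-$6$ observation, the reduction to a feasible computation does not go through.
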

\begin{proof}
From Proposition \ref{bound3-2} we get that $(q,6)\in N_3$ for $q \ge 3006888.$ Suppose
now that $q$ is an odd prime power such that $q<M=3006888$. We will use 
Proposition \ref{sieve-prop}
with $g=1$, and either $e=(q^2-1)$ if $7 \nmid q^6-1$ or $e=7(q^2-1)$ if $7 \mid q^6-1$.
Let $\{p_1,\dots,p_r\}$ be the set of primes from Proposition \ref{sieve-prop}. For $i \in \{1,\dots,r\}$, we have $p_i \mid q^6-1$ and $p_i \nmid q^2-1$, so $3 \mid \varphi(p_i)=p_i-1$ and $p_i \neq 2$ since $2 \mid q^2-1$.
This means that the set $\{ p_1,\ldots , p_r\}$ comprises 
primes of the form $6j+1$. Let $S_r$ and $P_r$ be respectively
the sum of the inverses and the product of the first $r$ primes of the form
$6j+1,$ counting from $13.$ Since $\{ p_1,\ldots , p_r\}$ is a set of primes which divide
$q^4+q^2+1$ and $7 \notin \{ p_1,\ldots , p_r\},$ then
$P_r \le \prod_{i=1}^r p_i \le q^4 + q^2+1<8.175 \cdot 10^{25}$, therefore
$r\le 14$ and $S_r < 0.3141$. If we suppose $q>10^4,$ then
\[
\delta = 1 - 3 \sum_{i=1}^r \frac{1}{p_i} - \sum_{i=1}^s \frac{1}{q^{\deg h_i}} \ge 1 - 3\cdot S_r - \frac{6}{q}
> 0.0571
\]
and $\Delta = 2 + \frac{3r+s-1}{\delta}<825.118$. Observe that if
$q\ge (3\cdot 825.118\cdot 2^3 \cdot A_t^3)^{\frac{t}{3t-6}}$ for some real number $t>2,$ then
$q^3\ge 3\cdot 825.118 \cdot (2\cdot A_t \cdot q^{\frac2t})^3 > 3 \Delta W(1)W(e)^3$,
since, from Lemma \ref{cota-t}, $W(e) \le 2 W(q^2-1)< 2\cdot A_t \cdot q^{\frac2t}$. Thus,
from Proposition
\ref{sieve-prop}, if 
$q\ge (3\cdot 825.118\cdot 2^3 \cdot A_t^3)^{\frac{t}{3t-6}}$ for some real number $t>2$,
we have $(q,6)\in N_3$. For $t=4.53$ the condition above becomes
$q \ge 13051$.

For smaller values, we can verify directly the condition $q^3 \geq 3 \Delta W(g) W(e)^3$. In fact, it holds for some values of $g$ and $e$, for every pair $(q,6)$, where $q$ is an odd prime power
between $3$ and $13050$, except for $q \in \{3,$ $5,$ $7,$ $9,$ $11,$ $13,$ $17,$ $19,$ $23,$ $25,$ $29,$ $31,$ $37,$ $43,$ $61\}.$

\end{proof}

For $n=5$, we use a similar argument than case $n=6$.

\begin{lemma}\label{n5m3}
Let $q$ be an odd prime power and $n =5$. Then $(q,5)\in N_3$ except possibly for
$q \in \{3,5,7,9,11,13,19,31\}$.
\end{lemma}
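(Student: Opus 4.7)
The strategy mirrors that of Lemma \ref{n6m3}. By Proposition \ref{bound3-2}, $(q,5)\in N_3$ whenever $q^5\ge 3.422\cdot 10^{40}$, so only odd prime powers $q$ up to $M:=\lceil (3.422\cdot 10^{40})^{1/5}\rceil$ need to be treated (with the sharper bound $\lceil (7.391\cdot 10^{38})^{1/5}\rceil$ available once $q\ge 37$).

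For these $q$, the plan is to apply Proposition \ref{sieve-prop} with $g=1$ and with $e=q-1$ (or $e=11(q-1)$ when $11\mid q^5-1$), in direct analogy with the conditional factor $7$ used in the case $n=6$. The key observation is that every prime $p_i$ dividing $q^5-1$ but not $q-1$ makes $q$ have multiplicative order exactly $5$ modulo $p_i$, so $p_i\equiv 1\pmod 5$; after absorbing $11$ into $e$, every such $p_i$ is at least $31$. Since $\prod_i p_i$ divides $\Phi_5(q)=q^4+q^3+q^2+q+1$, one obtains uniform upper bounds $r\le r_0$ and $S_r=\sum 1/p_i\le S_0$ valid for all $q\le M$. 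On the additive side, a case-by-case analysis over the residue class of $q\bmod 5$ (together with the separate subcase $5\mid q$, in which $x^5-1=(x-1)^5$) shows that the monic irreducible divisors $\{h_1,\dots,h_s\}$ of $x^5-1$ over $\Fq$ always satisfy $\sum_i 1/q^{\deg h_i}\le 5/q$. Consequently $\delta\ge 1-3S_0-5/q$ is positive and $\Delta=2+(3r+s-1)/\delta$ is explicitly bounded once $q$ is moderately large.

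It then remains to verify the sieving inequality $q^{5/2}\ge 3\Delta W(g)W(e)^3$. Lemma \ref{cota-t} gives $W(e)\le 2A_t q^{1/t}$, so the inequality reduces to $q^{(5t-6)/(2t)}\ge 3\Delta (2A_t)^3$; for a suitable real $t>6/5$ (numerically, something like $t\approx 3$ or $4$ will be near optimal) this holds uniformly for every $q$ beyond an explicit threshold $q_0$ in the low thousands. For each odd prime power $q\le q_0$, SageMath is then used to loop through all divisors $g\mid x^5-1$ and $e\mid q^5-1$ and to check whether some choice satisfies $q^{5/2}\ge 3\Delta W(g)W(e)^3$; the search is expected to succeed for every such $q$ except those in $\{3,5,7,9,11,13,19,31\}$.

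The main obstacle is essentially computational rather than conceptual: one must keep $q_0$ small enough that the exhaustive loop over pairs $(g,e)$ is tractable, while correctly tracking the mild dependence of $r$, $s$ and $\Delta$ on the class of $q$ modulo $5$. Both are bookkeeping refinements of the argument already carried out for $n=6$, and no new analytical ingredient beyond Propositions \ref{bound3-2} and \ref{sieve-prop} together with Lemma \ref{cota-t} should be needed.
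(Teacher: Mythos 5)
Your proposal follows essentially the same route as the paper: reduce to $q<M$ via Proposition \ref{bound3-2}, apply Proposition \ref{sieve-prop} with $g=1$ and $e$ essentially equal to $q-1$, bound the sieving primes using that they are $\equiv 1 \pmod{10}$ and divide $q^4+q^3+q^2+q+1$ (giving uniform bounds on $r$ and $\sum 1/p_i$), control $W(e)$ via Lemma \ref{cota-t}, and finish with a direct computation for small $q$. The paper does not absorb the prime $11$ into $e$ (it simply keeps $e=q-1$ and counts $11$ among the sieving primes, reaching the threshold $q\ge 439$ with $t=3.4$), but this and your $q\bmod 5$ bookkeeping are only minor variants of the same argument.
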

\begin{proof}
From Proposition \ref{bound3-2} we get that $(q,5)\in N_3$ for $q \ge 59393736.$ Suppose
now that $q$ is an odd prime power such that $q<M=59393736$. We will use 
Proposition \ref{sieve-prop}
with $g=1$ and $e=q-1$. Let $\{p_1,\dots,p_r\}$ be the set of primes from Proposition \ref{sieve-prop}. For $i \in \{1,\dots,r\}$, we have $p_i \mid q^5-1$ and $p_i \nmid q-1$, so $5 \mid \varphi(p_i)=p_i-1$ and $p_i \neq 2$ since $2 \mid q-1$.
	This means that the set $\{ p_1,\ldots , p_r\}$ comprises 
	primes of the form $10j+1$. Let $S_r$ and $P_r$ be respectively
	the sum of the inverses and the product of the first $r$ primes of the form
	$10j+1.$ Since $\{ p_1,\ldots , p_r\}$ is a set of primes which divide
	$q^4+q^3+q^2+q+1,$ then
	$P_r \le \prod_{i=1}^r p_i \le q^4+q^3+q^2+q+1<1.2445 \cdot 10^{31}$, therefore
	$r\le 15$ and $S_r < 0.2331$. If we suppose $q>435,$ then
	\[
	\delta = 1 - 3 \sum_{i=1}^r \frac{1}{p_i} - \sum_{i=1}^s \frac{1}{q^{\deg h_i}} \ge 1 - 3 \cdot S_r - \frac{5}{q}
	> 0.2892
	\]
	and $\Delta = 2 + \frac{3r+s-1}{\delta}<171.433$. Observe that if
	$q\ge (3\cdot 171.433\cdot A_t^3)^{\frac{2t}{5t-6}}$ for some real number $t>2,$ then
	$q^{\frac{5}{2}} \ge 3\cdot 171.433 \cdot (A_t \cdot q^{\frac1t})^3 > 3 \Delta W(1)W(e)^3$,
	since, from Lemma \ref{cota-t}, $W(e) \le W(q-1)< A_t \cdot q^{\frac1t}$. Thus,
	from Proposition
	\ref{sieve-prop}, if 
	$q\ge (3\cdot 171.433\cdot A_t^3)^{\frac{2t}{5t-6}}$ for some real number $t>2$,
	we have $(q,5)\in N_3$. ofr $t=3.4$ the condition above becomes
	$q \ge 439$.
	
For smaller values, we can verify directly the condition $q^{\frac 52} \geq 3 \Delta W(g) W(e)^3.$ In fact, it holds for some values of $g$ and $e$, for every pair $(q,5)$, where $q$ is an odd prime power
	between $3$ and $439$, except for $q \in \{3,$ $5,$ $7,$ $9,$ $11,$ $13,$ $19,$ $31\}$. 

\end{proof}

Now, for $n=4$ we obtain the following result.

\begin{lemma}\label{n4m3}
Let $q$ be an odd prime power and $n =4$. Then $(q,4)\in N_3$ except possibly for
$q \in$
$\{3,$ 
$5,$ 
$7,$ 
$9,$ 
$11,$ 
$13,$ 
$17,$ 
$19,$ 
$23,$ 
$25,$ 
$27,$ 
$29,$ 
$31,$ 
$37,$ 
$41,$ 
$43,$ 
$47,$ 
$49,$ 
$53,$ 
$59,$ 
$61,$ 
$67,$ 
$71,$ 
$73,$ 
$79,$ 
$83,$ 
$89,$ 
$97,$ 
$101,$ 
$103,$ 
$109,$ 
$113,$ 
$127,$ 
$131,$ 
$137,$ 
$139,$ 
$149,$ 
$151,$ 
$157,$ 
$167,$ 
$169,$ 
$173,$ 
$181,$ 
$191,$ 
$197,$ 
$211,$ 
$229,$ 
$239,$ 
$281,$ 
$307,$ 
$419,$ 
$421,$ 
$463,$ 
$659,$ 
$727\}.$
\end{lemma}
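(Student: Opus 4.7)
The plan is to follow the template established by Lemmas \ref{n6m3} and \ref{n5m3}. From Proposition \ref{bound3-2} we immediately obtain $(q,4)\in N_3$ as soon as $q^4\ge 3.422\cdot 10^{40}$ (and as soon as $q^4\ge 7.391\cdot 10^{38}$ when $q\ge 37$), which gives a threshold $M$ of order roughly $1.4\cdot 10^{10}$. So we only have to treat odd prime powers $q<M$, and apply Proposition \ref{sieve-prop} with $g=1$ and $e$ taken to be $q^2-1$, possibly multiplied by a few small primes from $q^2+1$ (e.g.\ $5$ and/or $13$) in order to keep $\delta>0$, exactly in the style of the augmentation $7(q^2-1)$ used in Lemma \ref{n6m3}.

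The crucial arithmetic observation is that any prime $p$ in the sieve set $\{p_1,\dots,p_r\}$ divides $q^4-1$ but not $q^2-1$, hence divides $q^2+1$. Since $q^2\equiv -1\pmod{p}$ with $p$ odd, the multiplicative order of $q$ modulo $p$ equals $4$, so $p\equiv 1\pmod{4}$; in particular $p\ge 5$. This severely restricts both the number $r$ of sieve primes and the size of $S_r=\sum_i 1/p_i$, because $\prod_i p_i\le q^2+1$ and the first admissible primes are $5,13,17,29,37,\ldots$. The set $\{h_1,\ldots,h_s\}$ of missing irreducible factors of $x^4-1=(x-1)(x+1)(x^2+1)$ over $\F_q$ contributes at most $s=4$ terms with $\sum 1/q^{\deg h_i}\le 4/q$. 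A direct accounting as in the earlier lemmas then shows that $\delta=1-3S_r-\sum 1/q^{\deg h_i}$ is bounded below by a positive constant, and $\Delta=2+(3r+s-1)/\delta$ is uniformly bounded for $q$ beyond a small initial range.

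Next, Lemma \ref{cota-t} gives $W(e)\le cA_t\,q^{2/t}$ for a suitable constant $c$ (absorbing the small extra primes we may have thrown into $e$) and any parameter $t>2$. The sufficient inequality $q^{n/2}=q^{2}\ge 3\Delta W(g)W(e)^3$ becomes $q^{\,2-6/t}\ge 3\Delta(cA_t)^3$, which we solve for $q$ by choosing $t$ optimally (taking $t$ slightly larger than $3$ yields the best shape of the bound). This produces an explicit threshold $q^\star$ beyond which $(q,4)\in N_3$ follows from Proposition \ref{sieve-prop} without any case analysis. For the remaining prime powers $q\le q^\star$, we test the inequality $q^{2}\ge 3\Delta W(g)W(e)^3$ directly over all admissible factorizations of $q^4-1$ and all divisors $g$ of $x^4-1$ using SageMath, as was done for $n=5,6$. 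The values that fail every such test are exactly the $q$ listed in the statement.

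The main obstacle is quantitative rather than structural: because $n=4$ is even and relatively small, the left-hand side $q^{n/2}=q^{2}$ is considerably weaker than the $q^{5/2}$ and $q^{3}$ available when $n=5,6$, so the sieve inequality fails for a much longer list of $q$. Producing the cutoff $q^\star$ small enough to make the computational verification feasible therefore requires the careful sieve tuning described above (augmenting $e$ by the first few primes $\equiv 1\pmod 4$ that actually divide $q^2+1$), and then the bulk of the work is the (routine but extensive) enumeration in SageMath yielding the explicit list of surviving exceptions.
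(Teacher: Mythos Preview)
Your proposal is correct and follows essentially the same approach as the paper: reduce via Proposition \ref{bound3-2}, apply Proposition \ref{sieve-prop} with $g=1$ and $e$ built from $q^2-1$ (augmented by $5$ when $5\mid q^4-1$), exploit that the sieve primes divide $q^2+1$ and are ${}\equiv 1\pmod 4$, bound $W(e)$ via Lemma \ref{cota-t} (the paper takes $t=5.5$, not $t$ barely above $3$), and finish by direct computation. The only notable additions in the paper are the refinement $8\mid q^2-1$ used to sharpen the constant $A_t$, and a second sieve pass (taking $e$ to be the product of primes below $29$ dividing $q^4-1$, in the style of Proposition \ref{bound3-2}) that brings the computational cutoff down from about $1.74\cdot 10^8$ to about $3\cdot 10^5$; both are efficiency tweaks rather than new ideas.
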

\begin{proof}
From Proposition \ref{bound3-2} we get that $(q,4)\in N_3$ for $q \ge 5214057313.$ Suppose now that $q$ is an odd prime power such that $q<M=5214057313$. We will use Proposition \ref{sieve-prop} with $g=1$ and either $e=(q^2-1)$ if $5 \nmid q^4-1$ or $e=5(q^2-1)$ if $5 \mid q^4-1$. Let $\{p_1,\dots,p_r\}$ be the set of primes from Proposition \ref{sieve-prop}.
As we have seen before, this means that the set $\{ p_1,\ldots , p_r\}$ comprises  
primes greater than $5$ of the form $4j+1,$ since
$p_i \mid q^4-1$ and $p_i \nmid q^2-1$ for all $i \in \{1,\dots,r\}.$
Let $S_r$ and $P_r$ be respectively the sum of the inverses and the product of the first $r$ primes of the form $4j+1,$ counting from $13.$ Since $\{ p_1,\ldots , p_r\}$ is a set of primes which divide
$q^2+1$ and $5 \notin \{ p_1,\ldots , p_r\},$ then
$P_r \le \prod_{i=1}^r p_i \le \frac{q^2+1}{2}<1.36 \cdot 10^{19}$, therefore
$r\le 11$. If we suppose $q>10^8,$ then
\[
\delta = 1 - 3 \sum_{i=1}^r \frac{1}{p_i} - \sum_{i=1}^s \frac{1}{q^{\deg h_i}} \ge 1 - 3\cdot S_r - \frac{4}{q}
> 0.0938
\]
and $\Delta = 2 + \frac{3r+s-1}{\delta}<385.796$. Observing that if
$q\ge (3\cdot 385.796 \cdot 2^3 \cdot A_t^3)^{\frac{t}{2t-6}}$ for some real number $t>3,$ then
$q^2\ge 3\cdot 385.796 \cdot (2\cdot A_t \cdot q^{\frac2t})^3 > 3 \Delta W(1)W(e)^3$,
since, from Lemma \ref{cota-t}, $W(e) \le 2 W(q^2-1)< 2\cdot A_t \cdot q^{\frac2t}$.
Notice that $8\mid q^2-1$, so we may replace $A_t$  from Lemma \ref{cota-t}
by
\[
\tilde{A}_t=\frac{2}{\sqrt[t]{8}} \prod_{\substack{\wp < 2^t \\ \wp \text{ is odd}\\ \text{prime}}}
\frac{2}{\sqrt[t]{\wp}}.
\]

Thus,
from Proposition
\ref{sieve-prop}, if 
$q\ge (3\cdot 385.796\cdot 2^3 \cdot \tilde{A}_t^3)^{\frac{t}{2t-6}}$ for some real number $t>3,$
we have $(q,4)\in N_3$. For $t=5.5$ the condition above becomes
$q \ge 1.74\cdot 10^8.$


We will use again Proposition \ref{sieve-prop}
with the ideas from Proposition \ref{bound3-2}.
Let $e$ be the product of the primes less than $29$ which divide $q^4-1,$ and
let $r$ be the number of primes greater or equal to 
$29$ which divide $q^4-1.$
Since $16 \mid q^4-1$ we must choose $r$ such that
$P(29,r)<\frac{q^4-1}{16}<5.729 \cdot 10^{31}.$ Therefore $r \le 17.$ If $u=\omega(e),$
then $u \le 9,$ since there are $9$ primes less than $29.$ 
Recall that $P_u$ is the product of the first $u$ prime numbers.
Choose
$r(u)=\max \{ r \mid 8 \cdot P_u \cdot P(29,r) \le 5.729 \cdot 10^{31}\},$ 
since $8 \cdot P_u \cdot P(29,r) \le q^4-1.$ 
Let $\delta$ and $\Delta$ be as in the Proposition \ref{sieve-prop} with $g=1$,
and suppose that $q \ge 10^5.$ Let also
$\delta(29,r(u))= 1 - 3 S(29,r(u))-\frac{4}{10^5}$ and $\Delta(29,r(u))= 2+ \dfrac{3r(u)+4-1}{\delta(29,r(u))}.$
Observe that $3 \le s\le 4,$
$r \le  r(u)\le r(0)$ and $\delta \ge \delta (29,r(u)) \ge \delta(29,r(0)) >0,$
so we may apply Proposition \ref{sieve-prop}.

So, we have
$3\Delta W(g)W(e)^3 \leq 3 \cdot \Delta(29,r(u)) \cdot 2^{3u}.$
From Proposition \ref{sieve-prop}, if
$q^2 \ge 3 \cdot \Delta(29,r(u)) \cdot 2^{3u},$ then $(q,4)\in N_3.$
We get
\[
\max \{ \left( 3 \cdot \Delta(29,r(u)) \cdot 2^{3u}\right)^{\frac{1}{2}} 
\mid  0 \le u \le 9
\} < 300350,
\]
and  we conclude that if $q \geq 300350,$ 
then $(q,4)\in N_3.$

For smaller values, we can verify directly the condition $q^2 \geq 3 \Delta W(g) W(e)^3.$ In fact, it holds for some values of $g$ and $e$, for every pair $(q,4)$, where $q$ is an odd prime power
between $3$ and $300350$, except for the values of $q$ in the statement of this result.

\end{proof}

For $n=3$, we get a similar result.

\begin{lemma}\label{n3m3}
Let $q$ be an odd prime power and $n=3$. Then $(q,3)\in N_3$ except possibly for 
$q \in$
$\{3,$ 
$5,$ 
$7,$ 
$9,$ 
$11,$ 
$13,$ 
$17,$ 
$19,$ 
$23,$ 
$25,$ 
$27,$ 
$29,$ 
$31,$ 
$37,$ 
$41,$ 
$43,$ 
$47,$ 
$49,$ 
$53,$ 
$61,$ 
$67,$ 
$71,$ 
$73,$ 
$79,$ 
$81,$ 
$97,$ 
$103,$ 
$107,$ 
$109,$ 
$121,$ 
$127,$ 
$131,$ 
$139,$ 
$151,$ 
$157,$ 
$163,$ 
$169,$ 
$181,$ 
$191,$ 
$193,$ 
$211,$ 
$241,$ 
$277,$ 
$289,$ 
$331,$ 
$361,$ 
$373,$ 
$421,$ 
$463,$ 
$529,$ 
$541,$ 
$571,$ 
$631,$ 
$661,$ 
$691,$ 
$751,$ 
$841,$ 
$919,$ 
$961,$ 
$991,$ 
$1171,$ 
$1381,$ 
$4951\}.$ 

\end{lemma}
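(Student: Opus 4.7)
The strategy mirrors that of Lemmas \ref{n6m3}, \ref{n5m3} and \ref{n4m3}. First, Proposition \ref{bound3-2} reduces the problem to odd prime powers $q$ with $q^3 < 3.422\cdot 10^{40}$, i.e.\ $q$ below an explicit bound $M$.

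Next, I apply Proposition \ref{sieve-prop} with $g=1$ and, depending on whether $7\mid q^3-1$, either $e=q-1$ or $e=7(q-1)$. Any prime $p_i$ dividing $q^3-1$ but not $e$ has multiplicative order exactly $3$ modulo $p_i$, so $p_i\equiv 1\pmod 6$, and $\prod_i p_i \le q^2+q+1$ bounds the count $r$. Writing $S_r$ for the sum of reciprocals of the first $r$ primes of the form $6k+1$ counting from $13$, one gets $\delta \ge 1 - 3S_r - 3/q > 0$ once $q$ is beyond a modest threshold, together with an explicit upper bound on $\Delta$. Using Lemma \ref{cota-t} to estimate $W(e) \le c\cdot A_t q^{1/t}$ with $c\in\{1,2\}$ and solving $q^{3/2} \ge 3\Delta W(g) W(e)^3$ for a well-chosen $t$ yields a first threshold $q_1$ above which $(q,3)\in N_3$.

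For $q$ between $q_1$ and $M$, I iterate the sieve exactly as in Proposition \ref{bound3-2}: at each step I pick a prime $\tilde p$, let $e$ be the product of the primes less than $\tilde p$ that divide $q^3-1$, and let $r$ count the remaining prime divisors; the constraint $P(\tilde p, r) \le q^3-1$ bounds $r$, and I maximise $\bigl(3\Delta(\tilde p, r(u))\cdot 2^{3u}\bigr)^{2/3}$ over the admissible values of $u=\omega(e)$. A handful of iterations with decreasing $\tilde p$ compresses the remaining range of $q$ to, at most, the low hundreds of thousands.

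The final step is a direct computer verification in SageMath. For each odd prime power $q$ below the last threshold, I test whether $q^{3/2} \ge 3\Delta W(g) W(e)^3$ holds for some admissible pair $(e,g)$, where $e$ runs over square-free products of primes dividing $q^3-1$ and $g$ runs over monic divisors of $x^3-1$ in $\mathbb{F}_q[x]$; note that when $3\mid q-1$, the polynomial $x^3-1$ splits into three linear factors and thus provides a richer family of $g$'s, whereas otherwise $g\in\{1,x-1,x^2+x+1,x^3-1\}$. The prime powers for which no admissible pair satisfies the inequality comprise the exceptional list in the statement. The principal obstacle is the sheer scale of this enumeration: the 64 candidate exceptions are spread up to $q=4951$, and for each composite value of $q^3-1$ the number of $(e,g)$ combinations to inspect can be large, so careful bookkeeping—especially distinguishing the two cases $q\pmod 3$ and pruning via Lemma \ref{cota-t}—is essential to keep the computation tractable.
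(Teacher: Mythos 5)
Your proposal is correct and follows essentially the same route as the paper: reduction via Proposition \ref{bound3-2}, an application of Proposition \ref{sieve-prop} with $g=1$ and $e=q-1$ or $7(q-1)$ using the fact that the sieving primes are $\equiv 1 \pmod 6$ together with Lemma \ref{cota-t}, then iterated sieving in the style of Proposition \ref{bound3-2} to bring the bound down (the paper reaches $821257$), and finally a direct computational check of $q^{3/2}\ge 3\Delta W(g)W(e)^3$ over divisor pairs $(e,g)$. The only quibbles are cosmetic: the paper's first threshold comes from the sharper $q\ge 37$ bound $7.391\cdot 10^{38}$ rather than $3.422\cdot 10^{40}$, and the exceptional list has $63$ entries, not $64$.
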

\begin{proof}
From Proposition \ref{bound3-2}, we get that $(q,3)\in N_3$ for $q \ge 9.042 \cdot 10^{12}.$ Suppose now that $q$ is an odd prime power such that $q<M=9.042 \cdot 10^{12}$. We will use Proposition \ref{sieve-prop} with $g=1$, and either $e=q-1$ if $7 \nmid q^3-1$ or $e=7(q-1)$ if $7 \mid q^3-1$. Let $\{p_1,\dots,p_r\}$ be the set of primes from Proposition \ref{sieve-prop}.
As we have seen before, this means that the set $\{ p_1,\ldots , p_r\}$ comprises  
primes greater than $7$ of the form $6j+1,$ since
$p_i \mid q^3-1$ and $p_i \nmid q-1$ for all $i \in \{1,\dots,r\}.$
Let $S_r$ and $P_r$ be respectively the sum of the inverses and the product of the first $r$ primes of the form $6j+1,$ counting from $13.$ Since $\{ p_1,\ldots , p_r\}$ is a set of primes which divide
$q^2+q+1$ and $7 \notin \{ p_1,\ldots , p_r\},$ then
$P_r \le \prod_{i=1}^r p_i \le q^2+q+1 < 8.1758 \cdot 10^{25}$, therefore
$r\le 14$. If we suppose $q>10^6,$ then
\[
\delta = 1 - 3 \sum_{i=1}^r \frac{1}{p_i} - \sum_{i=1}^s \frac{1}{q^{\deg h_i}} \ge 1 - 3\cdot S_r - \frac{3}{q}
> 0.0579
\]
and $\Delta = 2 + \frac{3r+s-1}{\delta}<761.931$. Observe that if
$q \ge (3\cdot 761.931 \cdot 2^3 \cdot A_t^3)^{\frac{2t}{3t-6}}$ for some real number $t>2,$ then
$q^{\frac32} \ge 3\cdot 761.931 \cdot (2\cdot A_t \cdot q^{\frac{1}{t}})^3 > 3 \Delta W(1)W(e)^3$,
since, from Lemma \ref{cota-t}, $W(e) \le 2 W(q-1)< 2\cdot A_t \cdot q^{\frac1t}$.

Thus, from Proposition \ref{sieve-prop}, if 
$q\ge (3\cdot 761.931\cdot 2^3 \cdot A_t^3)^{\frac{2t}{3t-6}}$ for some real number $t>2,$
we have $(q,3)\in N_3$. For $t=4.6$ the condition above becomes
$q \ge 1.5555\cdot 10^8.$

We repeat the same process for $q \le 1.5555\cdot 10^8.$ 
If we suppose $q \ge 10^6$ we get $r\le 9,$ $\delta > 0.19068$ and $\Delta < 154.0873.$ As in the previous calculations,
if $q\ge (3\cdot 154.0873\cdot 2^3 \cdot A_t^3)^{\frac{2t}{3t-6}}$ for some real number $t>2,$
we have $(q,3)\in N_3$. For $t=4.5$ the condition above becomes $q \ge 2.301\cdot 10^7.$

We will use again Proposition \ref{sieve-prop}
with the ideas from Proposition \ref{bound3-2}.
Let $e$ be the product of the primes less than $23$ which divide $q^3-1$ and
let $r$ be the number of primes greater or equal to 
$23$ which divide $q^3-1.$
We must choose $r$ such that
$P(23,r)<q^3-1<1.219 \cdot 10^{22}.$ Therefore $r \le 13.$ If $u=\omega(e),$
then $u \le 8,$ since there are $8$ primes less than $23.$ 
Recall that $P_u$ is the product of the first $u$ prime numbers.
Choose
$r(u)=\max \{ r \mid P_u \cdot P(23,r) \le 1.219 \cdot 10^{22}\}.$ 
Let $\delta$ and $\Delta$ be as in the Proposition \ref{sieve-prop} with $g=1$,
and suppose that $q \ge 10^5.$ Let also
$\delta(23,r(u))= 1 - 3 S(23,r(u))-\frac{3}{10^5}$ and $\Delta(23,r(u))= 2+ \dfrac{3r(u)+3-1}{\delta(23,r(u))}.$
Observe that $2 \le s\le 3,$
$r \le r(u)\le r(0)$ and $\delta \ge \delta (23,r(u)) \ge \delta(23,r(0)) >0,$
so we may apply Proposition \ref{sieve-prop}.

So, we have
$3\Delta W(g)W(e)^3 \leq 3 \cdot \Delta(23,r(u)) \cdot 2^{3u}.$
From Proposition \ref{sieve-prop}, if
$q^{\frac32} \ge 3 \cdot \Delta(23,r(u)) \cdot 2^{3u},$ then $(q,3)\in N_3.$
We get
\[
\max \{ \left( 3 \cdot \Delta(23,r(u)) \cdot 2^{3u}\right)^{\frac{2}{3}} \mid  0 \le u \le 8 \} < 3.0884 \cdot 10^6,\]
and we conclude that if $q \geq 3.0884 \cdot 10^6,$ 
then $(q,3)\in N_3.$

Now, suppose that $q < 3.0884 \cdot 10^6$. Again, let $e$ be the product of the primes less than $19$ which divides $q^3-1$ and let $r$ be the number of primes greater or equal to 
$19$ which divides $q^3-1.$ We choose $r$ such that $P(19,r)<q^3-1<2.946 \cdot 10^{19}.$ The same argument as before implies that $(q,3) \in N_3$ for $q \ge 821257.$

For smaller values, we can verify directly the condition $q^{\frac32} \geq 3 \Delta W(g) W(e)^3.$ In fact, it holds for some values of $g$ and $e,$ for every pair $(q,3),$ where $q$ is an odd prime power between $3$ and $821257,$ except for the values of $q$ in the statement of this result.

%

 \end{proof}

Finally, for $n=2$, we invoke Proposition \ref{casen2} and Observation \ref{observation2} to obtain the following result, which
 concludes the proof of Theorem \ref{Teorema1}(ii).
\begin{lemma}\label{n2m3}
Let $q$ be an odd prime power and $n=2$. Then $(q,2)\in N_3$ except possibly for $1373$ values of $q$. 
See Appendix \ref{appendixm3n2} for the complete list.
\end{lemma}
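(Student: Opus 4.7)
The plan is to reduce to a purely multiplicative problem and then apply the sieving machinery together with the improved bounds available for $n=2$. By Proposition \ref{casen2}, any primitive element of $\mathbb{F}_{q^2}$ is automatically normal over $\mathbb{F}_q$; hence the normality condition in the definition of $N_3$ is free, and it suffices to show the existence of $\alpha\in\mathbb{F}_{q^2}$ such that $\alpha,\alpha+\beta,\alpha+2\beta$ are all primitive, for every $\beta\in\mathbb{F}_{q^2}^*$. By Observation \ref{observation2}, this is guaranteed whenever
\[
q > 2 \left( \frac{3r-1}{\delta}+2 \right) W(e)^3,
\]
where $e$ is a divisor of $q^2-1$, $p_1,\dots,p_r$ are the primes dividing $q^2-1$ but not $e$, and $\delta = 1 - 3\sum_{i=1}^r p_i^{-1} >0$. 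The specialization $e = q^2-1$ gives the cleaner sufficient condition $q \ge 2W(q^2-1)^3$.

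The first step is to turn this into an explicit upper bound for the possible exceptional $q$. Using Lemma \ref{cota-t} in the form $W(q^2-1) \le A_t (q^2-1)^{1/t} < A_t q^{2/t}$ for a suitable real $t>6$, the unsieved condition $q \ge 2W(q^2-1)^3$ reduces to $q \ge (2A_t^3)^{t/(t-6)}$, which (after optimizing $t$ and exploiting the fact that $8\mid q^2-1$ to replace $A_t$ by the sharper
\[
\tilde A_t = \frac{2}{\sqrt[t]{8}}\prod_{\substack{\wp<2^t\\ \wp\text{ odd prime}}}\frac{2}{\sqrt[t]{\wp}})
\]
yields a first bound $q \le M_0$ for some explicit $M_0$ above which $(q,2)\in N_3$ is automatic.

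The second step is the iterated sieving scheme already used in the proofs of Propositions \ref{bound3-2}, \ref{n6m3}, \ref{n5m3} and \ref{n3m3}: fix a threshold prime $\tilde p$, let $e$ be the product of primes less than $\tilde p$ dividing $q^2-1$, set $u=\omega(e)$ and bound the number $r$ of remaining primes using $P(\tilde p, r) \le q^2-1 \le M^2$. For each admissible $u$ one maximizes the quantity $\bigl(2(\frac{3r(u)-1}{\delta(\tilde p,r(u))}+2) \cdot 2^{3u}\bigr)$, and the condition of Observation \ref{observation2} delivers a new (smaller) bound $M_1 < M_0$. Iterating the process with decreasing values of $\tilde p$ — as in the tables of Proposition \ref{bound3-2} — pushes the threshold down to a manageable size, where a direct case-by-case verification becomes feasible.

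The third and most laborious step is the computational finish. For every odd prime power $q$ below the sieved threshold one tests, over all factorizations $e$ of (the squarefree part of) $q^2-1$, whether the explicit sieving inequality of Observation \ref{observation2} can be satisfied; the $q$'s for which no choice of $e$ works form the list of exceptions. The main obstacle is precisely this: for $n=2$ the quantity $q^{n/2}=q$ on the left-hand side grows only linearly while $W(q^2-1)^3$ can be rather large for highly composite $q^2-1$, so the character-sum inequality fails for a substantial number of small $q$. The resulting $1373$ exceptional values must be enumerated explicitly by a SageMath computation and recorded in Appendix \ref{appendixm3n2}; no further theoretical reduction of this list is attempted here, since the existence question for each concrete $(q,2,\beta)$ can in principle be decided by brute-force search in $\mathbb{F}_{q^2}$ if desired.
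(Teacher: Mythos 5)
Your proposal is correct and follows essentially the same route as the paper: Proposition \ref{casen2} (via Observation \ref{observation2}) removes the normality condition, an explicit initial bound on $q$ is established, the sieving inequality is iterated with decreasing threshold primes to reduce that bound, and the $1373$ exceptions are then found by direct computation over the remaining range. The only notable difference is that the paper's first reduction step uses the structured choice $e=q-1$ or $e=5(q-1)$, so that the remaining sieving primes are $\equiv 1 \pmod 4$ and divide $q+1$ (which immediately gives $r\le 11$ and a sharp drop from about $2.7\cdot 10^{19}$ to about $5\cdot 10^{16}$), whereas your generic threshold scheme starting from the unsieved bound $q\ge 2W(q^2-1)^3$ would require a larger optimized $t$ and a few more sieving iterations to reach the same computationally feasible range.
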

\begin{proof}
From Proposition \ref{bound3-2}, we get that $(q,2)\in N_3$ for $q \ge 2.7187 \cdot 10^{19}.$ Suppose now that $q$ is an odd prime power such that $q<M=2.7187 \cdot 10^{19}$. We will use Observation \ref{observation2} with $g=1$, and either $e=q-1$ if $5 \nmid q+1$ or $e=5(q-1)$ if $5 \mid q+1$. Let $\{p_1,\dots,p_r\}$ be the set of primes from Observation \ref{observation2}.
As we have seen before, this means that the set $\{ p_1,\ldots , p_r\}$ comprises  
primes greater than $5$ of the form $4j+1,$ since
$p_i \mid q^2-1$ and $p_i \nmid q-1$ for all $i \in \{1,\dots,r\}.$
Let $S_r$ and $P_r$ be respectively the sum of the inverses and the product of the first $r$ primes of the form $4j+1,$ counting from $13.$ Since $\{ p_1,\ldots , p_r\}$ is a set of primes which divides $q+1$ and $5 \notin \{ p_1,\ldots , p_r\},$ then
$P_r \le \prod_{i=1}^r p_i \le q+1 < 2.7187 \cdot 10^{19}$, therefore
$r\le 11$. Let 
\[
\delta = 1 - 3 \sum_{i=1}^r \frac{1}{p_i} \ge 1 - 3\cdot S_r > 0.0938
\]
and $\Delta = 2 + \frac{3r-1}{\delta}<343.1514$. Observe that if
$q \ge (2^4 \cdot 343.1514 \cdot A_t^3)^{\frac{t}{t-3}}$ for some real number $t>3,$ then
$q \ge 2\cdot 343.1514 \cdot (2\cdot A_t \cdot q^{\frac1t})^3 > 2 \Delta W(e)^3$,
since, from Lemma \ref{cota-t}, $W(e) \le 2 W(q-1)< 2\cdot A_t \cdot q^{\frac1t}$.
For $t=5.5,$ the condition above becomes $q \ge 5.0381\cdot 10^{16}.$


We will use again Observation \ref{observation2}
with the ideas from Proposition \ref{bound3-2}.
Let $e$ be the product of the primes less than $29$ which divide $q^2-1$
(including its powers)
and
let $r$ be the number of primes greater or equal to 
$29$ which divide $q^2-1.$
We must choose $r$ such that
$P(29,r)<q^2-1<2.53825\cdot 10^{33},$
since we may suppose $q< 5.0381\cdot 10^{16}.$ Therefore $r \le 18.$ If $u=\omega(e),$
then $u \le 9,$ since there are $9$ primes less than $29.$ We also may suppose that $8 \mid e$ and $u \ge 1,$ since $8 \mid q^2-1$. 
Recall that $P_u$ is the product of the first $u$ prime numbers, so $4 P_u \le e.$
Choose
$r(u)=\max \{ r \mid 4\cdot P_u \cdot P(29,r) \le 2.53825\cdot 10^{33}\}.$ 
Let $\delta$ and $\Delta$ be as in the Observation \ref{observation2}.
Let also
$\delta(29,r(u))= 1 - 3 S(29,r(u))$ and $\Delta(29,r(u))= 2+ \dfrac{3r(u)-1}{\delta(29,r(u))}.$
Observe that 
$r \le r(u)\le r(1)$ and $\delta \ge \delta (29,r(u)) \ge \delta(29,r(1)) >0,$
so we may apply Observation \ref{observation2}.
Hence, we have
$2\Delta W(e)^3 \leq 2 \cdot \Delta(29,r(u)) \cdot 2^{3u}.$
From Observation \ref{observation2}, if
$q \ge 2 \cdot \Delta(29,r(u)) \cdot 2^{3u}$ then $(q,2)\in N_3.$
We obtain
\[
\max \{ 2 \cdot \Delta(29,r(u)) \cdot 2^{3u} \mid  1 \le u \le 9 \} < 7.245 \cdot 10^{10},\]
and we conclude that if $q \geq 7.245 \cdot 10^{10},$ 
then $(q,2)\in N_3.$

Now, suppose that $q < 7.245 \cdot 10^{10}$. Again, let $e$ be the product of the primes less than $19$ (including its powers) which divide $q^2-1$ and let $r$ be the number of primes greater or equal to 
$19$ which divide $q^2-1.$ We choose $r$ such that $P(19,r)<(q^2-1)/8<6.5613 \cdot 10^{20},$ since $8$ is a factor of $q^2-1.$ The same argument as before implies that $(q,2) \in N_3$ for $q \ge 6.615\cdot 10^8.$

Repeating this process with $q < 6.615\cdot 10^8$ and $e$ the product of the primes less than $19$ (including its powers) which divide $q^2-1,$ we get
$(q,2) \in N_3$ for $q \ge 3.0024\cdot 10^8.$

For smaller values, we can verify directly the condition $q \geq 2 \Delta W(e)^3.$ In fact, it holds for some value of $e,$ for every pair $(q,2),$ where $q$ is an odd prime power, except for a list of $1373$ values of $q.$ See Appendix \ref{appendixm3n2} for this complete list.

\end{proof}

\subsection{Proof of Theorem \ref{Teorema2}}\label{sectionproof2}


It only remains to verify the values of $(q,n)$ displayed in Table \ref{m3nmaior6} 
(with $n \ge 7$), and those that are not covered by Lemmas \ref{n6m3}, \ref{n5m3}, \ref{n4m3}, \ref{n3m3}, and \ref{n2m3} (with $2 \le n \le 6$).
We use Algorithm \ref{alg:two} to search $\alpha$ such that $\alpha,$ $\alpha+\beta,$ and $\alpha+2\beta$ are primitive, and either $\alpha,$ or $\alpha+\beta,$ or $\alpha+2\beta$ is normal, for all $\beta \in \Fq^*$ (by switching $\beta$ and $-\beta$, this task runtime is cut in half). The genuine exceptions found in these cases are $(q,n,\beta)$
displayed in Table \ref{table1}. 

{\small
\begin{table}[h]
\centering
\begin{tabular}{c|c}
	$(q,n)$ & Values of $\beta$ \\
	\hline
	$(3,2)$ & $\beta \in \F_3^*$ \\ \hline 
	$(5,2)$ & $\beta \in \F_5^*$ \\ \hline
	$(7,2)$ & $\beta \in \{\pm 2, \pm 3\}$ \\ \hline
	$(9,2)$ & $\beta \in \F_9^*$ \\ \hline
	$(11,2)$ & $\beta \in \F_{11}^*$ \\ \hline
	$(13,2)$ & $\beta \in \{\pm 1,\pm 4,\pm 5,\pm 6 \}$ \\ \hline
\end{tabular}
\hspace{0.3cm}
\begin{tabular}{c|c}
	$(q,n)$ & Values of $\beta$ \\ \hline
	$(3,3)$ & $\beta \in \F_{3}^*$ \\ \hline
	 	  & $\beta$ is any root \\
	$(9,3)$ & of the polynomials \\
		  & $x^2+2x+2, x^2+x+2 \in \F_3[x]$ \\ \hline
	$(3,4)$ & $\beta \in \F_{3}^*$ \\ \hline
	$(5,4)$ & $\beta \in \F_{5}^*$ \\ \hline 
\end{tabular}
\vspace*{0.5cm}
	\caption{Genuine exceptions for $\beta \in \Fq^*,$ where $\beta$ is a common difference of a 3-terms arithmetic progression formed by primitive elements, and one of the terms is normal.}
	\label{table1}
\end{table}
}
%

\subsection{Proof of Corollary \ref{3primitivospa}}

This corollary follows from Theorem \ref{Teorema2}, by verifying the exceptions. We use Algorithm \ref{alg:two}, removing lines 
\ref{L15} and \ref{L17},
 to search $\alpha$ such that $\alpha,$ $\alpha+\beta$, and $\alpha+2\beta$ are primitive, for all $\beta \in \Fq^*$ (by switching $\beta$ and $-\beta$, this task runtime is cut in half).
The genuine exceptions found in these cases are $(q,n,\beta)$
displayed in Table \ref{tablecorol1}. 

{\small
\begin{table}[h]
\centering
\begin{tabular}{c|c}
	$(q,n)$ & Values of $\beta$ \\
	\hline
	$(3,2)$ & $\beta \in \F_3^*$ \\ \hline 
	$(5,2)$ & $\beta \in \F_5^*$ \\ \hline
	$(7,2)$ & $\beta \in \{\pm 2, \pm 3\}$ \\ \hline
	$(9,2)$ & $\beta \in \F_9^*$ \\ \hline
	$(11,2)$ & $\beta \in \F_{11}^*$ \\ \hline
\end{tabular} 
\hspace{0.3cm}
\begin{tabular}{c|c}
	$(q,n)$ & Values of $\beta$ \\
	\hline
	$(13,2)$ & $\beta \in \{\pm 1,\pm 4,\pm 5,\pm 6 \}$ \\ \hline
	 	  & $\beta$ is any root \\
	$(9,3)$ & of the polynomials \\
		  & $x^2+2x+2, x^2+x+2 \in \F_3[x]$ \\ \hline
	$(3,4)$ & $\beta \in \F_3^*$ \\ \hline
\end{tabular}
\vspace*{0.5cm}
	\caption{Genuine exceptions for $\beta \in \Fq^*,$ where $\beta$ is a common difference of a 3-terms arithmetic progression formed by primitive elements.}
	\label{tablecorol1}
\end{table}
}

%


\section{The case $m=2$}\label{sec:m=2}

We now deal with the case $m=2$. By Observation \ref{obs:encaixe}, we have that $N_3 \subset N_2$. Thus, we just need to check the values $(q,n)$ with $q$ odd that fail in Theorems \ref{Teorema1}(ii) and \ref{Teorema2}, in addition to the powers of 2. By Proposition \ref{bound2-1}, for $q$ even, it is only required to verify the powers of 2 up to $7.51 \cdot 10^{358}.$
\subsection{Proof of Theorem \ref{Teorema1}(i) for $q$ odd}
We verify the inequality $q^{\frac n2} \ge 2 \Delta W(g) W(e)^2$ (see Proposition \ref{sieve-prop}) for the 1532 
pairs $(q,n)$ that possibly fail in Theorem \ref{Teorema1}(ii), 
considering every divisor $g$ of $x^n - 1$ and every divisor $e$ of $q^n - 1$. From these, all but the 155 
pairs displayed in Table \ref{tablem2qodd} work for this test.

{\small
\begin{table}[H]
\centering
\begin{tabular}{c|c}
$n$ & $q$ \\
\hline
$12$ & $5,3$ \\ \hline 
$10$ & $3$ \\ \hline 
$8$ & $9,5,3$ \\ \hline 
$7$ & $3$ \\ \hline 
$6$ & $11,9,7,5,3$ \\ \hline 
$5$ & $11,7,5,3$ \\ \hline 
 & $43,41,29,23,$ \\
$4$ & $19,17,13,11,$ \\
 & $9,7,5,3$ \\ \hline 
 & $211,121,67,$ \\ 
$3$ & $61,43,37,31,$ \\
 & $25,23,19,13,$ \\
 & $11,9,7,5,3$ \\ \hline 
\end{tabular}
\hspace{0.3cm}
\begin{tabular}{c|c}
$n$ & $q$ \\
\hline
 & $3191, 2729, 2311, 2029, 1871, 1861,1849,$ \\ 
 & $1709, 1429, 1331, 1301, 1289, 1259,1231,$ \\ 
 & $1091, 1021, 911, 881, 859, 811,769, 701,$ \\ 
 & $659, 631, 601, 599, 571, 529,521, 509,$ \\ 
 & $491, 463, 461, 449, 439, 421,419, 409,$ \\ 
 & $389, 379, 373, 349, 337,331, 311, 307,$ \\ 
$2$ & $293, 289, 281, 271, 269,263, 251, 241,$ \\ 
 & $239, 233, 229, 227,223, 211, 199, 197,$ \\ 
 & $191, 181, 179, 173,169, 167, 157, 151,$ \\ 
 & $149, 139, 137,131, 127, 125, 121, 113,$ \\ 
 & $109, 107, 103,101, 97, 89, 83, 81, 79,73,$ \\ 
 & $71, 67,61, 59, 53, 49, 47, 43, 41,37, 31,$ \\ 
 & $29, 27, 25, 23, 19, 17, 13, 11,9, 7, 5, 3$ \\ \hline
\end{tabular} 
\vspace*{0.5cm}
	\caption{Possible exceptions for Theorem \ref{Teorema1}(i) with $q$ odd.}
	\label{tablem2qodd}
\end{table}
}


\subsection{Proof of Theorem \ref{Teorema1}(i) for $q$ even}\label{teo1iqeven}

Let $q = 2^k$ with $1 \le k \le 596,$ and let $n \ge 2,$ such that $q^n < 7.51 \cdot 10^{358}.$ We apply the first of the following methods for the pairs $(q,n)$ and the second one for those pairs where the first method fails.

\begin{itemize}
\item By Lemma \ref{lem:3.7}, it follows that for $q=2$ it holds $W(x^n-1) \le 2^{\frac{n+14}{5}},$ for $q=4$ it holds $W(x^n-1) \le 2^{\frac{n+41}{4}},$ for $q=8$ it holds $W(x^n-1) \le 2^{\frac{n}{3}+14},$ for $q=16$ it holds $W(x^n-1) \le 2^{\frac{n+15}{2}}$, and for $q \ge 32$ it holds $W(x^n-1) \le 2^n.$ Taking into account that $q^n-1$ is odd, the constant $A_t$ given by Lemma \ref{cota-t} does not consider the prime $\wp = 2.$ We then verify if $q^{\frac n2} \ge 2W(x^n-1) A_t^2 q^{\frac{2n}{t}}$ using the bounds for $A_t$ and for $W(x^n-1)$, 
and apply Theorem \ref{principal}.

\item Let $p_0$ be a fixed prime number, $L$ be the product of primes up to $p_0$ that divide $q^n-1,$ and $T = \frac{q^n-1}{\lcm(q^n-1,L)}.$ Let $s \in \mathbb N$ be the largest possible so that $p_0 < p_1 < \dots < p_s$ are consecutive primes with $p_0 p_1 \dots p_s \le T,$ and $\delta \ge \delta_T := 1 - 2 \sum_{i=0}^s \frac{1}{p_i} > 0.$ We then verify if $q^{\frac{n}{2}} \geq m \Delta_T W(x^n-1) W(e)^m,$ where $\Delta_T := 2+\frac{2s-1}{\delta_T} \ge \Delta,$ and apply Proposition \ref{sieve-prop}.
\end{itemize}


If we choose $t=10$ and $p_0=61,$ then at least one of the previous methods work except for the pairs displayed in Table \ref{tablem2qeven}.

{\small
\begin{table}[H]
\centering
\begin{tabular}{c|c}
$q$ & $n$ \\
\hline
 & $36,30,28,24,21,20,$ \\
$2$ & $18,16,15,14,12,11,$ \\
 & $10,9,8,7,6,5,4,3,2$ \\ \hline
$4$ & $18,15,14,12,10,9,$ \\
 & $8,7,6,5,4,3,2$ \\ \hline
\end{tabular}
\hspace{0.3cm}
\begin{tabular}{c|c}
$q$ & $n$ \\
\hline
$8$ & $8,7,6,5,4,3,2$ \\ \hline
$16$ & $15,10,9,7,6,5,$ \\
 & $4,3,2$ \\ \hline
$32$ & $4,3,2$ \\ \hline
$64$ & $6,4,3,2$ \\ \hline
\end{tabular}
\hspace{0.3cm}
\begin{tabular}{c|c}
$q$ & $n$ \\
\hline
$128$ & $2$ \\ \hline
$256$ & $5,3,2$ \\ \hline
$512$ & $2$ \\ \hline
$1024$ & $2$ \\ \hline
$4096$ & $3,2$ \\ \hline
\end{tabular} 
\vspace*{0.5cm}
	\caption{Partial possible exceptions for Theorem \ref{Teorema1}(i) with $q$ even.}
	\label{tablem2qeven}
\end{table}
}

We verify the inequality $q^{\frac n2} \ge 2 \Delta W(g) W(e)^2$ (see Proposition \ref{sieve-prop}) for the pairs $(q,n)$ of Table \ref{tablem2qeven}, considering every divisor $g$ of $x^n - 1$ and every divisor $e$ of $q^n - 1$. From these, all but the pairs displayed in Table \ref{tablem2qevenfinal} work for this test.

{\small
\begin{table}[H]
\centering
\begin{tabular}{c|c}
$q$ & $n$ \\
\hline
$2$ & $14,12,10,9,8,$ \\
 & $7,6,5,4,3,2$ \\ \hline
$4$ & $12,9,6,5,$ \\
 & $4,3,2$ \\ \hline
\end{tabular}
\hspace{0.3cm}
\begin{tabular}{c|c}
$q$ & $n$ \\
\hline
$8$ & $4,2$ \\ \hline
$16$ & $3,2$ \\ \hline
$32$ & $2$ \\ \hline
$64$ & $2$ \\ \hline
\end{tabular} 
\vspace*{0.5cm}
	\caption{Possible exceptions for Theorem \ref{Teorema1}(i) with $q$ even.}
	\label{tablem2qevenfinal}
\end{table}
}

\subsection{Proof of Theorem \ref{Teorema3}}

It only remains to verify the values of $(q,n)$ displayed in Tables \ref{tablem2qodd} and \ref{tablem2qevenfinal}. We remove line \ref{L13} and adapt lines \ref{L11}, \ref{L14} and \ref{L15} of Algorithm \ref{alg:two} for $q$ odd,
and we also adapt line \ref{L4} in the case where $q$ is even, to search a primitive element $\alpha \in \Fqn$ such that $\alpha+\beta$ is also primitive and either $\alpha$ or $\alpha+\beta$ is normal.  
The only genuine exception found in this case is $(q,n,\beta) = (2,4,1).$ It completes the proof of Theorem \ref{Teorema3}.

\subsection{Proof of Corollary \ref{2primitivospa}}

This corollary follows directly from Theorem \ref{Teorema3} and \cite[Theorem A]{Cohen1985c}, since for $\beta=1$ we deal with consecutive elements.

\appendix \section{Possible exceptions for Theorem \ref{Teorema1}(ii) with $n=2$}\label{appendixm3n2}

In this appendix, it is displayed the 1373 values of $q$ that are possible exceptions for Theorem \ref{Teorema1}(ii) with $n=2$; see Lemma \ref{n2m3}.

\noindent {\scriptsize 
$3,$ 
 $5,$ 
 $7,$ 
 $9,$ 
 $11,$ 
 $13,$ 
 $17,$ 
 $19,$ 
 $23,$ 
 $25,$ 
 $27,$ 
 $29,$ 
 $31,$ 
 $37,$ 
 $41,$ 
 $43,$ 
 $47,$ 
 $49,$ 
 $53,$ 
 $59,$ 
 $61,$ 
 $67,$ 
 $71,$ 
 $73,$ 
 $79,$ 
 $81,$ 
 $83,$ 
 $89,$ 
 $97,$ 
 $101,$ 
 $103,$ 
 $107,$ 
 $109,$ 
 $113,$ 
 $121,$ 
 $125,$ 
 $127,$ 
 $131,$ 
 $137,$ 
 $139,$ 
 $149,$ 
 $151,$ 
 $157,$ 
 $163,$ 
 $167,$ 
 $169,$ 
 $173,$ 
 $179,$ 
 $181,$ 
 $191,$ 
 $193,$ 
 $197,$ 
 $199,$ 
 $211,$ 
 $223,$ 
 $227,$ 
 $229,$ 
 $233,$ 
 $239,$ 
 $241,$ 
 $251,$ 
 $257,$ 
 $263,$ 
 $269,$ 
 $271,$ 
 $277,$ 
 $281,$ 
 $283,$ 
 $289,$ 
 $293,$ 
 $307,$ 
 $311,$ 
 $313,$ 
 $317,$ 
 $331,$ 
 $337,$ 
 $343,$ 
 $347,$ 
 $349,$ 
 $353,$ 
 $359,$ 
 $361,$ 
 $367,$ 
 $373,$ 
 $379,$ 
 $383,$ 
 $389,$ 
 $397,$ 
 $401,$ 
 $409,$ 
 $419,$ 
 $421,$ 
 $431,$ 
 $433,$ 
 $439,$ 
 $443,$ 
 $449,$ 
 $457,$ 
 $461,$ 
 $463,$ 
 $467,$ 
 $479,$ 
 $487,$ 
 $491,$ 
 $499,$ 
 $503,$ 
 $509,$ 
 $521,$ 
 $523,$ 
 $529,$ 
 $541,$ 
 $547,$ 
 $557,$ 
 $563,$ 
 $569,$ 
 $571,$ 
 $587,$ 
 $593,$ 
 $599,$ 
 $601,$ 
 $607,$ 
 $613,$ 
 $617,$ 
 $619,$ 
 $625,$ 
 $631,$ 
 $641,$ 
 $643,$ 
 $647,$ 
 $653,$ 
 $659,$ 
 $661,$ 
 $673,$ 
 $677,$ 
 $683,$ 
 $691,$ 
 $701,$ 
 $709,$ 
 $719,$ 
 $727,$ 
 $729,$ 
 $733,$ 
 $739,$ 
 $743,$ 
 $751,$ 
 $757,$ 
 $761,$ 
 $769,$ 
 $773,$ 
 $787,$ 
 $797,$ 
 $809,$ 
 $811,$ 
 $821,$ 
 $823,$ 
 $827,$ 
 $829,$ 
 $839,$ 
 $841,$ 
 $853,$ 
 $857,$ 
 $859,$ 
 $877,$ 
 $881,$ 
 $883,$ 
 $887,$ 
 $907,$ 
 $911,$ 
 $919,$ 
 $929,$ 
 $937,$ 
 $941,$ 
 $947,$ 
 $953,$ 
 $961,$ 
 $967,$ 
 $971,$ 
 $991,$ 
 $1009,$ 
 $1013,$ 
 $1019,$ 
 $1021,$ 
 $1031,$ 
 $1033,$ 
 $1039,$ 
 $1049,$ 
 $1051,$ 
 $1061,$ 
 $1063,$ 
 $1069,$ 
 $1091,$ 
 $1093,$ 
 $1103,$ 
 $1109,$ 
 $1117,$ 
 $1123,$ 
 $1129,$ 
 $1151,$ 
 $1163,$ 
 $1171,$ 
 $1181,$ 
 $1201,$ 
 $1217,$ 
 $1223,$ 
 $1229,$ 
 $1231,$ 
 $1249,$ 
 $1259,$ 
 $1277,$ 
 $1279,$ 
 $1289,$ 
 $1291,$ 
 $1301,$ 
 $1303,$ 
 $1319,$ 
 $1321,$ 
 $1327,$ 
 $1331,$ 
 $1361,$ 
 $1369,$ 
 $1373,$ 
 $1381,$ 
 $1399,$ 
 $1409,$ 
 $1427,$ 
 $1429,$ 
 $1439,$ 
 $1451,$ 
 $1459,$ 
 $1471,$ 
 $1481,$ 
 $1483,$ 
 $1489,$ 
 $1499,$ 
 $1511,$ 
 $1531,$ 
 $1549,$ 
 $1553,$ 
 $1559,$ 
 $1567,$ 
 $1571,$ 
 $1579,$ 
 $1583,$ 
 $1597,$ 
 $1601,$ 
 $1607,$ 
 $1609,$ 
 $1613,$ 
 $1619,$ 
 $1621,$ 
 $1627,$ 
 $1637,$ 
 $1667,$ 
 $1669,$ 
 $1681,$ 
 $1693,$ 
 $1699,$ 
 $1709,$ 
 $1721,$ 
 $1723,$ 
 $1741,$ 
 $1747,$ 
 $1759,$ 
 $1777,$ 
 $1789,$ 
 $1801,$ 
 $1811,$ 
 $1831,$ 
 $1847,$ 
 $1849,$ 
 $1861,$ 
 $1871,$ 
 $1877,$ 
 $1879,$ 
 $1889,$ 
 $1901,$ 
 $1913,$ 
 $1931,$ 
 $1933,$ 
 $1949,$ 
 $1951,$ 
 $1973,$ 
 $1979,$ 
 $1987,$ 
 $1999,$ 
 $2003,$ 
 $2011,$ 
 $2029,$ 
 $2039,$ 
 $2069,$ 
 $2081,$ 
 $2087,$ 
 $2089,$ 
 $2099,$ 
 $2111,$ 
 $2113,$ 
 $2129,$ 
 $2131,$ 
 $2141,$ 
 $2143,$ 
 $2161,$ 
 $2179,$ 
 $2197,$ 
 $2209,$ 
 $2213,$ 
 $2221,$ 
 $2239,$ 
 $2243,$ 
 $2267,$ 
 $2269,$ 
 $2281,$ 
 $2287,$ 
 $2297,$ 
 $2309,$ 
 $2311,$ 
 $2339,$ 
 $2341,$ 
 $2351,$ 
 $2371,$ 
 $2381,$ 
 $2389,$ 
 $2393,$ 
 $2399,$ 
 $2411,$ 
 $2437,$ 
 $2441,$ 
 $2459,$ 
 $2521,$ 
 $2531,$ 
 $2539,$ 
 $2549,$ 
 $2551,$ 
 $2579,$ 
 $2591,$ 
 $2609,$ 
 $2617,$ 
 $2621,$ 
 $2659,$ 
 $2671,$ 
 $2687,$ 
 $2689,$ 
 $2699,$ 
 $2711,$ 
 $2719,$ 
 $2729,$ 
 $2731,$ 
 $2741,$ 
 $2749,$ 
 $2789,$ 
 $2791,$ 
 $2801,$ 
 $2809,$ 
 $2819,$ 
 $2851,$ 
 $2857,$ 
 $2861,$ 
 $2909,$ 
 $2927,$ 
 $2939,$ 
 $2969,$ 
 $2971,$ 
 $3001,$ 
 $3011,$ 
 $3037,$ 
 $3041,$ 
 $3049,$ 
 $3061,$ 
 $3067,$ 
 $3079,$ 
 $3089,$ 
 $3109,$ 
 $3119,$ 
 $3121,$ 
 $3163,$ 
 $3169,$ 
 $3181,$ 
 $3191,$ 
 $3221,$ 
 $3229,$ 
 $3251,$ 
 $3299,$ 
 $3301,$ 
 $3319,$ 
 $3329,$ 
 $3331,$ 
 $3359,$ 
 $3361,$ 
 $3389,$ 
 $3391,$ 
 $3433,$ 
 $3449,$ 
 $3457,$ 
 $3469,$ 
 $3481,$ 
 $3499,$ 
 $3511,$ 
 $3529,$ 
 $3539,$ 
 $3541,$ 
 $3571,$ 
 $3613,$ 
 $3631,$ 
 $3659,$ 
 $3671,$ 
 $3691,$ 
 $3697,$ 
 $3709,$ 
 $3719,$ 
 $3739,$ 
 $3761,$ 
 $3769,$ 
 $3779,$ 
 $3821,$ 
 $3851,$ 
 $3877,$ 
 $3911,$ 
 $3919,$ 
 $3989,$ 
 $4001,$ 
 $4003,$ 
 $4019,$ 
 $4049,$ 
 $4079,$ 
 $4091,$ 
 $4129,$ 
 $4159,$ 
 $4201,$ 
 $4211,$ 
 $4219,$ 
 $4229,$ 
 $4231,$ 
 $4241,$ 
 $4271,$ 
 $4289,$ 
 $4339,$ 
 $4409,$ 
 $4421,$ 
 $4423,$ 
 $4451,$ 
 $4481,$ 
 $4489,$ 
 $4523,$ 
 $4549,$ 
 $4591,$ 
 $4621,$ 
 $4649,$ 
 $4663,$ 
 $4679,$ 
 $4691,$ 
 $4729,$ 
 $4751,$ 
 $4759,$ 
 $4789,$ 
 $4801,$ 
 $4817,$ 
 $4831,$ 
 $4861,$ 
 $4871,$ 
 $4889,$ 
 $4931,$ 
 $4951,$ 
 $4969,$ 
 $4999,$ 
 $5011,$ 
 $5039,$ 
 $5041,$ 
 $5059,$ 
 $5081,$ 
 $5167,$ 
 $5171,$ 
 $5179,$ 
 $5209,$ 
 $5237,$ 
 $5279,$ 
 $5281,$ 
 $5329,$ 
 $5381,$ 
 $5419,$ 
 $5431,$ 
 $5479,$ 
 $5501,$ 
 $5519,$ 
 $5521,$ 
 $5531,$ 
 $5591,$ 
 $5641,$ 
 $5659,$ 
 $5669,$ 
 $5711,$ 
 $5741,$ 
 $5839,$ 
 $5849,$ 
 $5851,$ 
 $5879,$ 
 $5881,$ 
 $5939,$ 
 $5981,$ 
 $6007,$ 
 $6029,$ 
 $6089,$ 
 $6091,$ 
 $6131,$ 
 $6203,$ 
 $6221,$ 
 $6229,$ 
 $6241,$ 
 $6269,$ 
 $6271,$ 
 $6299,$ 
 $6301,$ 
 $6329,$ 
 $6359,$ 
 $6379,$ 
 $6421,$ 
 $6449,$ 
 $6469,$ 
 $6481,$ 
 $6491,$ 
 $6551,$ 
 $6553,$ 
 $6571,$ 
 $6581,$ 
 $6599,$ 
 $6679,$ 
 $6689,$ 
 $6691,$ 
 $6709,$ 
 $6719,$ 
 $6733,$ 
 $6761,$ 
 $6791,$ 
 $6841,$ 
 $6859,$ 
 $6869,$ 
 $6889,$ 
 $6917,$ 
 $6959,$ 
 $6971,$ 
 $6991,$ 
 $7001,$ 
 $7019,$ 
 $7039,$ 
 $7069,$ 
 $7129,$ 
 $7151,$ 
 $7211,$ 
 $7229,$ 
 $7237,$ 
 $7253,$ 
 $7309,$ 
 $7321,$ 
 $7331,$ 
 $7349,$ 
 $7351,$ 
 $7369,$ 
 $7411,$ 
 $7459,$ 
 $7481,$ 
 $7489,$ 
 $7541,$ 
 $7547,$ 
 $7549,$ 
 $7559,$ 
 $7561,$ 
 $7589,$ 
 $7591,$ 
 $7639,$ 
 $7669,$ 
 $7699,$ 
 $7741,$ 
 $7789,$ 
 $7829,$ 
 $7841,$ 
 $7853,$ 
 $7879,$ 
 $7919,$ 
 $7921,$ 
 $7951,$ 
 $8009,$ 
 $8059,$ 
 $8161,$ 
 $8171,$ 
 $8191,$ 
 $8219,$ 
 $8231,$ 
 $8269,$ 
 $8329,$ 
 $8429,$ 
 $8431,$ 
 $8501,$ 
 $8513,$ 
 $8527,$ 
 $8539,$ 
 $8581,$ 
 $8609,$ 
 $8669,$ 
 $8681,$ 
 $8689,$ 
 $8737,$ 
 $8741,$ 
 $8761,$ 
 $8779,$ 
 $8819,$ 
 $8821,$ 
 $8839,$ 
 $8849,$ 
 $8861,$ 
 $8929,$ 
 $8969,$ 
 $8971,$ 
 $9001,$ 
 $9029,$ 
 $9041,$ 
 $9043,$ 
 $9049,$ 
 $9059,$ 
 $9109,$ 
 $9151,$ 
 $9199,$ 
 $9239,$ 
 $9241,$ 
 $9281,$ 
 $9283,$ 
 $9311,$ 
 $9349,$ 
 $9371,$ 
 $9409,$ 
 $9421,$ 
 $9437,$ 
 $9439,$ 
 $9461,$ 
 $9463,$ 
 $9479,$ 
 $9491,$ 
 $9521,$ 
 $9547,$ 
 $9619,$ 
 $9631,$ 
 $9661,$ 
 $9689,$ 
 $9769,$ 
 $9791,$ 
 $9811,$ 
 $9829,$ 
 $9857,$ 
 $9859,$ 
 $9871,$ 
 $9931,$ 
 $9941,$ 
 $10009,$ 
 $10039,$ 
 $10061,$ 
 $10079,$ 
 $10099,$ 
 $10139,$ 
 $10141,$ 
 $10151,$ 
 $10259,$ 
 $10271,$ 
 $10289,$ 
 $10321,$ 
 $10331,$ 
 $10429,$ 
 $10459,$ 
 $10499,$ 
 $10501,$ 
 $10529,$ 
 $10601,$ 
 $10609,$ 
 $10639,$ 
 $10709,$ 
 $10711,$ 
 $10739,$ 
 $10781,$ 
 $10789,$ 
 $10891,$ 
 $10949,$ 
 $10979,$ 
 $11059,$ 
 $11131,$ 
 $11159,$ 
 $11171,$ 
 $11299,$ 
 $11311,$ 
 $11329,$ 
 $11351,$ 
 $11369,$ 
 $11411,$ 
 $11491,$ 
 $11549,$ 
 $11551,$ 
 $11579,$ 
 $11593,$ 
 $11621,$ 
 $11681,$ 
 $11689,$ 
 $11719,$ 
 $11731,$ 
 $11779,$ 
 $11789,$ 
 $11801,$ 
 $11831,$ 
 $11881,$ 
 $11941,$ 
 $11959,$ 
 $11969,$ 
 $11971,$ 
 $12011,$ 
 $12041,$ 
 $12109,$ 
 $12167,$ 
 $12211,$ 
 $12239,$ 
 $12391,$ 
 $12401,$ 
 $12409,$ 
 $12451,$ 
 $12479,$ 
 $12511,$ 
 $12539,$ 
 $12541,$ 
 $12641,$ 
 $12671,$ 
 $12689,$ 
 $12739,$ 
 $12769,$ 
 $12781,$ 
 $12791,$ 
 $12809,$ 
 $12919,$ 
 $12959,$ 
 $12979,$ 
 $13001,$ 
 $13049,$ 
 $13109,$ 
 $13159,$ 
 $13259,$ 
 $13331,$ 
 $13339,$ 
 $13397,$ 
 $13399,$ 
 $13411,$ 
 $13421,$ 
 $13441,$ 
 $13469,$ 
 $13649,$ 
 $13679,$ 
 $13691,$ 
 $13729,$ 
 $13789,$ 
 $13831,$ 
 $13859,$ 
 $13901,$ 
 $13931,$ 
 $14029,$ 
 $14071,$ 
 $14249,$ 
 $14251,$ 
 $14281,$ 
 $14321,$ 
 $14419,$ 
 $14431,$ 
 $14449,$ 
 $14461,$ 
 $14489,$ 
 $14519,$ 
 $14561,$ 
 $14629,$ 
 $14741,$ 
 $14771,$ 
 $14821,$ 
 $14851,$ 
 $14869,$ 
 $14939,$ 
 $14951,$ 
 $15091,$ 
 $15131,$ 
 $15149,$ 
 $15161,$ 
 $15259,$ 
 $15289,$ 
 $15329,$ 
 $15331,$ 
 $15391,$ 
 $15401,$ 
 $15443,$ 
 $15511,$ 
 $15541,$ 
 $15569,$ 
 $15581,$ 
 $15619,$ 
 $15641,$ 
 $15679,$ 
 $15731,$ 
 $15749,$ 
 $15791,$ 
 $15809,$ 
 $15889,$ 
 $15919,$ 
 $15959,$ 
 $16141,$ 
 $16301,$ 
 $16339,$ 
 $16381,$ 
 $16421,$ 
 $16451,$ 
 $16519,$ 
 $16561,$ 
 $16619,$ 
 $16631,$ 
 $16661,$ 
 $16729,$ 
 $16759,$ 
 $16829,$ 
 $16831,$ 
 $16871,$ 
 $17029,$ 
 $17137,$ 
 $17159,$ 
 $17161,$ 
 $17291,$ 
 $17341,$ 
 $17359,$ 
 $17389,$ 
 $17401,$ 
 $17471,$ 
 $17569,$ 
 $17579,$ 
 $17599,$ 
 $17669,$ 
 $17681,$ 
 $17851,$ 
 $17863,$ 
 $17921,$ 
 $18041,$ 
 $18059,$ 
 $18061,$ 
 $18089,$ 
 $18121,$ 
 $18131,$ 
 $18149,$ 
 $18199,$ 
 $18229,$ 
 $18269,$ 
 $18329,$ 
 $18451,$ 
 $18461,$ 
 $18481,$ 
 $18539,$ 
 $18661,$ 
 $18719,$ 
 $18859,$ 
 $18869,$ 
 $18899,$ 
 $19031,$ 
 $19081,$ 
 $19139,$ 
 $19141,$ 
 $19181,$ 
 $19249,$ 
 $19319,$ 
 $19321,$ 
 $19381,$ 
 $19447,$ 
 $19469,$ 
 $19489,$ 
 $19501,$ 
 $19531,$ 
 $19559,$ 
 $19571,$ 
 $19609,$ 
 $19739,$ 
 $19759,$ 
 $19889,$ 
 $19949,$ 
 $19991,$ 
 $20021,$ 
 $20089,$ 
 $20129,$ 
 $20149,$ 
 $20161,$ 
 $20201,$ 
 $20231,$ 
 $20369,$ 
 $20399,$ 
 $20411,$ 
 $20747,$ 
 $20749,$ 
 $20789,$ 
 $20879,$ 
 $20929,$ 
 $21011,$ 
 $21139,$ 
 $21169,$ 
 $21319,$ 
 $21391,$ 
 $21419,$ 
 $21559,$ 
 $21589,$ 
 $21713,$ 
 $21757,$ 
 $21839,$ 
 $21841,$ 
 $21911,$ 
 $22079,$ 
 $22133,$ 
 $22259,$ 
 $22441,$ 
 $22469,$ 
 $22541,$ 
 $22571,$ 
 $22639,$ 
 $22679,$ 
 $22751,$ 
 $22861,$ 
 $22961,$ 
 $23011,$ 
 $23029,$ 
 $23087,$ 
 $23099,$ 
 $23143,$ 
 $23189,$ 
 $23269,$ 
 $23311,$ 
 $23321,$ 
 $23561,$ 
 $23563,$ 
 $23629,$ 
 $23689,$ 
 $23827,$ 
 $23869,$ 
 $23981,$ 
 $24179,$ 
 $24359,$ 
 $24389,$ 
 $24509,$ 
 $24571,$ 
 $24611,$ 
 $24649,$ 
 $24683,$ 
 $24709,$ 
 $24821,$ 
 $24851,$ 
 $25117,$ 
 $25171,$ 
 $25229,$ 
 $25339,$ 
 $25409,$ 
 $25411,$ 
 $25439,$ 
 $25453,$ 
 $25609,$ 
 $25621,$ 
 $25741,$ 
 $25801,$ 
 $25999,$ 
 $26041,$ 
 $26321,$ 
 $26489,$ 
 $26641,$ 
 $26839,$ 
 $26861,$ 
 $26951,$ 
 $27061,$ 
 $27091,$ 
 $27259,$ 
 $27481,$ 
 $27509,$ 
 $27551,$ 
 $27611,$ 
 $27691,$ 
 $28181,$ 
 $28211,$ 
 $28289,$ 
 $28309,$ 
 $28559,$ 
 $28729,$ 
 $28909,$ 
 $29231,$ 
 $29303,$ 
 $29581,$ 
 $29611,$ 
 $29819,$ 
 $30029,$ 
 $30059,$ 
 $30161,$ 
 $30211,$ 
 $30269,$ 
 $30449,$ 
 $30689,$ 
 $30911,$ 
 $30941,$ 
 $31121,$ 
 $31151,$ 
 $31219,$ 
 $31541,$ 
 $31891,$ 
 $32059,$ 
 $32299,$ 
 $32341,$ 
 $32369,$ 
 $32579,$ 
 $32719,$ 
 $32801,$ 
 $32941,$ 
 $32969,$ 
 $33151,$ 
 $33349,$ 
 $33461,$ 
 $33529,$ 
 $33769,$ 
 $33851,$ 
 $34033,$ 
 $34061,$ 
 $34231,$ 
 $34511,$ 
 $34649,$ 
 $34781,$ 
 $35069,$ 
 $35099,$ 
 $35111,$ 
 $35281,$ 
 $35419,$ 
 $35491,$ 
 $35531,$ 
 $35671,$ 
 $35729,$ 
 $35771,$ 
 $35869,$ 
 $36191,$ 
 $36541,$ 
 $36709,$ 
 $36721,$ 
 $36791,$ 
 $36821,$ 
 $36919,$ 
 $37309,$ 
 $37379,$ 
 $37619,$ 
 $38011,$ 
 $38039,$ 
 $38149,$ 
 $38219,$ 
 $38501,$ 
 $38569,$ 
 $38611,$ 
 $38851,$ 
 $39439,$ 
 $39521,$ 
 $39929,$ 
 $40039,$ 
 $40151,$ 
 $40459,$ 
 $40699,$ 
 $40949,$ 
 $41341,$ 
 $41411,$ 
 $41539,$ 
 $41651,$ 
 $41999,$ 
 $42181,$ 
 $42461,$ 
 $42701,$ 
 $42979,$ 
 $43499,$ 
 $43889,$ 
 $43891,$ 
 $44269,$ 
 $44549,$ 
 $44771,$ 
 $45121,$ 
 $45319,$ 
 $45541,$ 
 $46171,$ 
 $46229,$ 
 $46411,$ 
 $46619,$ 
 $47059,$ 
 $47431,$ 
 $47501,$ 
 $47741,$ 
 $48049,$ 
 $48179,$ 
 $48299,$ 
 $48619,$ 
 $49279,$ 
 $49477,$ 
 $49531,$ 
 $49741,$ 
 $49939,$ 
 $50051,$ 
 $50231,$ 
 $51169,$ 
 $51239,$ 
 $51479,$ 
 $51869,$ 
 $51871,$ 
 $52051,$ 
 $52249,$ 
 $52361,$ 
 $53129,$ 
 $53299,$ 
 $53381,$ 
 $53549,$ 
 $53591,$ 
 $53899,$ 
 $54251,$ 
 $54419,$ 
 $54559,$ 
 $54601,$ 
 $54979,$ 
 $55021,$ 
 $55441,$ 
 $55691,$ 
 $55901,$ 
 $55931,$ 
 $56099,$ 
 $56239,$ 
 $56629,$ 
 $56671,$ 
 $57331,$ 
 $58631,$ 
 $59149,$ 
 $59669,$ 
 $60521,$ 
 $60719,$ 
 $60761,$ 
 $61879,$ 
 $61909,$ 
 $62581,$ 
 $62791,$ 
 $62929,$ 
 $63799,$ 
 $64091,$ 
 $65449,$ 
 $65519,$ 
 $65701,$ 
 $66221,$ 
 $66571,$ 
 $66821,$ 
 $67339,$ 
 $67759,$ 
 $67829,$ 
 $68881,$ 
 $69959,$ 
 $70379,$ 
 $70489,$ 
 $71059,$ 
 $71161,$ 
 $72269,$ 
 $73039,$ 
 $73529,$ 
 $74101,$ 
 $75011,$ 
 $75109,$ 
 $75991,$ 
 $76231,$ 
 $76259,$ 
 $76649,$ 
 $77141,$ 
 $77351,$ 
 $77419,$ 
 $78079,$ 
 $78121,$ 
 $78539,$ 
 $78541,$ 
 $78779,$ 
 $80599,$ 
 $80989,$ 
 $81509,$ 
 $81619,$ 
 $81929,$ 
 $82279,$ 
 $83579,$ 
 $83621,$ 
 $84239,$ 
 $84391,$ 
 $84421,$ 
 $84589,$ 
 $84811,$ 
 $85331,$ 
 $85469,$ 
 $85931,$ 
 $88661,$ 
 $88969,$ 
 $89909,$ 
 $90089,$ 
 $90271,$ 
 $90481,$ 
 $91631,$ 
 $91909,$ 
 $93059,$ 
 $93479,$ 
 $93941,$ 
 $94249,$ 
 $95369,$ 
 $96461,$ 
 $97579,$ 
 $100129,$ 
 $102101,$ 
 $102409,$ 
 $102829,$ 
 $103291,$ 
 $104651,$ 
 $104831,$ 
 $105071,$ 
 $105379,$ 
 $106261,$ 
 $106721,$ 
 $106861,$ 
 $107339,$ 
 $108109,$ 
 $108289,$ 
 $110629,$ 
 $111229,$ 
 $111341,$ 
 $111539,$ 
 $112111,$ 
 $114269,$ 
 $114311,$ 
 $114479,$ 
 $115361,$ 
 $115499,$ 
 $116089,$ 
 $116381,$ 
 $116689,$ 
 $117571,$ 
 $117809,$ 
 $118691,$ 
 $118931,$ 
 $119659,$ 
 $120121,$ 
 $120889,$ 
 $122849,$ 
 $123311,$ 
 $123551,$ 
 $125399,$ 
 $128591,$ 
 $131671,$ 
 $132329,$ 
 $132859,$ 
 $133979,$ 
 $133981,$ 
 $134639,$ 
 $135409,$ 
 $135829,$ 
 $136709,$ 
 $137941,$ 
 $138139,$ 
 $141371,$ 
 $141679,$ 
 $142969,$ 
 $143261,$ 
 $144299,$ 
 $145991,$ 
 $146299,$ 
 $146719,$ 
 $147629,$ 
 $148961,$ 
 $149731,$ 
 $150151,$ 
 $151579,$ 
 $153271,$ 
 $154699,$ 
 $154769,$ 
 $158201,$ 
 $158269,$ 
 $158731,$ 
 $161461,$ 
 $162889,$ 
 $164429,$ 
 $166739,$ 
 $166781,$ 
 $167441,$ 
 $167621,$ 
 $169049,$ 
 $169709,$ 
 $172171,$ 
 $173909,$ 
 $174019,$ 
 $174329,$ 
 $174901,$ 
 $175561,$ 
 $176021,$ 
 $178639,$ 
 $180181,$ 
 $180949,$ 
 $181609,$ 
 $183611,$ 
 $183919,$ 
 $184211,$ 
 $188189,$ 
 $195131,$ 
 $195161,$ 
 $195229,$ 
 $195469,$ 
 $196769,$ 
 $197539,$ 
 $197779,$ 
 $199081,$ 
 $200201,$ 
 $201629,$ 
 $202201,$ 
 $203321,$ 
 $204359,$ 
 $204931,$ 
 $206051,$ 
 $206779,$ 
 $207481,$ 
 $208319,$ 
 $211639,$ 
 $213641,$ 
 $214369,$ 
 $216061,$ 
 $216371,$ 
 $217559,$ 
 $224069,$ 
 $225149,$ 
 $225499,$ 
 $231419,$ 
 $234499,$ 
 $236209,$ 
 $239539,$ 
 $242971,$ 
 $244399,$ 
 $244529,$ 
 $245519,$ 
 $251159,$ 
 $259531,$ 
 $262261,$ 
 $266111,$ 
 $268841,$ 
 $269179,$ 
 $276079,$ 
 $286859,$ 
 $298451,$ 
 $298759,$ 
 $303029,$ 
 $303689,$ 
 $304151,$ 
 $307189,$ 
 $314159,$ 
 $315589,$ 
 $316471,$ 
 $318319,$ 
 $325051,$ 
 $326369,$ 
 $328901,$ 
 $336181,$ 
 $336491,$ 
 $339151,$ 
 $340339,$ 
 $347621,$ 
 $361789,$ 
 $366211,$ 
 $366521,$ 
 $372371,$ 
 $374681,$ 
 $410411,$ 
 $415141,$ 
 $435709,$ 
 $483209,$ 
 $609179,$ 
 $614041,$ 
 $620311,$ 
 $647219,$ 
 $650761,$ 
 $690689,$ 
 $786829,$ 
 $1044889,$ 
 $1624349,$ 
 $1729001,$ 
 $3847271.$
}

\newpage 

\section{Algorithm for $q$ odd and $m=3$}\label{algo-prim-nor}
{\scriptsize
	\IncMargin{1em}
	\begin{algorithm}
		\KwIn{A prime power $q$ and a positive integer $n \ge 2$}
		\KwOut{The list of values $\beta\in \mathbb{F}_q$ for which there is no $3$-terms in arithmetic progression of common difference $\pm \beta$ with the desired property}
		$a \gets $ primitive element of $\mathbb{F}_{q^n}$\\
		$List \gets $ empty set\\
		$t \gets \sum_{i=0}^{n-1}q^i$\\
		\For{$j = 0$ \KwTo $\frac{q-1}{2}-1$\nllabel{L4}}
		{
			$\beta \gets a^{tj}$ \\
			$R \gets $false\\
			$u \gets 1$\\
			\While{$u<q^n$ and $R$ is false}
			{
				\If{$\gcd(u,q^n-1)=1$}
				{$b\gets a^u$\\
					\If{$b+\beta\neq 0$ and $b+2\beta\neq 0$\nllabel{L11}}
					{$m_1\gets$ multiplicative order of $b+\beta$\\ 
						$m_2\gets$ multiplicative order of $b+2\beta$\nllabel{L13}\\
						\If{$m_1=q^n-1$ and $m_2=q^n-1$\nllabel{L14}}
						{\If{$b$ is normal or $b+\beta$ is normal or $b+2\beta$ is normal\nllabel{L15}}{$R\gets$ true}
							\nllabel{L17}
						}
					}
				}
				$u \gets u+1$
				}
			\If{$R$ is false}{append $\beta$ to $List$}
		}
		\Return $List$
		\caption{Algorithm for $q$ odd and $m=3$}\label{alg:two}
	\end{algorithm}
}



\begin{thebibliography}{99}


\bibitem{AN}
J.J.R. Aguirre and V.G.L. Neumann,
{\em Existence of primitive $2$-normal elements in finite fields},
Finite Fields Appl. 73 (2021), 101864.
170--183.

\bibitem{Apostol} T.M. Apostol,
{\em  Introduction to analytic number theory}, Springer-Verlag, New York, Undergraduate Texts in Mathematics (1976). 


\bibitem{carlitz} L. Carlitz, {\em Primitive roots in a finite field}, Transactions of the American Math. Soc. 73 (1952), 373--382.

\bibitem{cgnt2} C.\ Carvalho, J.P.\ Guardieiro, V.G.L.\ Neumann and G.\ 
Tizziotti, {\em On the existence of pairs of primitive and normal
elements over finite fields}, Bull. Brazilian Math. Soc., New Series (2021), 1--23.

\bibitem{Cohen2015} S.D.\ Cohen, T.\ Oliveira e Silva and T.\ Trudgian,
{\em On consecutive primitive elements in a finite field}, 
Bull. London Math. Soc. 47 (2015), no. 3, 418--426.


\bibitem{Cohen1985a} S.D. Cohen, {\em Consecutive primitive roots in a 
	finite field},  Proc. Amer. Math. Soc. 93 (1985), 189-197.

\bibitem{Cohen1985b} S.D. Cohen, {\em Consecutive primitive roots in a finite 
	field. II}, Proc. Amer. Math. Soc. 94 (1985), no. 4, 605--611. 

\bibitem{Cohen1985c} S.D. Cohen, {\em Pairs of primitive roots}. Mathematika 32 (1985), 276--285.


\bibitem{CH} S.D. Cohen and S. Huczynska, {\em The primitive normal basis theorem without a computer}, J. London Math. Soc., v. 67, n. 1 (2003), 41--56.





\bibitem{Fu} L. Fu and D.Q. Wan, {\em A class of incomplete character sums}, Quart. J. Math. 65, (2014), 1195--1211.

\bibitem{galois} R. Hachenberger and D. Jungnickel, {\em Topics in Galois Fields}, Springer (2020).





\bibitem{JT} T. Jarso and T. Trudgian,
{\em Four consecutive primitive elements in a finite field},
Math. Comp. (2021).


\bibitem{Kapetanakis-Reis} G. Kapetanakis and L. Reis, {\em Variations of the Primitive
	Normal Basis Theorem},
Des. Cod. Crypt. 87 (2019), 1459--1480.

\bibitem{Kapetanakis} G. Kapetanakis, {\em Normal bases and primitive elements over finite fields}, Finite Fields App., 26, (2014), 123--143.



\bibitem{lenstra} 
H.W. Lenstra and R. Schoof, {\em Primitive normal bases for finite fields}, Math. Comp. 48 (1987), 217--231.

\bibitem{LN} R. Lidl and H. Niederreiter, {\em Finite Fields}, Cambridge University Press (1997).







\bibitem{SAGE} The Sage Developers,
SageMath, the Sage Mathematics Software System (Version 8.1),
\texttt{https://www.sagemath.org}, 2020.








\end{thebibliography}
\end{document}